\newtheorem{prop}{Proposition}[section]
\newtheorem{thm}[prop]{Theorem}
\newtheorem{cor}[prop]{Corollary}
\newtheorem{defn}[prop]{Definition}
\newtheorem{rem}[prop]{Remark}
\newtheorem{lem}[prop]{Lemma}
\newcommand{\N}{\mathbb{N}}
\newcommand{\m}{\mathfrak{m}}
\newcommand{\ia}{\mathfrak{a}}
\newcommand{\ib}{\mathfrak{b}}
\newcommand{\p}{\mathfrak{p}}
\newcommand{\q}{\mathfrak{q}}
\newcommand{\s}{\mathcal{S}}
\newcommand{\h}{\mathrm{H}}
\newcommand{\Ass}{\mathrm{Ass}}
\DeclareMathOperator{\Hom}{Hom}
\DeclareMathOperator{\Ext}{Ext}
\DeclareMathOperator{\Tor}{Tor}
\DeclareMathOperator\Att{Att}
\DeclareMathOperator\ass{Ass}
\DeclareMathOperator{\pdim}{pd}
\DeclareMathOperator{\Supp}{Supp}
\DeclareMathOperator{\cdim}{cd}
\numberwithin{equation}{section}
\begin{document}

\title{On generalized Hartshorne's conjecture and local cohomology modules}


\date{}

\author{T. H. Freitas}
\address{Universidade Tecnol\'ogica Federal do Paran\'a, 85053-525, Guarapuava-PR, Brazil}
\email{freitas.thf@gmail.com}

\author{V. H. Jorge P\'erez}
\address{Universidade de S{\~a}o Paulo -
ICMC, Caixa Postal 668, 13560-970, S{\~a}o Carlos-SP, Brazil}
\email{vhjperez@icmc.usp.br}

\author{L. C. Merighe*}
\address{Universidade de S{\~a}o Paulo -
ICMC, Caixa Postal 668, 13560-970, S{\~a}o Carlos-SP, Brazil}
\email{liliam.merighe@gmail.com}

\date{\today}
\thanks{The first and second author were  supported  CNPq-Brazil 421440/2016-3.}
\thanks{The third author was  supported by CNPq-Brazil - Grants 142376/2016-7.}
\keywords{Torsion functors, Local Cohomology, Cofinite module, Attached prime}
\thanks{* Corresponding author.}

\begin{abstract} 
Let $\mathfrak{a}$ denote an ideal of a commutative Noetherian ring $R$. Let $M$ and $N$ be  two $R$-modules. In this paper, we give some answers on the extension of Hartshorne's conjecture about the cofiniteness of torsion and extension functors. For this purpose, we study the cofiniteness  of the generalized local cohomology module $\h^{i}_{\mathfrak{a}}(M,N)$ for a new  class of modules, called $\mathfrak{a}$-weakly finite modules, in the local and non-local case. Furthermore, we derive some results on attached primes of  top generalized local cohomology modules. 
\end{abstract}

\maketitle

2010 Matahematics Subject Classificafiton: 13D45, 13H10, 13H15, 13E10

\newpage

\section{Introduction}

\hspace{0.5cm}

Throughout  this paper,  let $\ia$ and $\ib$ be ideals of a  commutative Noetherian ring with nonzero identity $R$. If $M$ and $N$ are two $R$-modules and $i\in \mathbb{Z}$, consider
$$\h^{i}_{\mathfrak{a}}(M,N):=\displaystyle \varinjlim_n{\rm Ext}_R^{i}(M/\mathfrak{a}^nM,N),$$
the $i$th generalized local cohomology module with respect to $\mathfrak{a}$ and $R$-modules $M$ and $N$, introduced by Herzog  \cite{herzog}. 
If $M=R$ the definition of generalized local cohomology modules  reduces to the notion introduced by Grothendieck of local cohomology modules $\h^{i}_{\mathfrak{a}}(N)$
\cite{grot}.

One of the interesting open questions in local cohomology theory is the following:

\

{\bf Question 1:} When is ${\rm Ext}^i_R(R/\mathfrak{a}, \h^j_{\mathfrak{a}}
(N))$ finitely generated for all integer $i$ and $j$?

\

This question was proposed by Hartshorne \cite{HA} who, in turn, was motivated by the conjecture made by Grothendieck \cite{grotpaper} about the finiteness of ${\rm Hom}_R(R/\mathfrak{a}, \h^j_{\mathfrak{a}}
(N))$. Also, Hartshorne introduced  an interesting class of modules called $\mathfrak{a}$-cofinite modules. Recall that an $R$-module $M$ is said to be $\mathfrak{a}$-cofinite if ${\rm Supp}(M)\subseteq V(\mathfrak{a})$ and ${\rm Ext}^i_R(R/\mathfrak{a}, M)$ is finitely generated for all $i$.

In this sense, as a generalization of Hartshorne's 
conjecture, we have a natural question.

\

{\bf Question 2:} When is 
 $ \h^i_{\mathfrak{a}}
(M,N)$ $\mathfrak{a}$-cofinite for all $i$?  

\

Concerning this question, several results were obtained, and in most of them the finiteness assumption on the modules is crucial in the proof (for example \cite{goto}, \cite{divsa} and \cite{tornovo}).

In this paper, we will introduce a new class of modules, called $\ia$-weakly finite modules over $M$ (see Defininiton \ref{DefWF}). This class is an extension of the class of weakly finite modules introduced in \cite{B} (which contains the class of finitely generated modules, big Cohen-Macaulay modules and $\mathfrak{a}$-cofinite modules). In the local and non-local case we will show the following two results, that improves \cite[Theorem 2.2]{fdehg} and \cite[Theorem 2.2]{DST} respectively.

\begin{thm}\label{1} 
	Suppose that $\ia \subseteq \ib$ and $\dim_R R/\ib = 0$. Let $M$ be a finitely generated $R$-module and $N$ be an $\ia$-weakly finite $R$-module over $M$. Then $\h_{\ib}^i (M, N)$ is an Artinian $R$-module and $\ia$- and $\ib$-cofinite, for all $i \in \N$.
\end{thm}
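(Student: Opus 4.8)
The plan is to first reduce the problem to the case of a local ring, and then run an induction argument.

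**Setting up the reduction and the key exact sequence.** Since $\dim_R R/\ib = 0$, the ring $R/\ib$ is Artinian, so $V(\ib)$ consists of finitely many maximal ideals. Because $\ia \subseteq \ib$, we have $V(\ib) \subseteq V(\ia)$ and the module $\h_{\ib}^i(M,N)$ is $\ib$-torsion, hence its support lies in $V(\ib)$; it is therefore a module over a semilocal ring supported only at maximal ideals. The plan is to use the standard fact that such a module is Artinian if and only if each of its localizations (equivalently, completions) at the maximal ideals in $V(\ib)$ is Artinian, and likewise $\ib$-cofiniteness can be checked after completion; moreover $\ia$-cofiniteness will follow from $\ib$-cofiniteness together with $\Rad(\ia R/\ib) = \Rad(\ib R/\ib)$ via the characterization that $M$ is $\ia$-cofinite iff it is $\ib$-cofinite when $\Rad \ia = \Rad \ib$ on the support — actually more carefully, one uses that $\ia$-cofiniteness is equivalent to $\ib$-cofiniteness for $\ib$-torsion modules when $\ia \subseteq \ib$ and $V(\ib)$ is a finite set of maximal ideals. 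So I may assume $(R, \m)$ is local, $\ib = \m$ (after passing to the quotient by a suitable power, or directly since $\sqrt{\ib}=\m$ forces $\h^i_{\ib} = \h^i_{\m}$), and I must show $\h^i_{\m}(M,N)$ is Artinian and $\m$-cofinite for a finitely generated $M$ and an $\ia$-weakly finite $N$ over $M$.

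**The induction.** I would induct on $\dim_R N$ (or on $\dim M$, or on a suitable invariant from the definition of $\ia$-weakly finite modules — whichever the definition in the cited Definition \ref{DefWF} makes available; presumably $\ia$-weak finiteness is stable under the natural short exact sequences one produces). The base case $\dim_R N = 0$: then $N$ is already $\m$-torsion-like and of finite length-type behavior, and one checks $\h^0_{\m}(M,N) = \Hom_R(M, N)$ together with the vanishing of higher $\h^i_{\m}(M,N)$, or uses that for $\dim N = 0$ the generalized local cohomology reduces to $\Ext^i_R(M, N)$, which is Artinian and $\m$-cofinite since $M$ is finitely generated and $N$ has the relevant cofiniteness. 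For the inductive step, I would pick a nonzerodivisor or a suitable element $x \in \m$ (using prime avoidance together with the structure of $\Ass$ or $\Att$ coming from the weakly finite hypothesis) to get a short exact sequence $0 \to N' \to N \to N'' \to 0$ or $0 \to N \xrightarrow{x} N \to N/xN \to 0$ with $\dim N/xN < \dim N$, apply the long exact sequence in generalized local cohomology, and use that the class of Artinian $\m$-cofinite modules is a Serre-type subcategory (closed under submodules, quotients, extensions — the cofinite part being the Hartshorne–Melkersson criterion: an $\m$-torsion module $L$ is Artinian iff $\Hom_R(R/\m, L)$ is finitely generated, and this is inherited through short exact sequences). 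The Nakayama-type argument then upgrades control of $N/xN$ to control of $N$ at the relevant cohomological degrees.

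**The main obstacle.** The delicate point is ensuring that the class of $\ia$-weakly finite $R$-modules over $M$ is preserved under the short exact sequences used in the inductive step — i.e., that if $N$ is $\ia$-weakly finite over $M$ then so are the kernel/cokernel (or $N/xN$) appearing in the reduction of dimension. If Definition \ref{DefWF} only guarantees weak finiteness "up to" certain $\Ext$ or $\Tor$ modules being finitely generated, I will need to track those error terms carefully through the long exact sequences and show they do not obstruct Artinianness or cofiniteness; this is where the finiteness of $M$ and the semilocal/Artinian nature of $R/\ib$ must be exploited to keep all auxiliary $\Ext^i_R(R/\ia, -)$ finitely generated. A secondary subtlety is the passage between $\ia$- and $\ib$-cofiniteness: one must verify that, for modules supported on the finite set $V(\ib)$ of maximal ideals, $\ia$-cofiniteness and $\ib$-cofiniteness coincide, which follows because $\Ext^i_R(R/\ia, L)$ and $\Ext^i_R(R/\ib, L)$ have the same finite-generation status when $L$ is $\ib$-torsion and $\sqrt{\ia + \Ann L} = \sqrt{\ib + \Ann L}$. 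Once these bookkeeping points are settled, the theorem follows by assembling the local pieces.
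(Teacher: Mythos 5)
Your toolkit overlaps the paper's (an $N$-regular element of $\ib$, the long exact sequence, the Serre subcategory of Artinian cofinite modules, Melkersson's criterion), but the bridge you rely on between $\ib$- and $\ia$-cofiniteness is false as stated. The claim that for an $\ib$-torsion module $L$, with $V(\ib)$ a finite set of maximal ideals and $\ia \subseteq \ib$, $\ib$-cofiniteness forces $\ia$-cofiniteness fails: take $R = k[[x,y]]$, $\ia = (x)$, $\ib = \m = (x,y)$ and $L = E_R(k)$ the injective hull of the residue field; then $L$ is Artinian, $\m$-torsion and $\m$-cofinite, but $\Hom_R(R/(x), L) \cong E_{R/(x)}(k)$ is not finitely generated, so $L$ is not $(x)$-cofinite. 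Your fallback criterion via $\Rad(\ia + \Ann L) = \Rad(\ib + \Ann L)$ is a correct sufficient condition, but you never verify it for $L = \h^i_{\ib}(M,N)$, and nothing in the hypotheses supplies it (an Artinian $\ib$-torsion module need not be annihilated by a power of $\ib$). The paper uses no such transfer: $\ia$-cofiniteness enters through property 1 of Definition \ref{DefWF}, namely that $\Hom_R(R/\ia, N)$ is finitely generated, which makes $(0 :_{\Gamma_{\ib}(N)} \ia)$ finitely generated and Artinian, hence of finite length, so Melkersson's criterion (Proposition \ref{Obs3N}) yields that $\Gamma_{\ib}(N)$ is $\ia$- as well as $\ib$-cofinite (Lemma \ref{LEMMA}); the conjunction ``Artinian and $\ia$- and $\ib$-cofinite'' is then the property carried through the induction.

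The second gap is that the steps you label ``the main obstacle'' --- closure of the class under $N \mapsto N/\Gamma_{\ib}(N)$ and $N \mapsto N/xN$, existence of a regular element, and the cofiniteness needed in the base case --- are precisely where Definition \ref{DefWF} must be invoked: property 3 gives $|\Ass_R(N)| < \infty$, hence (once $\Gamma_{\ib}(N)=0$) an $N$-regular $r \in \ib$ via Lemma \ref{Obs2N}; property 2 keeps $N/rN$ in the class; property 1 drives the base case as above. You presume these rather than prove them, so the proposal is an outline whose hypothesis-dependent core is left open. Note also that the paper needs no localization: it argues directly over the non-local ring by induction on the cohomological degree $i$ (applying the inductive hypothesis to $\h^{i-1}_{\ib}(M, N/rN)$, which lies in the class, rather than inducting on $\dim_R N$), whereas your reduction to the local case imports further unproved transfers, e.g.\ that $\ia$-weak finiteness over $M$, Artinianness and cofiniteness can all be checked after localizing or completing at the finitely many maximal ideals in $V(\ib)$.
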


\begin{thm}\label{2}
Let $(R, \m)$ be a local ring. Consider $M$, $N$ two $R$-modules such that $M$ is finitely generated  of finite projective dimension $(\pdim M = d < \infty)$, $N$ is $\ib$-weakly finite over $M$ and $\dim_R N = n < \infty$. Suppose that $\ia \subseteq \ib$. Then $\h_{\ia}^{d+n}(M, N)$ is Artinian and $\ib$-cofinite.

\noindent In particular, if $\ia = \ib$, then $\h_{\ia}^{d+n}(M, N)$ is Artinian and $\ia$-cofinite.
\end{thm}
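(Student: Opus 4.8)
The plan is to compute the top module $\h^{d+n}_{\ia}(M,N)$ by a spectral sequence argument and then transport the relevant finiteness properties across the resulting isomorphism. Since $M$ is finitely generated there is a Grothendieck spectral sequence
\[
E_2^{p,q}=\Ext^p_R\big(M,\h^q_{\ia}(N)\big)\ \Longrightarrow\ \h^{p+q}_{\ia}(M,N).
\]
Because $\pdim M=d$ we have $E_2^{p,q}=0$ for $p>d$, and by Grothendieck's vanishing theorem (valid for an arbitrary module of finite Krull dimension) $\h^q_{\ia}(N)=0$ for $q>\dim_R N=n$, so $E_2^{p,q}=0$ for $q>n$ as well. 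Hence on the antidiagonal $p+q=d+n$ only the corner $E_2^{d,n}$ can be nonzero, and no nonzero differential enters or leaves it on any page; therefore $E_\infty^{d,n}=E_2^{d,n}$ and
\[
\h^{d+n}_{\ia}(M,N)\ \cong\ \Ext^d_R\big(M,\h^n_{\ia}(N)\big).
\]

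The second step is to show that $\h^n_{\ia}(N)$ is Artinian and $\ib$-cofinite. A support computation gives $\Supp\h^n_{\ia}(N)\subseteq\{\m\}$: for a prime $\p\neq\m$ with $\p\supseteq\Ann N$ one has $\dim_{R_\p}N_\p\le n-1$ (concatenate a chain of length $\dim_{R_\p}N_\p$ below $\p$ with the step $\p\subsetneq\m$), so $\big(\h^n_{\ia}(N)\big)_\p=\h^n_{\ia R_\p}(N_\p)=0$ again by Grothendieck vanishing. Thus $\h^n_{\ia}(N)$ is an $\m$-torsion module over the local ring $R$, and it is the hypothesis that $N$ be $\ib$-weakly finite over $M$ (Definition \ref{DefWF}), together with the earlier results of this section, that upgrades this to Artinianness — exactly as in the classical fact that $\h^{\dim N}_{\ia}(N)$ is Artinian when $N$ is finitely generated (via $\m$-adic completion and Matlis duality this reduces to a statement about Noetherian modules). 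Being Artinian over $R$, the module has support in $\{\m\}\subseteq V(\ib)$, so by Melkersson's criterion it is $\ib$-cofinite as soon as $\Hom_R(R/\ib,\h^n_{\ia}(N))$ is finitely generated; this is again where $\ib$-weak finiteness is used, to bound $(0:_{\h^n_{\ia}(N)}\ib)$. (Alternatively one first proves $\ia$-cofiniteness and observes that, for an Artinian module $A$ over a local ring with $\ia\subseteq\ib$, the surjection $R/\ia\twoheadrightarrow R/\ib$ embeds $\Hom_R(R/\ib,A)$ into $\Hom_R(R/\ia,A)$, so $\ia$-cofinite $\Rightarrow$ $\ib$-cofinite.)

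Granting that $\h^n_{\ia}(N)$ is Artinian and $\ib$-cofinite, the conclusion is formal: pick a resolution $0\to F_d\to\cdots\to F_0\to M\to 0$ of $M$ by finitely generated free modules and apply $\Hom_R(-,\h^n_{\ia}(N))$; the terms of the resulting complex are finite direct sums of copies of $\h^n_{\ia}(N)$, hence Artinian and $\ib$-cofinite, and since the Artinian $\ib$-cofinite modules form a Serre subcategory of the module category (Melkersson), so is every cohomology module, in particular $\Ext^d_R(M,\h^n_{\ia}(N))\cong\h^{d+n}_{\ia}(M,N)$. The final assertion is the special case $\ia=\ib$.

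The heart of the argument — and the only place where the new hypothesis does real work — is the second step: showing that $\h^n_{\ia}(N)$ is Artinian and $\ib$-cofinite when $N$ is merely $\ib$-weakly finite over $M$, so that $\h^n_{\ia}(N)$ need not be the local cohomology of a finitely generated module; controlling $\Hom_R(R/\ib,\h^n_{\ia}(N))$ is the delicate point. It is conceivable that one instead argues directly at the level of $\h^{d+n}_{\ia}(M,N)$, or reduces to Theorem \ref{1} after a base change in which $\ib$ becomes $0$-dimensional; the spectral sequence degeneration, the support computations, and the Serre-subcategory bookkeeping are all routine.
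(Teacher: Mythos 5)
Your first and third steps are essentially sound: for finitely generated $M$ the spectral sequence $E_2^{p,q}=\Ext_R^p(M,\h_{\ia}^q(N))\Rightarrow \h_{\ia}^{p+q}(M,N)$ does exist for arbitrary $N$, the corner degeneration giving $\h_{\ia}^{d+n}(M,N)\cong\Ext_R^d(M,\h_{\ia}^n(N))$ is correct, and transporting ``Artinian and $\ib$-cofinite'' across $\Ext_R^d(M,-)$ is exactly Proposition \ref{Lema3N} combined with Lemma \ref{OBS}. The genuine gap is your second step, which you yourself flag as the delicate point: you never prove that $\h_{\ia}^n(N)$ is Artinian and $\ib$-cofinite. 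This is not a routine upgrade of the support computation. The Matlis-duality argument you allude to is only available for Noetherian $N$; Definition \ref{DefWF} gives finite generation of $\Hom_R(R/\ib,N)$ and $\Hom_R(M/\ib M,N)$, not of $\Hom_R(R/\ib,\h_{\ia}^n(N))$, so ``weak finiteness bounds $(0:_{\h_{\ia}^n(N)}\ib)$'' is an assertion, not an argument; and Theorem \ref{Teo} cannot be quoted here, since there the cohomology ideal must satisfy $\dim_R R/\ib=0$, whereas $\ia$ is arbitrary. As written, your proof never uses the weak-finiteness hypothesis in any checkable way, and the statement is false for arbitrary $N$ of finite dimension (take $N$ an infinite direct sum of copies of a module with nonvanishing top local cohomology), so the omission is fatal.

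The missing ingredient is a reduction to the zero-dimensional case, and that is precisely how the paper argues, directly at the level of $\h_{\ia}^{d+n}(M,N)$ and with no spectral sequence: if $\ia\neq\m$, pick $x\in\m\setminus\ia$; by \cite[Lemma 3.1]{divaani} there is an exact sequence $\h_{\ia+xR}^{d+n}(M,N)\to\h_{\ia}^{d+n}(M,N)\to\h_{\ia R_x}^{d+n}(M_x,N_x)$ whose last term vanishes because $\dim_{R_x}N_x<n$; iterating over generators of $\m$ modulo $\ia$ yields a surjection $\h_{\m}^{d+n}(M,N)\twoheadrightarrow\h_{\ia}^{d+n}(M,N)$. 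Theorem \ref{Teo}, applied to the pair $\ib\subseteq\m$ with $\dim_R R/\m=0$, makes the source Artinian and $\ib$-cofinite, and closure of this class under quotients (Lemma \ref{OBS}) finishes. The same surjection trick with $M=R$ is exactly what would close your step two — it is the content of Corollary \ref{CorNew}, which the paper deduces from the theorem rather than conversely — but once you have it, the spectral sequence becomes superfluous.
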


\

A non-zero $R$-module $L$ is called \textit{secondary} if its multiplication map by any element $x$ of $R$ is either surjective or nilpotent. A \textit{secondary representation} for a $R$-module $L$ is an finite expression $L= L_1+L_2+ \ldots+L_s$,  where  $L_i$ is secondary for $1\leq i\leq s$. We will say that $L$ is \textit{representable} if there exist such an expression. 

Furthermore, a prime ideal $\mathfrak{p}$ of $R$ is said to be an {\it attached prime} of $L$, if $\mathfrak{p}=(K:_RL)$ for some submodule $K$ of $L$. Denote  by ${\rm Att}_R(L)$ the set of attached prime ideals of the $R$-module $L$. If $L$ admits a secondary representation $L= L_1+L_2+ \ldots+L_s$, then ${\rm Att}_R(L)$ is exactly the set $\{ \sqrt{(0:_RL_i)} \mid  1\leq i\leq s \}$. We can say that such representation is reduced if $\sqrt{(0:_RL_i)}$, $i=1, \ldots, s$, are all distinct.  For more details, the reader can see  \cite{macd}.

Another aim of the present paper is to show some results concerning attached primes of generalized local cohomology modules $\h^i_{\mathfrak{a}}(M,N)$. More precisely,
we will show an extension of the main result in \cite{DiYa}.

\begin{thm}\label{3}
Let $M$ be a finitely generated $R$-module such that $\pdim M = d < \infty$. Let $N$ be an $R$-module such that $\dim_R N = \dim_R R = n$. Then $\h_{\ia}^{d+n}(M,N)$ has secondary representation and
$$\Att_R (\h_{\ia}^{d+n}(M,N)) \subseteq \{ \p \in \ass_R (N) \mid \cdim (\ia, M, R/\p) = d+n \}.$$
\end{thm}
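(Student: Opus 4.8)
The plan is to reduce the statement to the known case of ordinary top local cohomology of a finitely generated module, and then transport the conclusion to $N$ by a base-change argument; we may assume $\h^{d+n}_\ia(M,N)\neq 0$, since otherwise both assertions are trivial. First I would record that $\h^i_\ia(M,X)=0$ for every $R$-module $X$ and every $i>d+n$: this follows from the spectral sequence $\Ext^p_R(M,\h^q_\ia(X))\Rightarrow\h^{p+q}_\ia(M,X)$ together with $\Ext^p_R(M,-)=0$ for $p>\pdim M=d$ and $\h^q_\ia(X)=0$ for $q>\dim R=n$ (Grothendieck vanishing, valid for arbitrary modules by passing to finitely generated submodules). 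Hence $\h^{d+n}_\ia(M,-)$ is right exact, and since it also commutes with arbitrary direct sums (being built from $\Ext$'s of finitely generated modules and a direct limit), there is a functorial isomorphism $\h^{d+n}_\ia(M,-)\cong L\otimes_R(-)$, where $L:=\h^{d+n}_\ia(M,R)$. In particular $\h^{d+n}_\ia(M,N)\cong L\otimes_R N$ and $\h^{d+n}_\ia(M,R/\p)\cong L\otimes_R R/\p$ for every $\p\in\Spec R$.

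Next I would identify $L$. From the spectral sequence $\h^p_\ia(\Ext^q_R(M,R))\Rightarrow\h^{p+q}_\ia(M,R)$, the term $(p,q)=(n,d)$ lies in the corner of the vanishing region ($\h^p_\ia(-)=0$ for $p>n$ and $\Ext^q_R(M,R)=0$ for $q>d$), so it carries no nonzero differential and survives, giving $L\cong\h^n_\ia(E)$ with $E:=\Ext^d_R(M,R)$ finitely generated (equivalently, the spectral sequence used above gives $L\cong\Ext^d_R(M,\h^n_\ia(R))$). If $\dim_R E<n$ then $L=0$, contrary to our assumption, so $\dim_R E=n=\dim R$; applying the main result of \cite{DiYa} to the finitely generated module $E$ shows that $L=\h^n_\ia(E)$ has a secondary representation and $\Att_R L\subseteq\{\p\in\ass_R E\mid\cdim(\ia,R/\p)=n\}$. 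Since $\ass_R E\subseteq\Supp_R E\subseteq\Supp_R M$, every $\p\in\Att_R L$ lies in $\Supp_R M$ and satisfies $\cdim(\ia,R/\p)=n$; as $\cdim(\ia,R/\p)\le\dim R/\p\le\dim R=n$, this forces $\dim R/\p=n$, so $\p$ is a minimal prime of $R$.

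Finally I would transport this to $N$. Fix a reduced secondary representation $L=L_1+\cdots+L_s$ with $L_i$ being $\p_i$-secondary. For each $i$, multiplication by an element of $\p_i$ is nilpotent on $L_i\otimes_R N$ (since $\p_i^mL_i=0$ for some $m$) and multiplication by an element outside $\p_i$ is surjective on it, so $L_i\otimes_R N$ is $\p_i$-secondary or $0$; as the surjection $\bigoplus_iL_i\twoheadrightarrow L$ yields $L\otimes_R N=\sum_i\operatorname{im}(L_i\otimes_R N\to L\otimes_R N)$, the module $\h^{d+n}_\ia(M,N)\cong L\otimes_R N$ is representable, with $\Att_R(L\otimes_R N)\subseteq\{\p_1,\dots,\p_s\}=\Att_R L$. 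If $\p_i\in\Att_R(L\otimes_R N)$, then $L_i\otimes_R N\neq0$; a nonzero $\p_i$-secondary module has support $V(\p_i)$, so localizing at $\p_i$ forces $N_{\p_i}\neq0$, and since $\p_i$ is a minimal prime of $R$ (so $R_{\p_i}$ is zero-dimensional local) this gives $\p_i\in\ass_R N$. For the remaining equality, let $\p=\p_i\in\Att_R\h^{d+n}_\ia(M,N)\subseteq\Att_R L$; the surjection $L\twoheadrightarrow K:=L/\sum_{j\neq i}L_j$ has target a nonzero $\p$-secondary module $K$ (nonzero because $\p_i\in\Att_R L$), and since $\p^mK=0$ for some $m$ (the nilradical of $R/\Ann_R K$ is $\p/\Ann_R K$, hence nilpotent) the equality $K=\p K$ would force $K=0$; thus $K/\p K\neq0$, and tensoring $L\twoheadrightarrow K$ with $R/\p$ gives $\h^{d+n}_\ia(M,R/\p)\cong L\otimes_R R/\p\neq0$. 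Hence $\cdim(\ia,M,R/\p)\ge d+n$, and combined with the vanishing from the first paragraph, $\cdim(\ia,M,R/\p)=d+n$, which finishes the proof.

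The main obstacle is the last paragraph: since $N$ is not assumed finitely generated, neither representability of $\h^{d+n}_\ia(M,N)$ nor control of its attached primes can be obtained directly, and everything must be routed through the identity $\h^{d+n}_\ia(M,-)\cong L\otimes_R(-)$, using that attached primes can only shrink under $-\otimes_R N$ and that the surviving primes are minimal in $\Spec R$; this minimality is precisely what allows the passage from $\Supp_R N$ to $\ass_R N$ and what forces $L\otimes_R R/\p\neq0$. Setting up the two spectral sequences and checking that the stated corner terms survive is routine but needs care.
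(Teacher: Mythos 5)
Your route is genuinely different from the paper's. The paper never identifies the top functor: it writes $N$ as a direct limit of finitely generated submodules, uses a Bourbaki-type submodule of $N$ splitting $\Ass_R(N)$ according to $\cdim(\ia,M,R/\p)$, kills the top cohomology of that submodule using the finitely generated case \cite{yanchu}, controls attached primes by multiplication with elements outside $\bigcup_{\p\in\Ass_R(N)}\p$, and obtains representability by an induction on $|\Ass_R(N)|$ (its Proposition on secondary representations). You instead prove the vanishing $\h^i_{\ia}(M,X)=0$ for all $X$ and $i>d+n$, deduce via an Eilenberg--Watts argument that $\h^{d+n}_{\ia}(M,-)\cong L\otimes_R(-)$ with $L=\h^{d+n}_{\ia}(M,R)$, identify $L\cong\h^n_{\ia}(\Ext^d_R(M,R))$ by a spectral-sequence corner argument, apply \cite{DiYa} to the finitely generated module $\Ext^d_R(M,R)$, and transport along $-\otimes_R N$. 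This is an attractive functorial reduction; its price is the two spectral sequences (for $\h^p_{\ia}(\Ext^q_R(M,R))\Rightarrow\h^{p+q}_{\ia}(M,R)$ you need the identification of $\h^{\bullet}_{\ia}(M,R)$ with the hypercohomology of $\Gamma_{\ia}$ applied to $\Hom_R(F_{\bullet},R)$, which is standard for finitely generated arguments but must be cited or proved) and the Watts-type theorem, none of which the paper uses.

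There is one incorrect step: the claim that a nonzero $\p_i$-secondary module has support equal to $V(\p_i)$ is false --- for instance $\Z_{p^{\infty}}$ is a $(0)$-secondary $\Z$-module whose support is $\{(p)\}$ --- so you cannot get $N_{\p_i}\neq 0$ by localizing $L_i\otimes_R N$. The conclusion you need is nevertheless true and has a one-line proof: if $N_{\p_i}=0$, then every $n\in N$ is annihilated by some $s\notin\p_i$; since $L_i$ is $\p_i$-secondary, $sL_i=L_i$, so every generator $t\otimes n=st'\otimes n=t'\otimes sn$ of $L_i\otimes_R N$ vanishes, i.e.\ $L_i\otimes_R N=0$, contradicting $\p_i\in\Att_R(L\otimes_R N)$; then $N_{\p_i}\neq 0$ together with the minimality of $\p_i$ in $\Spec R$ gives $\p_i\in\Ass_R(N)$ as you say. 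With this repair, and taking a \emph{minimal} (not merely reduced) secondary representation of $L$, so that $\{\p_1,\dots,\p_s\}=\Att_R(L)$ and $L\neq\sum_{j\neq i}L_j$ whenever $\p_i\in\Att_R(L)$, the rest of your argument --- right exactness, $\Att_R(L\otimes_R N)\subseteq\Att_R(L)$, the fact that each attached prime of $L$ is a minimal prime with $\dim R/\p=n$, and the nonvanishing of $L\otimes_R R/\p$ via $K/\p K\neq 0$ --- is correct and yields the theorem.
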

\noindent Remerber that $\cdim (\ia, M, N) = \sup \{ i \mid \h^i_{\ia}(M,N) \neq 0\}$ for any $M$ and $N$ $R$-modules.

Returning to Question 1, Melkersson \cite[Theorem 2.1]{M1} has shown that ${\rm Ext}^j_R(R/\mathfrak{a}, M)$ is finitely generated for all $j \in \N$ if and only if ${\rm Tor}_j^R(R/\mathfrak{a}, M)$ is finitely generated for all $j \in \N$. We generalize this result on Theorem \ref{teo1} for a Serre subcategory of the category of $R$-modules.

We can analyze Question 1 from a more general point of view.

\

\noindent \textbf{Question 3:} When are ${\rm Ext}_R^i(M,N)$ and ${\rm Tor}^R_i(M,N)$  $\mathfrak{a}$-cofinite (or finite length, or $\mathfrak{a}$-cominimax, or $\mathfrak{a}$-weakly cofinite)   for all (or  for some) integer $i$? 

\

Note that $\mathfrak{a}$-cominimax \cite{azami} and $\mathfrak{a}$-weakly cofinite modules \cite{divmafi2} 
are generalizations of $\mathfrak{a}$-cofinite modules (see definitions on Section 4). 
We may also ask, as a particular case of Question 3, when $ \Ext^j_R\left(L,\h^i_{\mathfrak{a}}
(M,N)\right)$ and  
 $ \Tor_j^R\left(L,\h^i_{\mathfrak{a}}
(M,N)\right)$ are $\mathfrak{a}$-cofinite (or finite length, or $\mathfrak{a}$-cominimax, or $\mathfrak{a}$-weakly cofinite)  for all (or for some) integers $i$ and $j$, and some $R$-module $L$.  In this sense, we show the following result.

\begin{thm}
Let $(R, \m)$ be a commutative Noetherian local ring. Let $H$ and $M$ be $R$-modules such that $H$ is Artinian and $\mathfrak{a}$-cofinite and $M$ is minimax. Then, for each $i \geq 0$, the module $\Ext_R^i(M,H)$ is minimax and $\widehat{\mathfrak{a}}$-cofinite over $\widehat{R}$. 
\end{thm}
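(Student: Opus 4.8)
The plan is to reduce the problem about $\Ext_R^i(M,H)$ to the well-understood case where $M$ is finitely generated, using the structure of minimax modules, and then to identify the relevant ring as the completion $\widehat R$ by exploiting the Artinian and $\mathfrak{a}$-cofinite hypotheses on $H$. First I would recall that a minimax module $M$ sits in a short exact sequence $0 \to M' \to M \to M'' \to 0$ in which $M'$ is finitely generated and $M''$ is Artinian. Applying the long exact sequence of $\Ext_R^\bullet(-,H)$, it suffices to prove the claim separately for finitely generated $M$ and for Artinian $M$, since the class of minimax modules is closed under submodules, quotients and extensions, and I expect the same to be true for the class of $\widehat{\mathfrak{a}}$-cofinite $\widehat R$-modules (this closure should follow from the earlier material, or from a standard Melkersson-type criterion).

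For the case $M$ finitely generated, the key input is that $H$ is Artinian and $\mathfrak{a}$-cofinite over the local ring $(R,\m)$; by a theorem of Melkersson this forces $H$ to have a natural $\widehat R$-module structure on which $\Supp_{\widehat R} H \subseteq V(\widehat{\mathfrak a})$, and moreover $H$ is $\widehat{\mathfrak a}$-cofinite over $\widehat R$. Then $\Ext_R^i(M,H) \cong \Ext_{\widehat R}^i(\widehat M, H)$ because $H$ is $\m$-torsion (every element is annihilated by a power of $\m$) so the $R$-module and $\widehat R$-module Ext agree, and $\widehat M$ is a finitely generated $\widehat R$-module. Now I invoke the known cofiniteness result: over a complete (hence in particular "nice" — e.g.\ a homomorphic image of a regular local ring, or simply dimension-one / Artinian support) situation, if $Y$ is $\mathfrak b$-cofinite and $X$ is finitely generated then $\Ext^i(X,Y)$ is again $\mathfrak b$-cofinite, and it is minimax. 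Here the support of $H$ lies in $V(\widehat{\mathfrak a})$ and $H$ is Artinian, so $\Ext_{\widehat R}^i(\widehat M,H)$ is an Artinian $\widehat{\mathfrak a}$-cofinite $\widehat R$-module, in particular minimax; pulling back along $R \to \widehat R$ it is still minimax as an $R$-module.

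For the case $M$ Artinian, the module $M$ is again $\m$-torsion, so I may likewise pass to $\widehat R$ and regard $M$ as an Artinian $\widehat R$-module; by Matlis duality over the complete local ring $\widehat R$, $M = D(M^\vee)$ where $M^\vee$ is a finitely generated $\widehat R$-module, and $\Ext_{\widehat R}^i(M,H)$ can be rewritten via the duality $\Ext_{\widehat R}^i(D(N), H) \cong \Ext_{\widehat R}^i(D(H), N)$-type formulas (or directly: $\Ext^i_{\widehat R}(M,H)$ with both $M$ Artinian $\widehat{\mathfrak a}$-cofinite — note $M$ minimax with $\Supp M\subseteq V(\m)\subseteq V(\mathfrak a)$ makes it $\mathfrak a$-cofinite — reduces to a Hom/Ext computation between finitely generated modules after dualizing). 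The upshot is again that $\Ext_R^i(M,H)$ is minimax and $\widehat{\mathfrak a}$-cofinite.

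The main obstacle I anticipate is the bookkeeping at the two reduction steps: making precise that $\Ext$ over $R$ and over $\widehat R$ coincide when the second argument (and, in the Artinian case, also the first) is $\m$-torsion, and verifying that "minimax" and "$\widehat{\mathfrak a}$-cofinite over $\widehat R$" are both closed under the three-term extensions coming from the minimax filtration of $M$ — the latter really needs a Melkersson-style cofiniteness criterion (finiteness of $\Ext^i(R/\mathfrak a,-)$ in low degrees implying cofiniteness) applied in the complete ring, together with the fact that $H$ being Artinian bounds the dimension of everything in sight. Once those closure properties are in hand, the finitely generated case is essentially the classical cofiniteness theorem and the Artinian case is Matlis duality, so the theorem follows by splicing the long exact $\Ext$ sequence.
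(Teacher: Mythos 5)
Your overall architecture coincides with the paper's: write the minimax module $M$ in a short exact sequence $0\to M'\to M\to M''\to 0$ with $M'$ finitely generated and $M''$ Artinian, apply the long exact sequence of $\Ext_R^{\bullet}(-,H)$, handle the two pieces separately, and conclude because the minimax $\widehat{\mathfrak{a}}$-cofinite $\widehat{R}$-modules form a Serre subcategory (\cite[Corollary 4.4]{M1}). For the Artinian piece the paper simply cites \cite[Corollary 2.3]{kubik}, which says that $\Ext_R^i(M'',H)$ is Noetherian over $\widehat{R}$; that result is proved precisely by the Matlis-duality reduction you sketch, so your treatment of that case is the same argument written out rather than cited.

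Two of your justifications, however, do not stand as written. First, the ``known cofiniteness result'' you invoke for the finitely generated piece --- that over a complete local ring $\Ext^i(X,Y)$ is $\mathfrak{b}$-cofinite whenever $X$ is finitely generated and $Y$ is $\mathfrak{b}$-cofinite --- is not a theorem in that generality; it is essentially the open question this circle of papers is about, and is known only in special situations (e.g.\ when the $\mathfrak{b}$-cofinite modules form an abelian category). What actually saves the step, and what the paper uses, is that $H$ is Artinian: by Lemma \ref{OBS} the Artinian $\mathfrak{a}$-cofinite modules form a Serre subcategory, and by Proposition \ref{lema1} the module $\Ext_R^i(M',H)$, being a subquotient of finitely many copies of $H$ computed from a finite free resolution of $M'$, lies in that subcategory; it is then Artinian and $\widehat{\mathfrak{a}}$-cofinite over $\widehat{R}$, with no appeal to any general cofiniteness theorem. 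Second, your parenthetical claim that the Artinian quotient $M''$ is automatically $\mathfrak{a}$-cofinite because $\Supp_R M''\subseteq V(\mathfrak{m})\subseteq V(\mathfrak{a})$ is false: for example $E(R/\mathfrak{m})$ has support $\{\mathfrak{m}\}$, yet $(0:_{E(R/\mathfrak{m})}\mathfrak{a})$ has finite length only when $\dim_R R/\mathfrak{a}=0$, so $E(R/\mathfrak{m})$ is not $\mathfrak{a}$-cofinite in general. Since that remark is only offered as an alternative to the Matlis-duality route it does not sink the proof, but it should be removed; with these two repairs your argument is the paper's proof.
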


One of the main goals of this paper is to generalize the results showed by  Naghipour, Bahmanpour and Khalili  \cite{arttor}. In this direction we show the following theorem:

\begin{thm} Let $N$ be a nonzero $\mathfrak{a}$-cominimax $R$-module and $M$ be a finitely generated $R$-module.
\begin{itemize}
\item[(i)] If $\dim_R M=1$, then the $R$-module ${\rm Tor}^R_i(M,N)$ is Artinian and $\mathfrak{a}$-cofinite for all $i\geq 0$. 

\item[(ii)] If $\dim_R M=2$, then the $R$-module ${\rm Tor}^R_i(M,N)$ is $\mathfrak{a}$-cofinite for all $i\geq 0$. 
\end{itemize}
\end{thm}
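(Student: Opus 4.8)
The statement to prove concerns $\operatorname{Tor}^R_i(M,N)$ for $N$ an $\mathfrak{a}$-cominimax module and $M$ finitely generated of small dimension; the plan is to mirror the inductive strategy used by Naghipour--Bahmanpour--Khalili in \cite{arttor}, reducing step by step to the cases $M=R$ and $M=R/\mathfrak{p}$. First I would record the elementary reductions: since $M$ is finitely generated, a prime filtration $0=M_0\subset M_1\subset\cdots\subset M_t=M$ with $M_j/M_{j-1}\cong R/\mathfrak{p}_j$ gives, via the long exact sequences of $\operatorname{Tor}(-,N)$, that it suffices to prove the assertions for each $M=R/\mathfrak{p}$ with $\dim R/\mathfrak{p}\le \dim_R M$; note every relevant property (Artinian, $\mathfrak{a}$-cofinite, $\mathfrak{a}$-cominimax) is stable under subquotients and extensions, which keeps the induction clean. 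The case $\mathfrak{p}=\mathfrak{m}$, i.e. $M$ of finite length, is immediate because $\operatorname{Tor}^R_i(M,N)$ then has support in $\{\mathfrak{m}\}$ and is computed from a finite free resolution, so is a subquotient of finitely many copies of $N$ tensored down; one then invokes that a cominimax module killed by a power of $\mathfrak{m}$ (or rather the relevant finiteness) yields $\mathfrak{a}$-cofiniteness. More precisely I would use the fact that over $R/\mathfrak{a}$-based Ext/Tor, $\operatorname{Ext}_R^j(R/\mathfrak{a},-)$ and $\operatorname{Tor}_j^R(R/\mathfrak{a},-)$ finiteness are equivalent (Melkersson, cited as \cite{M1} in the excerpt, and generalized in the paper's Theorem~\ref{teo1}), which lets me convert cominimaxness hypotheses on $N$ into control of $\operatorname{Tor}_j^R(R/\mathfrak{a},N)$.

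\smallskip
For part (i), with $\dim_R M=1$, after the filtration reduction I am left with $M=R/\mathfrak{p}$ where $\dim R/\mathfrak{p}\le 1$. When $\dim R/\mathfrak{p}=0$ this is the finite-length case above. When $\dim R/\mathfrak{p}=1$, choose $x\in\mathfrak{m}\setminus\mathfrak{p}$; then $R/(\mathfrak{p}+xR)$ has finite length and the short exact sequence $0\to R/\mathfrak{p}\xrightarrow{x} R/\mathfrak{p}\to R/(\mathfrak{p}+xR)\to 0$ produces, for each $i$, an exact sequence
\[
\operatorname{Tor}^R_{i+1}(R/(\mathfrak{p}+xR),N)\longrightarrow \operatorname{Tor}^R_i(R/\mathfrak{p},N)\xrightarrow{\ x\ }\operatorname{Tor}^R_i(R/\mathfrak{p},N)\longrightarrow \operatorname{Tor}^R_i(R/(\mathfrak{p}+xR),N).
\]
The outer terms are the finite-length case, hence Artinian and $\mathfrak{a}$-cofinite. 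Thus $T_i:=\operatorname{Tor}^R_i(R/\mathfrak{p},N)$ has $0:_{T_i}x$ and $T_i/xT_i$ both Artinian and $\mathfrak{a}$-cofinite, and $T_i$ is $\mathfrak{a}$-torsion (its support lies in $V(\mathfrak{p}+\mathfrak{a})$, since $N$ is $\mathfrak{a}$-torsion up to the cominimax hypothesis — here I would first reduce to the $\mathfrak{a}$-torsion case using that $\mathfrak{a}$-cominimax modules have $\operatorname{Supp}\subseteq V(\mathfrak{a})$). Melkersson's criterion (an $\mathfrak{a}$-torsion module $T$ with $0:_T x$ and $T/xT$ Artinian is itself Artinian, for $x\in\mathfrak{a}$, and similarly for $\mathfrak{a}$-cofiniteness via \cite[Prop.~4.1 or similar]{M1}-type arguments) then forces $T_i$ Artinian and $\mathfrak{a}$-cofinite. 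The element $x$ need not lie in $\mathfrak{a}$, though, which is the one delicate point: I would instead argue that $T_i$ is already known to be $\mathfrak{a}$-cofinite-adjacent and apply the characterization that a module of dimension $\le 1$ which is $\mathfrak{a}$-torsion with finitely generated $\operatorname{Hom}_R(R/\mathfrak{a},-)$ is $\mathfrak{a}$-cofinite — this is Bahmanpour--Naghipour's dimension-one cofiniteness theorem, which I would cite and which is the real engine of part (i).

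\smallskip
For part (ii), with $\dim_R M=2$, I reduce similarly to $M=R/\mathfrak{p}$ with $\dim R/\mathfrak{p}=2$, pick $x\in\mathfrak{m}\setminus\mathfrak{p}$ so that $\dim R/(\mathfrak{p}+xR)\le 1$, and feed the short exact sequence into $\operatorname{Tor}(-,N)$. Now the outer terms are $\operatorname{Tor}^R_\bullet(R/(\mathfrak{p}+xR),N)$, which by part (i) are $\mathfrak{a}$-cofinite (not necessarily Artinian). So $T_i=\operatorname{Tor}^R_i(R/\mathfrak{p},N)$ is an $\mathfrak{a}$-torsion module with $0:_{T_i}x$ and $T_i/xT_i$ both $\mathfrak{a}$-cofinite, and $\dim T_i\le 2$. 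The conclusion that $T_i$ is then $\mathfrak{a}$-cofinite is exactly the dimension-two cofiniteness result of Bahmanpour (an $\mathfrak{a}$-cofiniteness criterion for $\mathfrak{a}$-torsion modules $T$ of dimension $\le 2$ in terms of $0:_T x$ and $T/xT$), which I would cite explicitly; dropping Artinianness in the statement is forced because dimension-two torsion modules need not be Artinian.

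\smallskip
\textbf{Main obstacle.} The bookkeeping around $x\notin\mathfrak{a}$ is the crux: Melkersson's clean "$0:_Tx$ and $T/xT$ Artinian $\Rightarrow T$ Artinian" requires $x\in\mathfrak{a}$, so for the general prime $\mathfrak{p}$ I cannot choose such an $x$ inside $\mathfrak{a}$ unless $\mathfrak{a}\not\subseteq\mathfrak{p}$. The fix is to split on whether $\mathfrak{a}\subseteq\mathfrak{p}$: if $\mathfrak{a}\subseteq\mathfrak{p}$ then $R/\mathfrak{p}$ is annihilated by $\mathfrak{a}$ and one works directly; if $\mathfrak{a}\not\subseteq\mathfrak{p}$ then $T_i$ is annihilated by a power of $\mathfrak{a}+\mathfrak{p}$ and one can localize/shift to reduce the effective dimension — in effect $\operatorname{Supp} T_i\subseteq V(\mathfrak{a})\cap V(\mathfrak{p})$ has smaller dimension and the induction hypothesis already applies. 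Getting this case analysis to interlock correctly with the filtration induction, so that the dimension bound is genuinely decreasing, is the step I expect to require the most care; everything else is a formal consequence of the long exact sequences and the cited low-dimensional cofiniteness theorems.
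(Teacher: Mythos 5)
Your reduction to $M=R/\p$ with the auxiliary element $x\in\m\setminus\p$ leaves precisely the case you yourself flag, $\ia\subseteq\p$, open, and none of your proposed patches closes it. The dimension-one criterion of Bahmanpour--Naghipour (and any dimension-two analogue) requires $\Hom_R(R/\ia,T_i)$ and $\Ext^1_R(R/\ia,T_i)$ to be \emph{finitely generated}; but the hypothesis here is only that $N$ is $\ia$-cominimax, so when $\Supp_R(R/\p)\subseteq V(\ia)$ all that the equivalence result (Theorem \ref{teo1}, applied to the Serre class of minimax modules) gives is that $\Tor^R_i(R/\p,N)$ and its $\ia$-socle are \emph{minimax}, and ``one works directly'' is not an argument. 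Moreover those criteria would only yield cofiniteness, whereas part (i) also asserts Artinianness, which your argument never produces once $x\notin\ia$: Melkersson's criteria (Proposition \ref{Obs3N}, and the Artinianness test via $(0:_Tx)$) genuinely need the element inside $\ia$. The sub-case $\ia\not\subseteq\p$ is harmless --- there you may simply choose $x\in\ia\setminus\p$ and the Melkersson machinery works --- but the case $\ia\subseteq\p$ cannot be rescued by any citation: with $\ia=\p$, $M=N=R/\p$ and $\dim_R R/\p=1$, the module $N$ is $\ia$-cominimax (even $\ia$-cofinite) yet $\Tor^R_0(M,N)=R/\p$ is not Artinian, so the full conclusion simply is not available from cominimaxness on that piece.

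The paper's proof takes a different and, at the crucial point, simpler route: there is no prime filtration. One first splits off $\Gamma_{\ia}(M)$ --- exactly the part of $M$ supported on primes containing $\ia$ --- and disposes of $\Tor^R_i(\Gamma_{\ia}(M),N)$ through Theorem \ref{teo1} (minimaxness) inside the reduction; then, assuming $\Gamma_{\ia}(M)=0$, Lemma \ref{Obs2N} supplies $x\in\ia$ outside every associated prime of $M$, so that $M/xM$ has finite length (in part (i)) or dimension at most one (in part (ii)), Lemma \ref{finitelength} (respectively part (i)) controls the outer Tor terms of the long exact sequence, and since now $x\in\ia$, the Melkersson results give finite length of $(0:_{\Tor^R_i(M,N)}\ia)$, hence Artinianness and cofiniteness, in one stroke. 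The idea missing from your proposal is exactly this: kill $\Gamma_{\ia}(M)$ first so that the auxiliary element can be taken inside $\ia$, rather than filtering $M$ by primes and then fighting the primes that contain $\ia$. (Be aware that the troublesome regime you identified is also the delicate spot of the paper's own reduction, which handles the $\Gamma_{\ia}(M)$ contribution only up to minimaxness; but as written your attempt does not prove the statement even in the cases where the paper's argument is complete, since Artinianness in part (i) is never established.)
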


For larger dimensions we obtain the following result.

\begin{thm} Let $(R,\mathfrak{m})$ be a local ring and $\mathfrak{a}$ an $R$-ideal. Let $N$ be a nonzero $\mathfrak{a}$-cominimax $R$-module and $M$ be a finitely generated $R$-module. 
 Then the $R$-module ${\rm Tor}^R_i(M,N)$ is $\mathfrak{a}$-weakly cofinite for all $i\geq 0$ when one of the following cases holds:
 \begin{itemize}
 \item[(i)] $\dim_R N\leq 2$.
 
 \item[(ii)] $\dim_R M=3$.
 \end{itemize} 
\end{thm}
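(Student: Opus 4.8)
Plan of proof. It suffices to prove that $\Tor_i^R(M,N)$ is $\ia$-weakly cofinite for every $i\ge 0$. Two observations are in force throughout. First, $\Supp_R \Tor_i^R(M,N)\subseteq \Supp_R M\cap\Supp_R N\subseteq V(\ia)$ and $\dim_R\Tor_i^R(M,N)\le\min\{\dim_R M,\dim_R N\}$, so in case (i) every $\Tor_i^R(M,N)$ is an $\ia$-torsion module of dimension $\le 2$, and in case (ii) of dimension $\le 3$; in particular, when $\dim_R\Tor_i^R(M,N)=0$ the module $\Tor_i^R(M,N)$ is $\m$-torsion, hence weakly Laskerian, hence $\ia$-weakly cofinite, and there is nothing to prove. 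Second, $N$ is $\ia$-weakly cofinite because it is $\ia$-cominimax, and by Theorem~\ref{teo1} the modules $\Tor_q^R(R/\ia,N)$ are minimax for all $q$. The guiding idea is that, choosing a resolution $P_\bullet\to M$ by finitely generated free $R$-modules, one has $\Tor_i^R(M,N)=H_i(P_\bullet\otimes_R N)$, which is a subquotient of $P_i\otimes_R N\cong N^{(b_i)}$; so $\Tor_i^R(M,N)$ would be $\ia$-weakly cofinite as soon as the relevant class were closed under subquotients. This fails in general, but holds inside the bounded–dimension subcategories of $\ia$-weakly cofinite modules, and the dimension hypotheses are precisely what keep us there.

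Case (i): $\dim_R N\le 2$. Here I would invoke that the category $\mathcal C_2$ of $\ia$-weakly cofinite $R$-modules of dimension $\le 2$ is a Serre subcategory of the category of $R$-modules: for a module $T$ supported in $V(\ia)$ with $\dim_R T\le 2$, being $\ia$-weakly cofinite is detected by the weak Laskerian-ness of $\Hom_R(R/\ia,T)$ and $\Ext_R^1(R/\ia,T)$, and this description makes $\mathcal C_2$ closed under submodules, quotients and extensions. Since $N\in\mathcal C_2$, each term $P_i\otimes_R N\cong N^{(b_i)}$ of the complex $P_\bullet\otimes_R N$ lies in $\mathcal C_2$, and therefore so does its $i$th homology $\Tor_i^R(M,N)$ (which indeed has dimension $\le 2$). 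Hence $\Tor_i^R(M,N)$ is $\ia$-weakly cofinite for every $i$.

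Case (ii): $\dim_R M=3$. Now $N$ need no longer have bounded dimension, so I would instead perform a dévissage in $M$. A prime filtration of $M$ has cyclic quotients $R/\p_k$ with $\dim R/\p_k\le\dim_R M=3$, and, using the long exact sequence of $\Tor_\bullet^R(-,N)$ together with the Serre property of the $\ia$-weakly cofinite modules of dimension $\le 3$, it suffices to treat $M=R/\p$ with $\dim R/\p\le 3$. If $\dim R/\p\le 2$, the previous theorem (the $\dim_R M\le 2$ case) gives that $\Tor_i^R(R/\p,N)$ is $\ia$-cofinite, a fortiori $\ia$-weakly cofinite. If $\dim R/\p=3$, then $\p\ne\m$, so I can pick $x\in\m\setminus\p$, choosing $x\in\ia$ whenever $\ia\not\subseteq\p$; then $x$ is a nonzerodivisor on $R/\p$ and $\dim R/(\p+xR)\le 2$, so applying the previous theorem to $R/(\p+xR)$ and reading the long exact sequence of $\Tor_\bullet^R(-,N)$ attached to $0\to R/\p\xrightarrow{\,x\,}R/\p\to R/(\p+xR)\to 0$ shows that $\Tor_i^R(R/\p,N)/x\,\Tor_i^R(R/\p,N)$ and $(0:_{\Tor_i^R(R/\p,N)}x)$ are $\ia$-weakly cofinite. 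Since $\Tor_i^R(R/\p,N)$ is $\ia$-torsion and has dimension $\le 3$, a Melkersson-type argument — controlling the ascending filtration by the submodules $(0:_{\Tor_i^R(R/\p,N)}x^n)$ and using $x\in\ia$ — then promotes this to $\ia$-weak cofiniteness of $\Tor_i^R(R/\p,N)$ itself. In the residual case $\ia\subseteq\p$ with $\dim R/\p=3$ (where $x$ cannot be taken in $\ia$) I would first peel off $\Gamma_\m(N)$, which is Artinian because $\ia$-cominimaxness forces $\Hom_R(R/\m,N)$ to be finite dimensional, so that the corresponding $\Tor$-pieces are $\m$-torsion; this reduces to $N$ with $\Gamma_\m(N)=0$ and $\Ass_R N$ finite, and one finishes as in the proof of the preceding theorem.

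Main obstacle. The delicate point is that $R$ is not assumed regular, so $M$ and $R/\ia$ can have infinite projective dimension and one cannot compare $\Ext_R^\bullet\big(R/\ia,\Tor_\bullet^R(M,N)\big)$ with $\Ext_R^\bullet(R/\ia,N)$ through a globally convergent spectral sequence; the argument must stay inside the Serre subcategories of $\ia$-weakly cofinite modules of dimension $\le 2$ (respectively $\le 3$), which is exactly why the statement is confined to $\dim_R N\le 2$ and $\dim_R M\le 3$. Within case (ii) the genuinely hard step is the passage from ``$\Tor_i^R(R/\p,N)/x\,\Tor_i^R(R/\p,N)$ and $(0:x)$ are $\ia$-weakly cofinite'' to ``$\Tor_i^R(R/\p,N)$ is $\ia$-weakly cofinite'': this is not formal for an arbitrary Serre subcategory — it already fails for the subcategory of finitely generated modules, as Prüfer-type modules show — and one must exploit both that $x$ lies in $\ia$ and that the $\Tor$-modules are $\ia$-torsion, together with the dimension bound, which limits the number of $\Ext_R^j(R/\ia,-)$ that have to be controlled.
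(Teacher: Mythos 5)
Your plan does not close the two gaps on which it visibly rests, and both are at least as deep as the theorem itself. In case (i) everything hinges on the assertion that the class of $\ia$-weakly cofinite modules of dimension at most $2$ is a Serre subcategory, detected by weak Laskerianness of $\Hom_R(R/\ia,T)$ and $\Ext^1_R(R/\ia,T)$ alone. You give no argument for this, and it is doubtful as stated: even for ordinary cofiniteness the dimension-two criterion over a local ring requires $\Ext^2_R(R/\ia,-)$ in addition to $\Hom$ and $\Ext^1$ (only dimension $\le 1$ is detected by the first two), and cofiniteness-type classes are not known to be closed under arbitrary submodules -- the long exact sequence for a submodule $K\subseteq T$ entangles $\Ext^j(R/\ia,K)$ with $\Ext^{j-1}(R/\ia,T/K)$, so subquotient-closure is exactly the obstruction, not a formal property you may invoke. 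The same unproved Serre-closure (now in dimension $\le 3$) underlies the d\'evissage in case (ii), and there the further ``Melkersson-type'' promotion from ``$(0:_{\Tor_i^R(R/\p,N)}x)$ and $\Tor_i^R(R/\p,N)/x\Tor_i^R(R/\p,N)$ are $\ia$-weakly cofinite'' to ``$\Tor_i^R(R/\p,N)$ is $\ia$-weakly cofinite'' is stated but not supplied; Melkersson's Corollary 3.4 is proved for finite generation and needs $x\in\ia$, so it is unavailable precisely in your residual configuration $\ia\subseteq\p$, $\dim R/\p=3$, which you dispose of with ``one finishes as in the preceding theorem'' -- but there $\dim_RN$ is unbounded, so the preceding arguments do not apply. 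As it stands, neither case is proved.

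For comparison, the paper's proof avoids any closure property of weakly cofinite classes. Using Theorem \ref{teo1} (Remark \ref{rem1}) it reduces the claim to showing that every quotient $K/K'$ of every module $K=\Tor_j^R(R/\ia,\Tor_i^R(M,N))$ has finitely many associated primes. Assuming not, one may pass to the completion, pick countably many non-maximal primes $\{\p_s\}\subseteq\Ass_R(K/K')$, use countable prime avoidance (Marley--Vassilev) to get $\m\not\subseteq\bigcup_s\p_s$, and localize at $S=R\setminus\bigcup_s\p_s$. This drops the relevant dimension by one, so the already established low-dimensional results apply to $S^{-1}M$, $S^{-1}N$: Proposition \ref{dimM} (via Irani's abelianness of cominimax modules of dimension $\le 1$) in case (i), and Lemma \ref{finitelength} together with Theorem \ref{cases} in case (ii). Hence $S^{-1}K/S^{-1}K'$ is minimax, so it has finitely many associated primes, contradicting the fact that all the $S^{-1}\p_s$ are associated to it. If you want to salvage your outline, you would first have to actually prove the Serre/abelianness and Melkersson-condition statements for weakly cofinite modules of dimension $\le 2$ and $\le 3$ that you are assuming; the paper's localization-plus-countable-prime-avoidance argument is the way it sidesteps exactly these missing ingredients.
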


This paper is organized as follows. In Section 2, we discuss Question 2, define the class of $\ia$-weakly finite modules over $M$ and prove some results with the purpose to show Theorem \ref{1} and Theorem \ref{2}. 

In Section 3, we obtain some results on attached primes of generalized local cohomology in order to  prove our second main result (Theorem \ref{3}). 

In order to understand the cofiniteness behavior of extension and torsion functors,  in Section 4 we show some preliminary results that will be useful for this purpose (being the Theorem \ref{teo1} the most important of them).  Also, we recall the definitions of minimax, weakly Laskerian modules, $\ia$-cominimax and $\ia$-weakly cofinite modules. These notions improve the definitions of Noetherian (and Artinian) modules and $\ia$-cofinite modules. 

In last section, we show some cases where ${\rm Tor}^R_i(M,N)$ is $\ia$-cofinite or $\ia$-weakly cofinite module for all $i$. As applications of the results shown in the Section 2, we study the behavior of $ \Ext^j_R\left(L,\h^i_{\mathfrak{a}}
(M,N)\right)$ and $ \Tor_j^R\left(L,\h^i_{\mathfrak{a}}
(M,N)\right)$, for some $R$-module $L$.

For conventions of notation,  basic results, and terminology  not given in this paper, the reader should consult the books Brodmann-Sharp \cite{grot} and Matsumura  \cite{Mats}.

\section{Cofiniteness and Artinianness of Generalized Local Cohomology Modules}

In this section, we fix our notation and list several results for the convenience of the reader.
Let $R$ be a commutative Noetherian ring. Throughout this paper, unless otherwise noted, the $R$-module  $N$ is not necessarily finitely generated and we denote the $R$-dimension of $N$ by $\dim_R N := \sup \{ \dim_R R/\p \mid \p \in \Supp_R (N) \}$.

\begin{defn} \label{DefWF}
Let $M$ be an $R$-module, and let $\ia$ be an ideal of $R$. Let $\mathcal{W}$ be the largest class of $R$-modules, i.e., the union of all such classes, satisfying the following four properties:
\begin{itemize}
\item[1.] If $N \in \mathcal{W}$, then $\Hom_R (R/\ia, N)$ and $\Hom_R (M/\ia M, N)$ are finitely generated.

\item[2.] If $N$ is a non-zero element of $\mathcal{W}$ and $x$ is a regular element of $R$, then $N \neq xN$, $N/xN \in \mathcal{W}$ and $\dim_R N/xN = \dim_R N - 1$.

\item[3.] If $N \in \mathcal{W}$, then $|\Ass_R (N)|<\infty$.

\item[4.] If $N \in \mathcal{W}$, then $N/\Gamma_{\ia}(N) \in \mathcal{W}$.
\end{itemize}
	We say an $R$-module $N$ is $\ia$-weakly finite over $M$, if it belongs to $\mathcal{W}$.

If $M=R$, we say that the $R$-module is $\ia$-weakly finite. If $M=R$, $(R,\m)$ is a local ring and $\ia = \m$ we just say that the $R$-module is weakly finite (see \cite[Definition 2.1]{B}).
\end{defn}

Note that if $M$ is a finitely generated $R$-module, then $M$ is a weakly finite $R$-module. Another important class of modules is the class of $\ia$-cofinite modules. Recall that  a module $N$ is said to be $\mathfrak{a}$-cofinite \cite{HA} if ${\rm Supp}_R(N) \subseteq  V(\mathfrak{a})$ and $\Ext^i_R(R/\mathfrak{a},N)$ is a finitely generated module for all $i \in \N_0$. With this notion, Bagheri (\cite[Lemma 2.1]{B}) has shown, in the local case, that the class of $\ia$-cofinite modules is contained in the class of weakly finite modules. Our next proposition shows a similar behaviour for $\mathfrak{a}$-weakly finite modules, and the proof is analogous to what was done in \cite[Lemma 2.1]{B}.



\begin{prop} 
Let $(R, \mathfrak{m})$ be a local ring and let $M$ be a non-zero $R$-module of dimension $n > 0$. If  $M$ is $\ia$-cofinite, then $M$ is $\ia$-weakly finite.
\end{prop}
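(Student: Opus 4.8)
The plan is to verify directly that the class of $\ia$-cofinite $R$-modules satisfies the four defining properties of the class $\mathcal{W}$ with $M = R$, so that by maximality of $\mathcal{W}$ every $\ia$-cofinite module lies in $\mathcal{W}$ and is therefore $\ia$-weakly finite. Since the hypothesis is $M$ is $\ia$-cofinite of dimension $n>0$ over the local ring $(R,\m)$, and we only need the conclusion for $M=R$, the four properties to check are: (1) $\Hom_R(R/\ia, M)$ is finitely generated (taking both $M/\ia M$ and $R/\ia$ to be the same module $R/\ia$); (2) for a regular element $x$ of $R$, $M \neq xM$, $M/xM$ is $\ia$-cofinite, and $\dim_R M/xM = \dim_R M - 1$; (3) $|\Ass_R(M)| < \infty$; (4) $M/\Gamma_\ia(M)$ is $\ia$-cofinite. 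Since the proof is declared to be analogous to \cite[Lemma 2.1]{B}, I would follow that template closely.

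For property (1), $\Hom_R(R/\ia, M)$ is exactly $\Ext^0_R(R/\ia, M)$, which is finitely generated by the definition of $\ia$-cofiniteness. For property (3), since $\Supp_R(M) \subseteq V(\ia)$ and $M$ is $\ia$-cofinite, a standard fact (e.g. via \cite{M1} or the theory of cofinite modules) gives that $\Ass_R(M) = \Ass_R(\Hom_R(R/\ia, M))$ is finite because $\Hom_R(R/\ia,M)$ is finitely generated over the Noetherian ring $R$. For property (4), $\Gamma_\ia(M) = M$ whenever $\Supp_R(M) \subseteq V(\ia)$, so $M/\Gamma_\ia(M) = 0$, which is trivially $\ia$-cofinite; this case is immediate.

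The main work is property (2). Given a regular element $x \in R$: first, $M \neq xM$ — here one uses that $M$ is $\ia$-cofinite and nonzero with $\dim_R M = n > 0$, hence $M$ has a nonzero finitely generated ``approximation'' (its Ext/Hom modules carry the support), and Nakayama-type reasoning on the finitely generated module $\Hom_R(R/\ia, M)$ (or on a suitable finitely generated submodule) together with $x \in \m$ forces $xM \neq M$; the positivity of the dimension is what rules out $M$ being, say, a module on which $x$ acts invertibly. Next, $M/xM$ is $\ia$-cofinite: from the short exact sequence $0 \to M \xrightarrow{x} M \to M/xM \to 0$ one gets the long exact sequence in $\Ext_R(R/\ia, -)$, and since $\Ext^i_R(R/\ia,M)$ is finitely generated for all $i$, each $\Ext^i_R(R/\ia, M/xM)$ sits in an exact sequence between two finitely generated modules and is hence finitely generated; also $\Supp_R(M/xM) \subseteq \Supp_R(M) \subseteq V(\ia)$. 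Finally, $\dim_R M/xM = \dim_R M - 1$: the inequality $\le$ follows since $x$ is regular (it is not in any associated prime, in particular not in any prime of maximal dimension in the support), and the reverse is the standard dimension-drop estimate. The subtle point I expect to be the main obstacle is establishing $M \neq xM$ and the exact dimension equality for a module that need not be finitely generated; one must leverage $\ia$-cofiniteness to transfer the question to the finitely generated modules $\Ext^i_R(R/\ia, M)$ and then invoke Melkersson's characterization of Artinian or cofinite modules to control the dimension, exactly as in \cite{B}.
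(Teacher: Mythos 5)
Your overall strategy --- exhibit the class of $\ia$-cofinite modules as a class satisfying the four axioms of Definition \ref{DefWF} (with $M=R$) and invoke the maximality of $\mathcal{W}$ --- is the route the paper points to (it gives no written proof, deferring to \cite[Lemma 2.1]{B}), and your verifications of properties (1), (3) and (4) are fine. The genuine gap is in property (2), which is where all the content lies. The exact sequence $0 \to M \xrightarrow{\;x\;} M \to M/xM \to 0$ that you use to get cofiniteness of $M/xM$ is exact only when $x$ is regular \emph{on $M$}; an $R$-regular element need not be $M$-regular --- indeed, since an $\ia$-cofinite module is $\ia$-torsion, no $R$-regular $x \in \ia$ is ever $M$-regular, and under the literal ``regular element of $R$'' reading the conclusions themselves fail: for $R = k[[s,t]]$, $\ia = (s)$, $M = \h^1_{(s)}(R) \cong R_s/R$ (an $\ia$-cofinite module of dimension $1$) and $x = s$ one has $sM = M$, so neither $M \neq xM$ nor the dimension drop holds. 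So axiom (2) must be read, as in \cite{B} and as it is actually used in the proof of Theorem \ref{Teo} (where it is applied to an $N$-regular element $r \in \ib$), with $x \in \m$ an $M$-regular element; your proposal never draws this distinction, and without it the argument cannot close.

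Even with the correct reading, the two key assertions are only gestured at. Nakayama cannot be applied to $M$ itself (it is not finitely generated), and surjectivity of $x$ on $M$ does not pass to $\Hom_R(R/\ia,M)$ by left-exactness alone; the actual argument uses bijectivity of $x$ on $M$ to show $x\,(0:_M\ia) = (0:_M\ia)$ and then applies Nakayama to the nonzero finitely generated module $(0:_M\ia)$. Likewise your treatment of $\dim_R M/xM = \dim_R M - 1$ has the easy and hard inequalities interchanged and proves neither for a non-finitely-generated module: the inequality $\leq$ comes from the fact that the minimal primes of $\Supp_R(M)$ lie in $\Ass_R(M)$ and are therefore avoided by the $M$-regular element $x$, while the inequality $\geq$ requires producing a prime $\q$ of coheight $\dim_R M - 1$ with $(M/xM)_{\q} \neq 0$, which again needs a localized cofiniteness-plus-Nakayama argument for $M_{\q}$ over $R_{\q}$, not a ``standard dimension-drop estimate.'' Finally, since you verify the axioms for the whole class of $\ia$-cofinite modules but invoke $\dim_R M > 0$ only for your particular $M$, you should explain why zero-dimensional cofinite members (which inevitably arise as quotients $N/xN$) cause no trouble: with the correct reading they are $\m$-torsion and admit no $M$-regular element of $\m$, so axiom (2) is vacuous for them --- a point absent from the proposal.
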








Also, Bagheri \cite[Theorem 2.1]{B} has shown in the local case  that the local cohomology module $\h_{\m}^{i}(N)$ is Artinian when $N$ is a weakly finite $R$-module.  Our main goal in this section is to generalize this result in the non-local case (Theorem \ref{Teo}).
To do this, we will need some previous results.

We call a Serre subcategory of the category of $R$-modules a class of $R$-modules closed under taking submodules, quotients and extensions (see more details and examples of this definition in Section 4).

\begin{lem} [{\cite[Corollary 4.4]{M1}}]\label{OBS}
The class of Artinian $\ia$-cofinite is a Serre subcategory of the category of $R$-modules.
\end{lem}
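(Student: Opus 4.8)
The plan is to reduce the statement to Melkersson's characterization of Artinian cofinite modules, which I would recall (or re-derive) at the outset: for an Artinian $R$-module $L$ with $\Supp_R L \subseteq V(\ia)$, the module $L$ is $\ia$-cofinite if and only if $\Hom_R(R/\ia, L)\cong(0:_L\ia)$ is a finitely generated $R$-module (equivalently, of finite length). The nontrivial implication here — that, for Artinian $L$, finite generation of $\Hom_R(R/\ia, L)$ already forces finite generation of every $\Ext^i_R(R/\ia, L)$ — is the only real content of the lemma, and it is precisely where the Artinian hypothesis is genuinely used; it is exactly \cite[Corollary 4.4]{M1} together with the results preceding it.

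Granting this criterion, let $\mathcal{S}$ denote the class of Artinian $\ia$-cofinite $R$-modules and take a short exact sequence $0 \to N' \to N \to N'' \to 0$ of $R$-modules. The Artinian property passes to submodules, quotients and extensions, and the condition $\Supp_R(-)\subseteq V(\ia)$ does too, because $\Supp_R N = \Supp_R N' \cup \Supp_R N''$. Hence, by the criterion, it suffices to control finite generation of $\Hom_R(R/\ia,-)$ in the three relevant cases, which I would do by a short diagram chase using left-exactness of $\Hom_R(R/\ia,-)$ and Noetherianness of $R$.

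For submodules, if $N \in \mathcal{S}$ and $N' \subseteq N$, then $\Hom_R(R/\ia, N')$ is a submodule of the finitely generated module $\Hom_R(R/\ia, N)$, hence finitely generated. For extensions, if $N', N'' \in \mathcal{S}$, the sequence $0 \to \Hom_R(R/\ia,N') \to \Hom_R(R/\ia,N) \to \Hom_R(R/\ia,N'')$ exhibits $\Hom_R(R/\ia,N)$ as an extension of a submodule of a finitely generated module by a finitely generated module. For quotients, if $N \in \mathcal{S}$ and $N'' = N/N'$, the exact sequence $\Hom_R(R/\ia,N) \to \Hom_R(R/\ia,N'') \to \Ext^1_R(R/\ia,N')$, combined with the already-established submodule case (which gives $N'\in\mathcal{S}$, so that $\Ext^1_R(R/\ia,N')$ is finitely generated by $\ia$-cofiniteness of $N'$), shows that $\Hom_R(R/\ia,N'')$ is finitely generated. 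Thus $N',N,N''$ all lie in $\mathcal{S}$ as appropriate, so $\mathcal{S}$ is a Serre subcategory.

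I expect the only real obstacle to be the characterization quoted in the first paragraph; once it is available, the Serre subcategory property is formal. It is worth noting that, without the restriction to Artinian modules, this scheme breaks down precisely at the quotient step (one can no longer conclude $\Ext^1_R(R/\ia,N')$ is finitely generated), which reflects the well-known fact that the class of all $\ia$-cofinite modules is not a Serre subcategory in general.
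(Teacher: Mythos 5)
Your proposal is correct: the paper itself offers no argument for this lemma, quoting it directly as \cite[Corollary 4.4]{M1}, and your derivation — reducing everything to Melkersson's criterion that a module $L$ with $\Supp_R L\subseteq V(\ia)$ is Artinian and $\ia$-cofinite exactly when $(0:_L\ia)$ has finite length (recorded in the paper as Proposition \ref{Obs3N}), then checking submodules, extensions and quotients by left-exactness of $\Hom_R(R/\ia,-)$ and the $\Ext^1$ term — is essentially how the cited result is proved in \cite{M1}. The only nitpick is your closing aside: for the class of all $\ia$-cofinite modules the scheme does not fail only at the quotient step, since the submodule step equally depends on the Artinian criterion, but this does not affect the proof of the lemma itself.
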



\begin{prop}\label{lema1} \label{Lema3N}
	Let $\mathcal{S}$ denote a Serre subcategory of the category of $R$-module. Let $M$ be a finitely generated $R$-module and $N\in \mathcal{S}$ any $R$-module. Then for all $i\geq 0$, ${\rm Ext}_R^i(M,N)$ and ${\rm Tor}_i^R(M,N)$ are elements of $\mathcal{S}$.
\end{prop}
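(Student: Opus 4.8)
The plan is to prove the statement by taking a free resolution of the finitely generated module $M$ and chasing the Serre subcategory property through the resulting complexes. First I would choose a resolution $\cdots \to F_2 \to F_1 \to F_0 \to M \to 0$ in which each $F_i$ is a \emph{finitely generated} free $R$-module; this is possible because $R$ is Noetherian and $M$ is finitely generated. Applying $\Hom_R(-,N)$ to $F_\bullet$ and applying $-\otimes_R N$ to $F_\bullet$ gives cochain/chain complexes whose terms are, respectively, $\Hom_R(F_i,N)\cong N^{\oplus r_i}$ and $F_i\otimes_R N\cong N^{\oplus r_i}$, finite direct sums of copies of $N$, where $r_i$ is the rank of $F_i$.

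The key observation is that a Serre subcategory $\mathcal{S}$ is closed under finite direct sums: this follows from closure under extensions (a finite direct sum is built by iterated extensions $0\to N\to N\oplus N'\to N'\to 0$). Hence each term $N^{\oplus r_i}$ lies in $\mathcal{S}$ whenever $N\in\mathcal{S}$. Then $\Ext_R^i(M,N)$ is, by definition, a subquotient of $\Hom_R(F_i,N)$ — namely $\ker(d^i)/\operatorname{im}(d^{i-1})$ — and likewise $\Tor_i^R(M,N)$ is a subquotient of $F_i\otimes_R N$. Since $\mathcal{S}$ is closed under submodules and quotients, any subquotient of a module in $\mathcal{S}$ is again in $\mathcal{S}$; therefore $\Ext_R^i(M,N)\in\mathcal{S}$ and $\Tor_i^R(M,N)\in\mathcal{S}$ for all $i\geq 0$.

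There is essentially no hard obstacle here; the statement is a standard homological-algebra fact about Serre subcategories, and the only points requiring a word of care are (i) that the free resolution can be taken degreewise finitely generated (Noetherianness of $R$ together with finite generation of $M$), so that the terms $\Hom_R(F_i,N)$ and $F_i\otimes_R N$ are genuinely finite direct sums of copies of $N$ rather than arbitrary products or infinite sums, and (ii) recording explicitly that $\mathcal{S}$ is closed under finite direct sums and hence under subquotients. I would state these two facts as the two bullet points of the argument and then conclude immediately. (Independence of the choice of resolution is automatic since $\Ext$ and $\Tor$ are well defined, so no further argument is needed.)
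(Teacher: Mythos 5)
Your argument is correct and is exactly the standard computation the paper has in mind when it says the result ``follows from the definition of extension and torsion functors'': resolve $M$ by finitely generated free modules, note that the terms of the resulting complexes are finite direct sums of copies of $N$ (hence in $\mathcal{S}$ by closure under extensions), and conclude since $\Ext_R^i(M,N)$ and $\Tor_i^R(M,N)$ are subquotients. Your write-up simply makes explicit the details the paper leaves to the reader.
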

\begin{proof}
	The proof follows from the definition of extension and torsion functors.
\end{proof}

\begin{lem} [{\cite[Lemma 2.1(i)]{DST}}] \label{Rem2.6N}
Let $M$ and $N$ $R$-modules such that $M$ is a finitely generated, then
	$$\h_{\ia}^i (M, \Gamma_{\ia}(N)) \cong \Ext_R^i (M, \Gamma_{\ia}(N)), \quad \mbox{ for all } i \in \N_0.$$
\end{lem}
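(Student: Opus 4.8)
The plan is to realize both functors $\h_{\ia}^i(M,-)$ and $\Ext_R^i(M,-)$ as right derived functors of one and the same functor on the abelian subcategory $\mathcal{T}$ of $\ia$-torsion $R$-modules, and then to invoke uniqueness of derived functors. Two elementary facts feed into this. First, since $M$ is finitely generated, for every $\ia$-torsion module $T$ one has $\h_{\ia}^0(M,T)=\varinjlim_n \Hom_R(M/\ia^n M,T)=\Hom_R(M,\Gamma_{\ia}(T))=\Hom_R(M,T)$, because any homomorphism $M\to T$ has finitely generated, hence $\ia^m$-annihilated, image. Second, for any injective $R$-module $E$ and any $i\geq 1$, $\h_{\ia}^i(M,E)=\varinjlim_n\Ext_R^i(M/\ia^n M,E)=0$, since $\Ext_R^{\geq 1}(-,E)=0$.

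Next I would use that $\mathcal{T}$ has enough injectives, with injective objects exactly the modules $\Gamma_{\ia}(E)$ for $E$ injective over $R$ (the functor $\Gamma_{\ia}$ being right adjoint to the inclusion $\mathcal{T}\hookrightarrow$ (category of $R$-modules), and right adjoints preserving injectives); and here the Noetherian hypothesis on $R$ enters, guaranteeing that such a $\Gamma_{\ia}(E)$ is again injective \emph{as an $R$-module}. Fix an injective resolution $0\to\Gamma_{\ia}(N)\to J^0\to J^1\to\cdots$ of $\Gamma_{\ia}(N)$ inside $\mathcal{T}$, so each $J^k=\Gamma_{\ia}(E^k)$ is $\ia$-torsion and injective over $R$. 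On one hand $\Ext_R^i(M,\Gamma_{\ia}(N))=H^i\big(\Hom_R(M,J^\bullet)\big)$, since $J^\bullet$ is a genuine injective resolution over $R$. On the other hand, by the two facts above the complex $J^\bullet$ is a $\h_{\ia}(M,-)$-acyclic resolution of $\Gamma_{\ia}(N)$ with $\h_{\ia}^0(M,J^k)=\Hom_R(M,J^k)$; since $\h_{\ia}^\bullet(M,-)$ is a cohomological $\delta$-functor in the second variable (apply the $\Ext_R^\bullet(M/\ia^n M,-)$ long exact sequence and pass to the exact direct limit), the acyclic resolution theorem gives $\h_{\ia}^i(M,\Gamma_{\ia}(N))=H^i\big(\Hom_R(M,J^\bullet)\big)$ as well. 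Comparing the two computations yields the stated isomorphism, and one checks it is the canonical map induced by the surjections $M\twoheadrightarrow M/\ia^n M$.

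The one genuinely delicate ingredient is the input that over a Noetherian ring $\Gamma_{\ia}(E)$ remains injective for $E$ injective; without it the resolution $J^\bullet$ could not serve both functors at once. The rest is bookkeeping: the $\delta$-functoriality of $\h_{\ia}^\bullet(M,-)$, and the identification $\h_{\ia}^0(M,T)\cong\Hom_R(M,T)$ for $\ia$-torsion $T$. If one prefers to avoid quoting the acyclic resolution theorem, the same conclusion follows by induction on $i$: for $\ia$-torsion $T$ embed $T\hookrightarrow\Gamma_{\ia}(E)$ with quotient $T'$ again $\ia$-torsion, use the natural transformation $\h_{\ia}^\bullet(M,-)\to\Ext_R^\bullet(M,-)$ together with the long exact sequences of $0\to T\to\Gamma_{\ia}(E)\to T'\to 0$ and the vanishing of $\h_{\ia}^i(M,\Gamma_{\ia}(E))$ and $\Ext_R^i(M,\Gamma_{\ia}(E))$ for $i\geq 1$, and dimension-shift, the base case $i=0$ being the identification above.
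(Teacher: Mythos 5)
The paper does not prove this lemma at all: it is quoted verbatim from Divaani-Aazar--Sazeedeh \cite[Lemma 2.1(i)]{DST}, so there is no internal proof to compare against. Your argument is a correct, self-contained proof of the cited fact, and it is essentially the standard one: resolve $\Gamma_{\ia}(N)$ inside the category of $\ia$-torsion modules by modules $\Gamma_{\ia}(E)$, use the Noetherian hypothesis to know these are injective over $R$ (so the same resolution computes $\Ext_R^i(M,-)$), and use that $\h_{\ia}^0(M,T)\cong\Hom_R(M,T)$ for torsion $T$ with $M$ finitely generated together with the vanishing of $\h_{\ia}^{\geq 1}(M,-)$ on injectives. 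You correctly isolate the two delicate points (injectivity of $\Gamma_{\ia}(E)$, and the $\delta$-functor structure of $\h_{\ia}^\bullet(M,-)$ obtained by passing the $\Ext_R^\bullet(M/\ia^nM,-)$ long exact sequences through the exact direct limit), and your closing dimension-shifting variant, via the natural comparison map $\h_{\ia}^i(M,T)\to\Ext_R^i(M,T)$ and the short exact sequences $0\to T\to\Gamma_{\ia}(E)\to T'\to 0$, removes any dependence on quoting an acyclic-resolution theorem for abstract $\delta$-functors. No gaps.
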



\begin{lem} [{\cite[Lemma 2.1.1]{grot}}]\label{Obs2N}
	If $T$ is an $R$-module such that $|\Ass_R(T)|<\infty$, then $T$ is $\ia$-torsion free if and only if there is an element $r \in \ia$ such that $r$ is $T$-regular.
\end{lem}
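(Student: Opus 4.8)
The plan is to reduce the statement to two standard facts about a (not necessarily finitely generated) module $T$ over the Noetherian ring $R$: first, that the set of zerodivisors on $T$ equals $\bigcup_{\p \in \Ass_R(T)} \p$; and second, that $\Ass_R(\Gamma_\ia(T)) = \Ass_R(T) \cap V(\ia)$, so that $T$ is $\ia$-torsion free (that is, $\Gamma_\ia(T) = 0$) if and only if $\ia \not\subseteq \p$ for every $\p \in \Ass_R(T)$. Both facts hold over an arbitrary Noetherian ring, because every nonzero $R$-module has nonempty $\Ass_R$ and the maximal elements of $\{\Ann_R(t) \mid 0 \neq t \in T\}$ are prime; I would only recall these one-line verifications if a referee asks for them.

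For the direction ($\Leftarrow$), assume there is $r \in \ia$ that is $T$-regular (a non-zerodivisor on $T$). Then, by the first fact, $r \notin \p$ for every $\p \in \Ass_R(T)$, hence $\ia \not\subseteq \p$ for every such $\p$, and the second fact gives $\Gamma_\ia(T) = 0$. Note that this implication does not use the hypothesis $|\Ass_R(T)| < \infty$.

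For the direction ($\Rightarrow$), assume $T$ is $\ia$-torsion free. The second fact yields $\ia \not\subseteq \p$ for every $\p \in \Ass_R(T)$. Since $\Ass_R(T)$ is a \emph{finite} set of prime ideals, the prime avoidance lemma applies and produces an element $r \in \ia$ with $r \notin \bigcup_{\p \in \Ass_R(T)} \p$; by the first fact such an $r$ is a non-zerodivisor on $T$, i.e.\ $T$-regular, which is exactly what we want.

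I do not expect a genuine obstacle here. The one point worth flagging is the precise role of the finiteness hypothesis: it is used exactly once, to invoke prime avoidance in the ($\Rightarrow$) direction, and it cannot be omitted in general, since an ideal may be contained in the union of an infinite family of prime ideals without being contained in any single one of them.
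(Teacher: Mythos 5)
Your proof is correct, and since the paper gives no argument of its own here (it simply cites Brodmann--Sharp, Lemma 2.1.1), your reduction to the two standard facts plus prime avoidance is exactly the classical argument behind the cited result, adapted correctly to the weaker hypothesis $|\Ass_R(T)|<\infty$ in place of finite generation. Your remark pinpointing that finiteness of $\Ass_R(T)$ is used only for prime avoidance in the forward direction is accurate and is precisely why the paper states the lemma in this form.
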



\begin{prop} [{\cite[Proposition 4.1]{M1}}] \label{Obs3N}
	Let $M$ be an $R$-module with support in $V(\ia)$. Then $M$ is Artinian and $\ia$-cofinite if and only if $(0:_{M} \ia)$ has finite length. If there is an element $x \in \ia$ such that $(0:_{M} x)$ is Artinian and $\ia$-cofinite, then $M$ is Artinian and $\ia$-cofinite. 
\end{prop}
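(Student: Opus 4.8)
The plan is to isolate the single substantive implication and dispatch everything else by formal nonsense. First, the ``only if'' half of the equivalence is immediate: if $M$ is Artinian and $\ia$-cofinite, then $(0:_M\ia)=\Hom_R(R/\ia,M)$ is finitely generated (by $\ia$-cofiniteness) and Artinian (being a submodule of the Artinian module $M$), hence of finite length, since a module that is simultaneously Noetherian and Artinian over a Noetherian ring has a composition series. Next I would observe that the second assertion follows from the ``if'' half: if $x\in\ia$ and $(0:_M x)$ is Artinian and $\ia$-cofinite, then $\Supp_R(0:_M x)\subseteq V(\ia)$ and the ``only if'' direction applied to $(0:_M x)$ gives that $(0:_{(0:_M x)}\ia)=(0:_M\ia)$ has finite length; so everything reduces to proving the implication: if $(0:_M\ia)$ has finite length, then $M$ is Artinian and $\ia$-cofinite.

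To prove this implication, I would first reduce to the local case. Since $\Supp_R M\subseteq V(\ia)$, every $\p\in\Ass_R M$ contains $\ia$, hence $R/\p$ (killed by $\ia$) embeds into $(0:_M\ia)$; thus $\Ass_R M\subseteq\Ass_R(0:_M\ia)$, which is a finite set of maximal ideals because $(0:_M\ia)$ has finite length. A short localization argument then upgrades this to $\Supp_R M=\{\m_1,\dots,\m_k\}$, a finite set of maximal ideals, so that $M=\bigoplus_{j=1}^k\Gamma_{\m_j}(M)$. As Artinianness and $\ia$-cofiniteness can be checked summand by summand, and $\Gamma_{\m_j}(M)$ is an $R_{\m_j}$-module that is $\m_j$-torsion with $(0:_{\Gamma_{\m_j}(M)}\m_j)\subseteq(0:_M\ia)$ of finite length, we are reduced to the situation where $(R,\m)$ is local, $M$ is $\m$-torsion, and $(0:_M\m)$ is a finite-dimensional $R/\m$-vector space. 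In this situation $M$ is Artinian by the classical socle criterion: an $\m$-torsion module over a Noetherian local ring with finitely generated socle is Artinian.

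It remains to prove $\ia$-cofiniteness in this reduced local setting, where I would invoke Matlis duality. Writing $(-)^\vee=\Hom_R(-,E)$ with $E=E_R(R/\m)$, a standard argument shows $M^\vee$ is a finitely generated $\widehat R$-module (one checks that $M^\vee$ is $\m$-adically complete and that $M^\vee/\m M^\vee\cong(0:_M\m)^\vee$ is finite-dimensional, then applies Nakayama's lemma over $\widehat R$). Since $M$ is Artinian it is Matlis reflexive, so for each $i\ge 0$ there is an isomorphism $\Ext_R^i(R/\ia,M)\cong\Tor^R_i(R/\ia,M^\vee)^\vee$. Now $(0:_M\ia)\cong(M^\vee/\ia M^\vee)^\vee$ has finite length, so $M^\vee/\ia M^\vee$ has finite length over $\widehat R$, which forces $\Supp_{\widehat R}M^\vee\cap V(\ia\widehat R)\subseteq\{\m\widehat R\}$. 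Consequently each $\Tor^R_i(R/\ia,M^\vee)\cong\Tor^{\widehat R}_i(\widehat R/\ia\widehat R,M^\vee)$ is a finitely generated $\widehat R$-module supported only at $\m\widehat R$, hence of finite length; therefore its Matlis dual $\Ext_R^i(R/\ia,M)$ has finite length, in particular is finitely generated, so $M$ is $\ia$-cofinite.

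The step I expect to be the main obstacle is not establishing that $M$ is Artinian---that is the well-known socle criterion---but rather keeping the finite-length hypothesis on $(0:_M\ia)$ effective throughout the reduction so that it yields finite \emph{generation}, and not merely Artinianness, of the modules $\Ext_R^i(R/\ia,M)$: it is precisely the support computation $\Supp_{\widehat R}M^\vee\cap V(\ia\widehat R)\subseteq\{\m\widehat R\}$ that upgrades ``Artinian Matlis dual'' to ``finite-length Matlis dual''. One could alternatively avoid Matlis duality and argue by induction on the number of generators of $\ia$, peeling off one element $x$ at a time and playing the short exact sequences $0\to(0:_M x)\to M\xrightarrow{x}xM\to0$ and $0\to xM\to M\to M/xM\to0$ against the Serre property of Lemma \ref{OBS}, but the bookkeeping there is more delicate and I would prefer the duality route.
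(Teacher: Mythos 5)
The paper does not prove this proposition at all: it is quoted directly from Melkersson [M1, Proposition 4.1], so there is no internal argument to compare yours with; what matters is whether your blind proof stands on its own, and it does. The logical skeleton is right: the ``only if'' half and the reduction of the second assertion to the ``if'' half (via $(0:_{(0:_Mx)}\ia)=(0:_M\ia)$, using $x\in\ia$ and $\Supp_R(0:_Mx)\subseteq V(\ia)$) are correct, and the main implication is handled soundly. In the reduction you use, correctly, that every $\p\in\Ass_R M$ contains $\ia$ and embeds a copy of $R/\p$ into $(0:_M\ia)$, hence $\Ass_R M$ is a finite set of maximal ideals, that minimal primes of $\Supp_R M$ are associated primes, and that a module supported at finitely many maximal ideals decomposes as $\bigoplus_j\Gamma_{\m_j}(M)$; the only points you compress are this decomposition and the compatibility needed to pass cofiniteness back and forth, namely that $\Ext_R^i(R/\ia,\Gamma_{\m_j}(M))$ is itself $\m_j$-torsion (its support lies in $\Supp_R(R/\ia)\cap\{\m_j\}$, or use that $\Ext^i_R(R/\ia,-)$ commutes with the filtered colimit $\Gamma_{\m_j}(M)=\varinjlim(0:_M\m_j^n)$), so that finite length over $R_{\m_j}$ is the same as finite length over $R$ --- standard, but worth a sentence. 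The local case is fine: $\Supp_RM\subseteq\{\m\}$ does imply $M$ is $\m$-torsion (each cyclic submodule has finite length), the socle criterion gives Artinianness, and the Matlis-duality computation $\Ext_R^i(R/\ia,M)\cong\Tor_i^R(R/\ia,M^\vee)^\vee\cong\Tor_i^{\widehat R}(\widehat R/\ia\widehat R,M^\vee)^\vee$ together with $(M^\vee/\ia M^\vee)^\vee\cong(0:_M\ia)$ and the resulting support estimate for $M^\vee$ correctly yields finite length, hence finite generation, of each $\Ext_R^i(R/\ia,M)$. So the proposal is a legitimate self-contained proof of the cited result; your closing remark that one could instead induct on a generating set of $\ia$ using the Serre property of Artinian cofinite modules is also viable and closer in flavor to how such statements are handled elsewhere in this paper (e.g.\ Theorem \ref{Teo}).
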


\begin{lem} \label{LEMMA}
	Suppose that $\ia \subseteq \ib$ and $\dim_R R/\ib = 0$. Let $N$ be an $\ia$-weakly finite $R$-module. Then $\Gamma_{\ib}(N)$ is Artinian and $\ia$- and $\ib$-cofinite.
\end{lem}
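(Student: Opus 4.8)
The plan is to reduce to the key structural tool, Proposition \ref{Obs3N}, which says that a module supported in $V(\ib)$ is Artinian and $\ib$-cofinite as soon as it has an element of $\ib$ whose annihilator submodule has finite length (indeed, the proposition gives the stronger statement that $(0:_M x)$ being Artinian $\ib$-cofinite suffices). So first I would set $L := \Gamma_{\ib}(N)$ and observe that $\Supp_R(L) \subseteq V(\ib)$, so $L$ is $\ib$-torsion; since $\ia \subseteq \ib$ it is also $\ia$-torsion, hence $\Supp_R(L) \subseteq V(\ia)$, and it therefore suffices to prove Artinianness plus $\ib$-cofiniteness, because over a module supported in $V(\ib) \subseteq V(\ia)$ the hypotheses $\Ext^i_R(R/\ib,-)$ finite and $\Ext^i_R(R/\ia,-)$ finite are equivalent (one can compare $R/\ia$-resolutions through the finite $R/\ia$-module $R/\ib$, or simply quote Melkersson's characterization). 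So the real content is: $L$ is Artinian and $\ib$-cofinite.

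Next I would run an induction on $n = \dim_R N$. If $n = 0$ then $N$ is an $\ia$-weakly finite module of dimension zero, and Property 1 of Definition \ref{DefWF} gives that $\Hom_R(R/\ia, N)$ is finitely generated; since $N$ has dimension $0$ its support is a finite set of maximal-ish primes, in particular $\Supp_R(N) \subseteq V(\ia')$ for $\ia' = \bigcap_{\p \in \Supp N}\p$, and one gets $N$ itself is Artinian and $\ia$-cofinite — here $\Gamma_{\ib}(N)$ is a submodule, hence also Artinian, and $\ib$-cofinite by Lemma \ref{OBS} (the class of Artinian $\ib$-cofinite modules is Serre). For the inductive step with $n > 0$: if $\Gamma_{\ia}(N) = N$ then $N$ is $\ia$-torsion of dimension $n>0$, and I would want to instead pass to $\overline{N} = N/\Gamma_{\ib}(N)$ — wait, more carefully: I would split into whether $N$ is $\ib$-torsion. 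If $\Gamma_{\ib}(N) = N$, then $\dim_R N = n > 0$ forces the existence of a prime in $\Supp(N)$ not maximal, contradicting $\dim R/\ib = 0$ which forces every prime containing $\ib$ to be maximal; hence in fact $\Gamma_{\ib}(N) = N$ can only happen when $n = 0$, already handled. So for $n > 0$ the module $\Gamma_{\ib}(N)$ is a proper submodule. By Property 3, $|\Ass_R N| < \infty$, and since $\Gamma_{\ib}(N) \neq N$ there is (by Lemma \ref{Obs2N} applied to $N/\Gamma_{\ib}(N)$, whose associated primes are the ones not containing $\ib$, hence form a finite set) an element $x \in \ib$ that is regular on $N/\Gamma_{\ib}(N)$; I would actually prefer to produce $x \in \ia \subseteq \ib$ regular on all of $N$ using Property 2, which asserts precisely $N \neq xN$ for any regular element $x$ of $R$, together with $N/xN \in \mathcal{W}$ and $\dim_R N/xN = n - 1$.

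Then I would exploit the short exact sequence $0 \to N \xrightarrow{x} N \to N/xN \to 0$, apply $\Gamma_{\ib}(-)$, and obtain an exact sequence
$$0 \to \Gamma_{\ib}(N) \xrightarrow{x} \Gamma_{\ib}(N) \to \Gamma_{\ib}(N/xN) \to N/xN,$$
whence $(0:_{\Gamma_{\ib}(N)} x)$ embeds — actually since $x$ is regular on $N$ it is regular on $\Gamma_{\ib}(N)$, so $(0:_{\Gamma_{\ib}(N)}x) = 0$. Hmm, that makes $\Gamma_{\ib}(N)/x\Gamma_{\ib}(N)$ the relevant object via $\Gamma_{\ib}(N)/x\Gamma_{\ib}(N) \hookrightarrow \Gamma_{\ib}(N/xN)$. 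By the inductive hypothesis applied to $N/xN \in \mathcal{W}$ with $\dim_R N/xN = n-1$, the module $\Gamma_{\ib}(N/xN)$ is Artinian and $\ib$-cofinite; by Lemma \ref{OBS} its submodule $\Gamma_{\ib}(N)/x\Gamma_{\ib}(N)$ is Artinian and $\ib$-cofinite, in particular $\bigl(0:_{\Gamma_{\ib}(N)/x\Gamma_{\ib}(N)}\ib\bigr)$ has finite length, and since $(0:_{\Gamma_{\ib}(N)}\ib)$ embeds into that, $(0:_{\Gamma_{\ib}(N)}\ib)$ has finite length too; applying Proposition \ref{Obs3N} to $\Gamma_{\ib}(N)$ (supported in $V(\ib)$) concludes that $\Gamma_{\ib}(N)$ is Artinian and $\ib$-cofinite. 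The main obstacle I anticipate is the bookkeeping in the base case and making sure the element $x$ produced by Property 2 can be taken inside $\ia$ (or $\ib$) so that the $x$-divided module still lands in $\mathcal{W}$ with the dimension dropping by exactly one — Property 2 as stated quantifies over regular elements of $R$, so one must check $\ia$ contains an $N$-regular element, which follows from Property 3 ($|\Ass N| < \infty$) together with $\Gamma_{\ia}(N) \neq N$ via Lemma \ref{Obs2N}; the case $\Gamma_{\ia}(N) = N$ needs the separate observation, via Property 4, that $N/\Gamma_{\ia}(N) \in \mathcal{W}$ and one may first replace $N$ by it without changing $\Gamma_{\ib}(N)$ up to the relevant finiteness — and verifying that replacement is harmless is the one delicate point.
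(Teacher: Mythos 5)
Your overall strategy does not match the actual difficulty of the statement, and two of its steps fail. First, the opening reduction --- that for a module supported in $V(\ib)$ the conditions ``$\Ext^i_R(R/\ib,-)$ finitely generated for all $i$'' and ``$\Ext^i_R(R/\ia,-)$ finitely generated for all $i$'' are equivalent --- is false when $\ia \subsetneq \ib$: over $R=k[[x,y]]$ with $\ia=(x)$ and $\ib=\m$, the module $E(R/\m)$ is Artinian, supported in $V(\m)$ and $\m$-cofinite, yet $\Hom_R(R/(x),E(R/\m))\cong E_{R/(x)}(k)$ is not finitely generated, so it is not $(x)$-cofinite. The $\ia$-cofiniteness in the lemma is therefore not formal; it comes from Property 1 of Definition \ref{DefWF}: $(0:_{\Gamma_{\ib}(N)}\ia)\subseteq(0:_N\ia)=\Hom_R(R/\ia,N)$ is finitely generated, and once Artinianness of $\Gamma_{\ib}(N)$ is known this submodule has finite length, so Proposition \ref{Obs3N} (applicable since $\Supp_R\Gamma_{\ib}(N)\subseteq V(\ib)\subseteq V(\ia)$) yields the $\ia$-cofiniteness.

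Second, the induction on $\dim_R N$ collapses exactly where the statement has content. In the inductive step you want $x\in\ia\subseteq\ib$ regular on $N$; but if such an $x$ exists then $\Gamma_{\ib}(N)=0$ (every element of $\Gamma_{\ib}(N)$ is killed by a power of $\ib$, hence by a power of $x$), so that branch proves nothing, while in the essential case $\Gamma_{\ib}(N)\neq 0$ no element of $\ib$ is $N$-regular, and your fallback --- replacing $N$ by $N/\Gamma_{\ia}(N)$ via Property 4 --- annihilates precisely the module you must control, because $\ia\subseteq\ib$ forces $\Gamma_{\ib}(N)\subseteq\Gamma_{\ia}(N)$; this is the ``delicate point'' you flag, and it is a genuine gap. (There is also a mismatch in Property 2, which concerns elements regular on $R$ rather than the $N$-regular elements Lemma \ref{Obs2N} provides, and the base-case claim that $\dim_R N=0$ plus Property 1 makes $N$ itself Artinian and $\ia$-cofinite is unjustified without $\Supp_R N\subseteq V(\ia)$.) None of this machinery is needed: since $\dim_R R/\ib=0$, the module $(0:_{\Gamma_{\ib}(N)}\ib)=(0:_N\ib)\subseteq(0:_N\ia)$ is finitely generated and supported in maximal ideals, hence of finite length, so Proposition \ref{Obs3N} immediately gives that $\Gamma_{\ib}(N)$ is Artinian and $\ib$-cofinite; the second application of Proposition \ref{Obs3N} described above then gives $\ia$-cofiniteness. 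This dimension-free argument is the paper's proof.
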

\begin{proof}
Note that $(0 :_N \ia) = \Hom_R(R/\ia, N)$ is finitely generated, since $N$ is $\ia$-weakly finite. Since $\ia \subseteq \ib$, $\Hom_R(R/\ib, N) = (0 :_N \ib) \subseteq (0 :_N \ia)$ and, since $R$ is a Noetherian ring, $\Hom_R(R/\ib, N)$ is finitely generated. Moreover, applying functors $\Hom_R(R/\ia, -)$ and $\Hom_R(R/\ib, -)$ to the exact sequence 
$$0 \rightarrow \Gamma_{\ib}(N) \rightarrow N$$
and using the same argument, we conclude $\Hom_R(R/\ia, \Gamma_{\ib}(N))$ and $\Hom_R(R/\ib, \Gamma_{\ib}(N))$ are finitely generated $R$-modules.

Since $\dim_R R/\ib = 0$, it follows that $\Hom_R(R/\ib, \Gamma_{\ib}(N)) = (0 :_{\Gamma_{\ib}(N)} \ib)$ has finite length. Thus, by Proposition \ref{Obs3N}, $\Gamma_{\ib}(N)$ is Artinian and $\ib$-cofinite. Hence, $\Hom_R(R/\ia, \Gamma_{\ib}(N)) = (0 :_{\Gamma_{\ib}(N)} \ia)$ is Artinian and therefore, has finite length. Then, again by Proposition \ref{Obs3N}, $\Gamma_{\ib}(N)$ is also $\ia$-cofinite.  
\end{proof}

Now we are able to show the first main result of this section.

\begin{thm} \label{Teo}
Suppose that $\ia \subseteq \ib$ and $\dim_R R/\ib = 0$. Let $M$ be a finitely generated $R$-module and let $N$ be a $\ia$-weakly finite $R$-module over $M$. Then $\h_{\ib}^i (M, N)$ is an Artinian $R$-module and is $\ia$- and $\ib$-cofinite, for all $i \in \N_0$.
\end{thm}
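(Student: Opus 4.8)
The plan is to reduce the statement to the torsion case handled in Lemma \ref{LEMMA} by an induction on $i$, using a short exact sequence that separates the $\ib$-torsion part of $N$ from its $\ib$-torsion-free quotient. First I would treat the module $\Gamma_{\ib}(N)$: since $\dim_R R/\ib = 0$ and $N$ is $\ia$-weakly finite, Lemma \ref{LEMMA} gives that $\Gamma_{\ib}(N)$ is Artinian and both $\ia$- and $\ib$-cofinite. By Lemma \ref{Rem2.6N} we have $\h_{\ib}^i(M, \Gamma_{\ib}(N)) \cong \Ext_R^i(M, \Gamma_{\ib}(N))$ for all $i$, and since $M$ is finitely generated and the class of Artinian $\ib$-cofinite modules is a Serre subcategory (Lemma \ref{OBS}), Proposition \ref{Lema3N} shows each $\Ext_R^i(M, \Gamma_{\ib}(N))$ is Artinian and $\ib$-cofinite; the same argument with the Serre subcategory of Artinian $\ia$-cofinite modules gives $\ia$-cofiniteness. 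So the theorem holds when $N$ is $\ib$-torsion.

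Next I would pass to $\overline{N} := N/\Gamma_{\ib}(N)$. Property 4 of Definition \ref{DefWF} (applied with $\ib$ in place of $\ia$ — note that $N$ is also $\ib$-weakly finite over $M$, since $\ia \subseteq \ib$ forces $\Hom_R(R/\ib,-)$ and $\Hom_R(M/\ib M,-)$ of a weakly finite module to be finitely generated as submodules of the corresponding $\ia$-versions, and the dimension-drop and finiteness of associated primes are inherited) tells us $\overline{N}$ is again $\ia$-weakly finite over $M$. Moreover $\overline{N}$ is $\ib$-torsion free, so by Lemma \ref{Obs2N} together with property 3 there is an $\overline{N}$-regular element $x \in \ib$. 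The short exact sequence
$$0 \longrightarrow \overline{N} \stackrel{x}{\longrightarrow} \overline{N} \longrightarrow \overline{N}/x\overline{N} \longrightarrow 0$$
induces a long exact sequence in generalized local cohomology, and by property 2 the module $\overline{N}/x\overline{N}$ is $\ia$-weakly finite over $M$ with $\dim_R \overline{N}/x\overline{N} = \dim_R \overline{N} - 1$, which sets up an induction on $\dim_R \overline{N}$ (the base case $\dim_R \overline{N} \le 0$ meaning $\overline{N}$ has finite length, or more carefully $\overline{N}$ is $\ib$-torsion and hence already covered). From the long exact sequence each $\h_{\ib}^i(M, \overline{N}/x\overline{N})$ is Artinian and $\ia$- and $\ib$-cofinite by the inductive hypothesis, so the cokernel of multiplication by $x$ on $\h_{\ib}^i(M, \overline{N})$ and the kernel of multiplication by $x$ on $\h_{\ib}^{i+1}(M, \overline{N})$ lie in the Serre subcategory of Artinian $\ib$-cofinite modules; applying Proposition \ref{Obs3N} (the statement ``if $(0:_L x)$ is Artinian and $\ib$-cofinite then $L$ is'') to $L = \h_{\ib}^i(M,\overline{N})$, whose support is contained in $V(\ib)$, yields that $\h_{\ib}^i(M,\overline{N})$ is Artinian and $\ib$-cofinite, and then the $\ia$-cofinite version of Proposition \ref{Obs3N} gives $\ia$-cofiniteness as well.

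Finally I would splice the two pieces together: the short exact sequence $0 \to \Gamma_{\ib}(N) \to N \to \overline{N} \to 0$ induces a long exact sequence relating $\h_{\ib}^i(M, \Gamma_{\ib}(N))$, $\h_{\ib}^i(M, N)$ and $\h_{\ib}^i(M, \overline{N})$; since the outer terms are Artinian and $\ia$- and $\ib$-cofinite by the previous two steps, and these classes are Serre subcategories, the middle term $\h_{\ib}^i(M, N)$ is Artinian and $\ia$- and $\ib$-cofinite for every $i \in \N_0$. The main obstacle I anticipate is the bookkeeping in the induction on dimension — in particular verifying cleanly that the hypotheses of Definition \ref{DefWF} are preserved under passing to $N/\Gamma_{\ib}(N)$ and then modding out by a regular element, and pinning down the correct base case — rather than any single hard estimate; the rest is a standard dévissage once Lemma \ref{LEMMA} and Proposition \ref{Obs3N} are in hand.
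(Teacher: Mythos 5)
Your dévissage is the same as the paper's up to one crucial choice: the paper's proof runs by induction on the cohomological index $i$, with base case $i=0$ handled via $\h_{\ib}^0(M,N)\cong\Hom_R(M,\Gamma_{\ib}(N))$ and Lemma \ref{LEMMA}, and with the inductive step applying the hypothesis at degree $i-1$ to the $\ia$-weakly finite module $N/rN$ (where $r\in\ib$ is $N$-regular after reducing to $\Gamma_{\ib}(N)=0$), then concluding via Lemma \ref{OBS} and Proposition \ref{Obs3N}. You instead induct on $\dim_R\overline{N}$, where $\overline{N}=N/\Gamma_{\ib}(N)$, and this substitution opens a genuine gap. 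First, the theorem imposes no finiteness assumption on $\dim_R N$ (contrast Theorem \ref{topartcof}, where $\dim_R N=n<\infty$ is explicitly added); over a general Noetherian ring an $\ia$-weakly finite module may have infinite dimension, so your induction is not well-founded in the stated generality. Second, even granting finite dimension, your base case is not justified: $\dim_R\overline{N}\le 0$ implies neither that $\overline{N}$ has finite length (it need not be finitely generated) nor that $\overline{N}$ is $\ib$-torsion (its support consists of maximal ideals, but these need not contain $\ib$); and since $\overline{N}$ is $\ib$-torsion-free by construction, the second reading would amount to proving $\overline{N}=0$, which you do not do. Making the dimension-zero case rigorous requires exactly the delicate use of property 2 of Definition \ref{DefWF} (forcing a contradiction between $N\neq xN$ and $\dim_R N/xN=-1$) that you defer as ``bookkeeping,'' whereas the paper's induction on $i$ needs no dimension bookkeeping at all: the degree strictly drops, and the base case is the clean computation of $\h_{\ib}^0$.

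The remaining ingredients of your argument match the paper: the treatment of the $\ib$-torsion part via Lemmas \ref{LEMMA}, \ref{Rem2.6N} and Proposition \ref{Lema3N}, the use of Lemma \ref{Obs2N} to produce a regular element in $\ib$, and the passage from the kernel of multiplication by that element to the whole module via Proposition \ref{Obs3N}. Your parenthetical care about whether $N/\Gamma_{\ib}(N)$ remains in the weakly finite class (property 4 is only stated for $\ia$, not for the larger ideal $\ib$) is a point the paper itself glosses over, so it is not held against you; likewise the slight looseness in deducing $\ia$-cofiniteness from an element of $\ib$ is shared with the paper's own proof. The fix for your proposal is simply to replace the induction on $\dim_R\overline{N}$ by the paper's induction on $i$, applying the inductive hypothesis at degree $i-1$ to $\overline{N}/x\overline{N}$.
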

\begin{proof}
We use induction on $i$. Assume that $i=0$. We claim $\h_{\ib}^0(M,N)$ is Artinian and $\ia$- and $\ib$-cofinite.
Since $\h_{\ib}^0(M,N) \cong \Hom_R(M, \Gamma_{\ib}(N))$ the assertion follows from Lemma \ref{LEMMA} and Proposition \ref{Lema3N}.

 Now, assume $i > 0$ and that $\h_{\ib}^j(M,N)$ is Artinian and $\ia$- and $\ib$-cofinite, for $j < i$. Consider the exact sequence
$$\h_{\ib}^i (M, \Gamma_{\ib}(N)) \ \rightarrow \ \h_{\ib}^i(M,N) \ \rightarrow  \ \h_{\ib}^i \left( M, N / \Gamma_{\ib}(N) \right).$$
By Lemma \ref{LEMMA}, $\Gamma_{\ib} (N)$ is Artinian and $\ia$- and $\ib$-cofinite. Since $M$ is finitely generated, Proposition \ref{Lema3N} and Lemma \ref{Rem2.6N} imply that $\h_{\ib}^i(M, \Gamma_{\ib}(N))$ is Artinian and $\ia$- and $\ib$-cofinite. Therefore $\h_{\ib}^i(M,N)$ is Artinian and $\ia$- and $\ib$-cofinite if and only if $\h_{\ib}^i \left( M, N / \Gamma_{\ib}(N) \right)$ is Artinian and $\ia$- and $\ib$-cofinite. Hence we may assume that  $\Gamma_{\ib}(N) = 0$ and so, by Lemma \ref{Obs2N}, there is an element $r \in \ib$ which is $N$-regular.

The exact sequence
$$0 \ \rightarrow \ N \ \stackrel{\cdot r}{\rightarrow} \ N \ \rightarrow \ N / rN \ \rightarrow \ 0$$
induces the following exact sequence
$$\h_{\ib}^{i-1}\left( M, N / rN \right) \ \stackrel{}{\longrightarrow} \ \h_{\ib}^i (M,N) \ \stackrel{\cdot r}{\longrightarrow} \ \h_{\ib}^i(M,N).$$
Since $N / rN$ is $\ia$-weakly finite, by induction $\h_{\ib}^{i-1} \left( M, N / rN \right)$ is Artinian and $\ia$- and $\ib$-cofinite. Thus, by Lemma \ref{OBS}, $(0 :_{\h_{\ib}^i(M,N)} r)$ is Artinian and $\ia$- and $\ib$-cofinite. Therefore, by Proposition \ref{Obs3N}, $\h_{\ib}^i(M,N)$ is Artinian and $\ia$- and $\ib$-cofinite.
\end{proof}

\begin{cor}
	Let $(R, \m)$ be a local ring. Let $M$ be a finitely generated $R$-module and $N$ be a weakly finite $R$-module over $M$. Then $\h_{\m}^i (M, N)$ is Artinian for all $i \in \N_0$. 
\end{cor}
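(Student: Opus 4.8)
The plan is to deduce this statement directly from Theorem \ref{Teo}, since the corollary is exactly its $\ia=\ib=\m$ instance. First I would unwind the terminology: in the local setting $(R,\m)$, saying that $N$ is a \emph{weakly finite $R$-module over $M$} is, by the conventions fixed after Definition \ref{DefWF}, the same as saying that $N$ is $\m$-weakly finite over $M$, i.e. $N$ lies in the class $\mathcal{W}$ attached to the pair $(M,\m)$. Having made this identification explicit, the hypotheses of Theorem \ref{Teo} are all met with the choice $\ia=\ib=\m$: the containment $\ia\subseteq\ib$ is trivial, $\dim_R R/\m=0$ because $R/\m$ is a field, $M$ is finitely generated, and $N$ is $\m$-weakly finite over $M$ by assumption.

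Applying Theorem \ref{Teo} then gives at once that $\h_{\m}^i(M,N)$ is an Artinian $R$-module (and moreover $\m$-cofinite) for every $i\in\N_0$, which is the desired conclusion. So the ``proof'' is a one-line specialization; there is essentially no obstacle, and the only point requiring care is the bookkeeping of the definitions, namely checking that the adjective ``weakly finite over $M$'' appearing in the corollary is precisely the hypothesis ``$\ia$-weakly finite over $M$'' with $\ia=\m$ that Theorem \ref{Teo} requires.

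I would also add a short remark: taking $M=R$ recovers Bagheri's theorem \cite[Theorem 2.1]{B} that $\h_{\m}^i(N)$ is Artinian for weakly finite $N$, via the isomorphism $\h_{\m}^i(R,N)\cong\h_{\m}^i(N)$; thus the corollary is exactly the generalized-local-cohomology version of that result, obtained here as the local case of the non-local Theorem \ref{Teo}. If one preferred a self-contained argument, the same induction on $i$ used in the proof of Theorem \ref{Teo} — splitting off $\Gamma_{\m}(N)$ via Lemma \ref{LEMMA} and Lemma \ref{Rem2.6N}, then using an $N$-regular element $r\in\m$ together with Lemma \ref{OBS} and Proposition \ref{Obs3N} — goes through verbatim with $\ib$ replaced by $\m$; but invoking Theorem \ref{Teo} is cleaner and yields the stronger $\m$-cofiniteness for free.
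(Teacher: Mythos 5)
Your proof is correct and is exactly the paper's intended argument: the corollary is the specialization of Theorem \ref{Teo} to $\ia=\ib=\m$ (where $\dim_R R/\m=0$ is automatic), with ``weakly finite over $M$'' read as ``$\m$-weakly finite over $M$'' per the conventions after Definition \ref{DefWF}; the paper gives no separate proof precisely because of this.
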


\begin{cor} [{\cite[Theorem 2.1]{B}}]
	Let $(R, \m)$ be a local ring and let $N$ be a weakly finite $R$-module. Then $\h_{\m}^i (N)$ is Artinian for all $i \in \N_0$. 
\end{cor}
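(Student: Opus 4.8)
The plan is to deduce the last corollary directly from the corollary that immediately precedes it, which is our new Theorem \ref{Teo} specialized to the local setting. First I would take $(R,\m)$ local and $N$ a weakly finite $R$-module; by the convention recorded in Definition \ref{DefWF}, ``weakly finite'' means $\ia$-weakly finite over $M$ with the choices $M = R$ and $\ia = \m$. Then I would set $M = R$ in the preceding corollary: a finitely generated $R$-module is in particular weakly finite over itself (as noted right after Definition \ref{DefWF}, every finitely generated module is weakly finite), so $M = R$ is an admissible choice, and $N$ being weakly finite is exactly the hypothesis of that corollary with $M = R$.

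Next I would invoke the standard isomorphism $\h^i_{\m}(R, N) \cong \h^i_{\m}(N)$ for all $i \in \N_0$, which holds because $R$ is free, hence $\Ext^i_R(R/\m^n R, N) = \Ext^i_R(R/\m^n, N)$ and the direct limit computing $\h^i_{\m}(R,N)$ coincides termwise with the one computing $\h^i_{\m}(N)$. Combining this with the preceding corollary, which gives that $\h^i_{\m}(R, N)$ is Artinian for all $i \in \N_0$, yields immediately that $\h^i_{\m}(N)$ is Artinian for all $i \in \N_0$, which is the assertion of \cite[Theorem 2.1]{B}.

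There is essentially no obstacle here: the only points to check are the bookkeeping facts that $\ia = \ib = \m$ satisfies $\ia \subseteq \ib$ and $\dim_R R/\ib = \dim_R R/\m = 0$ (so that Theorem \ref{Teo}, and hence the preceding corollary, applies), and that the definitional conventions line up so that ``weakly finite'' is literally the $M = R$, $\ia = \m$ case of ``$\ia$-weakly finite over $M$.'' Both are immediate from the setup in Definition \ref{DefWF}. Thus the proof is a one-line specialization, and I would simply write: apply the previous corollary with $M = R$ and use $\h^i_{\m}(R,N) \cong \h^i_{\m}(N)$.
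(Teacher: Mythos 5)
Your proposal is correct and matches the paper's intended argument: the corollary is exactly the specialization $M=R$ (so $\ia=\ib=\m$, weakly finite over $R$ being just weakly finite) of the preceding corollary to Theorem \ref{Teo}, combined with the identification $\h^i_{\m}(R,N)\cong\h^i_{\m}(N)$. The only superfluous remark is invoking that finitely generated modules are weakly finite — the preceding corollary needs $M$ only to be finitely generated, which $R$ trivially is.
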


\begin{cor} \label{Cor2.12}
Suppose that $\ia \subseteq \ib$ and $\dim_R R/\ib = 0$. Let $M$ be a finitely generated $R$-module and $N$ be an $\ia$-weakly finite $R$-module over $M$.  Then $\Ext_R^j(R/\ia,\h_{\mathfrak{b}}^i(M,N))$ and $\Ext_R^j(R/\ib,\h_{\mathfrak{b}}^i(M,N))$ have finite length for all $i$ and $j$ in $\N_0$.
\end{cor}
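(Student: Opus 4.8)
The goal is to show that $\Ext_R^j(R/\ia,\h_{\ib}^i(M,N))$ and $\Ext_R^j(R/\ib,\h_{\ib}^i(M,N))$ have finite length for all $i,j\in\N_0$, under the hypotheses of Theorem \ref{Teo}. The plan is to leverage Theorem \ref{Teo} directly: it tells us that $H:=\h_{\ib}^i(M,N)$ is Artinian and both $\ia$- and $\ib$-cofinite for every $i$. So the problem reduces to the following general fact: if $H$ is an Artinian $\ic$-cofinite $R$-module (for an ideal $\ic$, here taken to be $\ia$ or $\ib$), then $\Ext_R^j(R/\ic,H)$ has finite length for all $j$.

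To prove this general fact, I would argue as follows. Since $H$ is $\ic$-cofinite, by definition $\Ext_R^j(R/\ic,H)$ is finitely generated for all $j$. On the other hand, $H$ is Artinian, and the class of Artinian $\ic$-cofinite modules is a Serre subcategory (Lemma \ref{OBS}); moreover $R/\ic$ is a finitely generated $R$-module, so by Proposition \ref{Lema3N} (applied with $\mathcal S$ the Serre category of Artinian $\ic$-cofinite modules) each $\Ext_R^j(R/\ic,H)$ is again Artinian and $\ic$-cofinite. A module that is simultaneously finitely generated and Artinian has finite length (it is Noetherian and Artinian, hence of finite length). Applying this with $\ic=\ia$ and with $\ic=\ib$ gives both assertions. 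Note that $\Supp_R(H)\subseteq V(\ib)\subseteq V(\ia)$ since $H=\h_{\ib}^i(M,N)$ is $\ib$-torsion, so the support condition in the definition of cofiniteness is in force for both ideals.

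A minor alternative for the finite-length conclusion, avoiding even the remark that finitely generated plus Artinian implies finite length, is to invoke Proposition \ref{Obs3N}: $\Ext_R^j(R/\ic,H)$ has support in $V(\ic)$ and is Artinian and $\ic$-cofinite, so by the characterization there $(0:_{\Ext_R^j(R/\ic,H)}\ic)$ has finite length; but an Artinian module whose socle-type submodule $(0:\ic)$ is of finite length and which is $\ic$-torsion is itself of finite length. Either route is short. The main step — and really the only nontrivial input — is Theorem \ref{Teo} itself, which is already proved; everything after that is formal, combining cofiniteness (finite generation of the Ext's), the Serre-subcategory stability from Lemma \ref{OBS} and Proposition \ref{Lema3N} (Artinianness of the Ext's), and the elementary fact that Noetherian $\cap$ Artinian $=$ finite length. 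So I do not anticipate a genuine obstacle here; the corollary is essentially a packaging of Theorem \ref{Teo}.
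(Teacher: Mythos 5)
Your proposal is correct and follows essentially the same route as the paper: invoke Theorem \ref{Teo} to get that $\h_{\ib}^i(M,N)$ is Artinian and $\ia$- and $\ib$-cofinite, deduce finite generation of the Ext modules from cofiniteness, deduce their Artinianness via Proposition \ref{Lema3N} (the paper uses the Serre class of Artinian modules, you the slightly finer class of Artinian cofinite modules from Lemma \ref{OBS}), and conclude finite length from Noetherian plus Artinian. The alternative ending via Proposition \ref{Obs3N} is an inessential variation.
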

\begin{proof}
By Theorem \ref{Teo}, $\h_{\ib}^i(M,N)$ is Artinian and $\ia$- and $\ib$-cofinite. So, $\Ext_R^j(R/\ia,\h_{\mathfrak{b}}^i(M,N))$ and $\Ext_R^j(R/\ib,\h_{\mathfrak{b}}^i(M,N))$ are finitely generated and, since $R/\ia$ and $R/\ib$ are finitely generated $\Ext_R^j(R/\ia,\h_{\mathfrak{b}}^i(M,N))$ and $\Ext_R^j(R/\ib,\h_{\mathfrak{b}}^i(M,N))$ are Artinian by Proposition \ref{Lema3N}. Therefore, $\Ext_R^j(R/\ia,\h_{\mathfrak{b}}^i(M,N))$ and $\Ext_R^j(R/\ib,\h_{\mathfrak{b}}^i(M,N))$ have finite length.
\end{proof}

As a consequence of this corollary, we have the following result:

\begin{cor}
Let $(R, \m)$ be a local ring. Suppose $\ia \subseteq \ib$ such that $\dim_R R/\ib = 0$. Let $M$ be a finitely generated $R$-module and $N$ be a $\ia$-weakly finite $R$-module over $M$. Then $\Att_R(\Ext_R^j(R/\ia,\h_{\mathfrak{b}}^i(M,N))) = \{ \m \} = \Att_R(\Ext_R^j(R/\ib,\h_{\mathfrak{b}}^i(M,N)))$, for all $i$ and $j$ in $\N_0$.
\end{cor}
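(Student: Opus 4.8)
The plan is to reduce everything to the previous corollary together with standard facts about Artinian modules over a local ring. By Corollary \ref{Cor2.12}, for all $i$ and $j$ in $\N_0$ the modules $\Ext_R^j(R/\ia,\h_{\ib}^i(M,N))$ and $\Ext_R^j(R/\ib,\h_{\ib}^i(M,N))$ have finite length; in particular each of them is an Artinian $R$-module, and it is nonzero precisely when the corresponding $\Ext$ does not vanish. So first I would dispose of the trivial case: if the module in question is zero there is nothing to prove, since $\Att_R(0) = \emptyset$ by convention (or one simply states the result for the nonzero ones). Assuming it is nonzero, it is a nonzero Artinian module over the Noetherian local ring $(R,\m)$, hence it is representable (every Artinian module over a Noetherian ring admits a secondary representation — see \cite{macd}), so $\Att_R$ of it is defined and nonempty.

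The second and main step is the identification $\Att_R(T) = \{\m\}$ for any nonzero $R$-module $T$ of finite length over a local ring $(R,\m)$. I would argue this directly from the definition of attached prime recalled in the introduction: $\p \in \Att_R(T)$ means $\p = (K :_R T)$ for some submodule $K$ of $T$, equivalently $\p = \sqrt{(0 :_R T_i)}$ for the secondary components $T_i$ in a secondary representation of $T$. Since $T$ has finite length, $\Supp_R(T) = \{\m\}$, so $\sqrt{(0:_R T)} = \m$, and every quotient $T/K$ (which is the relevant object, as $(K:_R T) = (0 :_R T/K)$) is again of finite length, hence either zero or with radical annihilator $\m$. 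Therefore the only prime that can arise is $\m$, and since $T \neq 0$ at least one secondary component is nonzero, giving $\m \in \Att_R(T)$. Applying this with $T = \Ext_R^j(R/\ia,\h_{\ib}^i(M,N))$ and with $T = \Ext_R^j(R/\ib,\h_{\ib}^i(M,N))$ yields the claimed equalities.

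Alternatively, and perhaps more cleanly for the write-up, I would invoke the well-known duality: over a complete local ring Matlis duality interchanges finitely generated and Artinian modules and sends $\Ass$ to $\Att$, so a module of finite length $T$ has $\Att_R(T) = \Ass_R(T^\vee)$ where $T^\vee$ again has finite length, forcing $\Att_R(T) = \{\m\}$; one then passes from $R$ to $\widehat{R}$ and back using that $\Att$ behaves well under completion for Artinian modules (the attached primes over $R$ are the contractions of those over $\widehat{R}$), which again only produces $\m$. The only point requiring a word of care — and the closest thing to an obstacle — is making sure the module is nonzero before speaking of its attached primes, since $\Att_R(0) = \emptyset$; but this is harmless, as one simply restricts the statement to the (possibly empty) set of pairs $(i,j)$ for which the $\Ext$ module is nonzero, or adopts the convention that the asserted equality is read as "$\subseteq \{\m\}$ with equality whenever the module is nonzero." No deep input beyond Corollary \ref{Cor2.12} and the finite-length characterization of $\Att$ is needed.
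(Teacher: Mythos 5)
Your proposal is correct and follows essentially the same route as the paper: Corollary \ref{Cor2.12} gives finite length, hence $\Att_R \subseteq \{\m\}$ (the paper cites Ooishi's Proposition 2.9 for this containment, while you derive it directly from the definition of attached prime, or via Matlis duality), and nonemptiness of the set of attached primes then forces equality. Your explicit caveat about the case where the $\Ext$ module vanishes (so that its set of attached primes is empty) is a fair point that the paper glosses over by simply asserting $\Att_R(\Ext_R^j(R/\ia,\h_{\ib}^i(M,N))) \neq \emptyset$.
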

\begin{proof}
We will show  that $\Att_R(\Ext_R^j(R/\ia,\h_{\mathfrak{b}}^i(M,N))) = \{ \m \}$; the other one is analogous. By Corollary \ref{Cor2.12}, $\Ext_R^j(R/\ia,\h_{\mathfrak{b}}^i(M,N))$ has finite length, so $\Supp_R(\Ext_R^j(R/\ia,\h_{\mathfrak{b}}^i(M,N))) = \{\m\}$.

Now, by \cite[Proposition 2.9 (2) and (3)]{ooish}, $\Att_R(\Ext_R^j(R/\ia,\h_{\mathfrak{b}}^i(M,N))) \subseteq \{ \m \}$. Therefore $\Att_R(\Ext_R^j(R/\ia,\h_{\mathfrak{b}}^i(M,N))) = \{ \m \}$, since $\Att_R(\Ext_R^j(R/\ia,\h_{\mathfrak{b}}^i(M,N))) \neq \emptyset$.
\end{proof}

\subsection*{The local case}
From now on, in this section, we will assume that our ring $R$  is local with maximal ideal $\mathfrak{m}$.

\begin{thm}\label{topartcof}
Let $M$ be a finitely generated $R$-module such that $\pdim M = d < \infty$ and let $N$ be a $\ib$-weakly finite $R$-module over $M$ such that $\dim_R N = n < \infty$. Suppose $\ia \subseteq \ib$. Then $\h_{\ia}^{d+n}(M, N)$ is Artinian and $\ib$-cofinite.

In particular, if $\ia = \ib$, $\h_{\ia}^{d+n}(M, N)$ is Artinian and $\ia$-cofinite.
\end{thm}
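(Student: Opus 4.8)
The plan is to run an induction on $n = \dim_R N$, reducing everything to the case already handled by Theorem \ref{Teo}. The base case $n = 0$: here $\dim_R R/\ib' = 0$ for $\ib' = \ib + \Ann_R N$ — more precisely, $\Supp_R N \subseteq V(\ib')$ with $\dim R/\ib' = 0$ — so after noting that $\h_{\ia}^{d}(M,N) = \h_{\ib'}^{d}(M,N)$ up to the usual support considerations, and that $\pdim M = d$ forces $\h_{\ia}^{i}(M,N) = 0$ for $i > d + n = d$ (by the standard spectral-sequence/Ext-vanishing bound $\h_{\ia}^i(M,N) = 0$ for $i > \pdim M + \dim N$), the top module $\h_{\ia}^{d}(M,N)$ is a quotient/submodule in a situation covered by Theorem \ref{Teo} (or its proof), hence Artinian and $\ib$-cofinite. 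I would first isolate the vanishing statement $\cdim(\ia, M, N) \le d + n$ as a preliminary, since it is what makes $\h_{\ia}^{d+n}$ the ``top'' module and gives the right-exactness needed below.

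Next, for the inductive step ($n > 0$), I would split off the $\ib$-torsion. Applying $\h_{\ia}^{\bullet}(M, -)$ to $0 \to \Gamma_{\ib}(N) \to N \to N/\Gamma_{\ib}(N) \to 0$ gives a long exact sequence; by Lemma \ref{Rem2.6N} together with Proposition \ref{Lema3N}, the terms $\h_{\ia}^{i}(M, \Gamma_{\ib}(N)) \cong \Ext_R^i(M, \Gamma_{\ib}(N))$ are Artinian and $\ib$-cofinite provided $\Gamma_{\ib}(N)$ is — and $\Gamma_{\ib}(N)$ is $\ib$-cofinite and Artinian because, being a quotient of the $\ib$-weakly finite module structure, $(0 :_{\Gamma_{\ib}(N)} \ib) = \Hom_R(R/\ib, N)$ is finitely generated, $\ib$-torsion, hence (being also relevant to a $\dim \le n$ situation) one argues as in Lemma \ref{LEMMA} after reducing dimension. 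So by Lemma \ref{OBS} (Serre subcategory) we may replace $N$ by $N/\Gamma_{\ib}(N)$ and assume $\Gamma_{\ib}(N) = 0$. Then by property 3 of Definition \ref{DefWF} ($|\Ass_R N| < \infty$) and Lemma \ref{Obs2N} there is an $N$-regular element $r \in \ib$. The short exact sequence $0 \to N \xrightarrow{\;\cdot r\;} N \to N/rN \to 0$ yields
$$\h_{\ia}^{d+n-1}(M, N/rN) \longrightarrow \h_{\ia}^{d+n}(M,N) \xrightarrow{\;\cdot r\;} \h_{\ia}^{d+n}(M,N) \longrightarrow \h_{\ia}^{d+n}(M, N/rN).$$
By property 2 of Definition \ref{DefWF}, $N/rN$ is $\ib$-weakly finite over $M$ with $\dim_R N/rN = n - 1$, so $d + n = d + (n-1) + 1 > \pdim M + \dim_R N/rN$, whence the rightmost term vanishes and $\cdot r$ is surjective on $\h_{\ia}^{d+n}(M,N)$; moreover $\h_{\ia}^{d+n-1}(M, N/rN)$ is Artinian and $\ib$-cofinite by the induction hypothesis (it is the top module for $N/rN$). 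Therefore $(0 :_{\h_{\ia}^{d+n}(M,N)} r)$, being a quotient of $\h_{\ia}^{d+n-1}(M, N/rN)$, is Artinian and $\ib$-cofinite, and Proposition \ref{Obs3N} gives that $\h_{\ia}^{d+n}(M,N)$ itself is Artinian and $\ib$-cofinite. The ``in particular'' clause is then immediate by taking $\ia = \ib$.

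The main obstacle I anticipate is the base case $n = 0$ and, relatedly, pinning down the vanishing bound $\cdim(\ia, M, N) \le \pdim M + \dim_R N$ for a non–finitely-generated, merely $\ib$-weakly finite $N$: the classical proof of this bound uses finite generation of $N$, so I would need to either dévissage through the regular element (using property 2 again, by a separate induction on $\dim N$, with the base $\dim N = 0$ handled via $\Gamma$ and Lemma \ref{Rem2.6N}) or verify that the Grothendieck-type spectral sequence $\Ext_R^p(M, \h_{\ia}^q(N)) \Rightarrow \h_{\ia}^{p+q}(M,N)$ still delivers the bound — here $\h_{\ia}^q(N) = 0$ for $q > \dim_R N$ holds for arbitrary $N$ by Grothendieck vanishing, and $\Ext_R^p(M,-) = 0$ for $p > d$ since $\pdim M = d$, so in fact the bound is unconditional. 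Once that is in hand, the base case reduces cleanly to Theorem \ref{Teo} with $\ib$ replaced by $\ib + \Ann_R N$, and the rest is the dévissage above.
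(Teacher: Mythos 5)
Your dévissage has a genuine gap at its final step. After reducing to $\Gamma_{\ib}(N)=0$ and producing $r\in\ib$ with $r\,\h_{\ia}^{d+n}(M,N)=\h_{\ia}^{d+n}(M,N)$ and $(0:_{\h_{\ia}^{d+n}(M,N)}r)$ Artinian and $\ib$-cofinite, you invoke Proposition \ref{Obs3N} with respect to $\ib$. But that criterion carries the standing hypothesis that the module has support in $V(\ib)$, and this hypothesis is essential (without it, $(0:_Rx)=0$ for a regular element $x$ of a non-Artinian domain would give a contradiction). All you know a priori is that $\h_{\ia}^{d+n}(M,N)$ is $\ia$-torsion, i.e.\ $\Supp_R \h_{\ia}^{d+n}(M,N)\subseteq V(\ia)$, and since $\ia\subseteq\ib$ the inclusion $V(\ib)\subseteq V(\ia)$ goes the wrong way. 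In fact $\Supp_R \h_{\ia}^{d+n}(M,N)\subseteq V(\ib)$ is part of what "$\ib$-cofinite" asserts and is essentially the crux of the theorem; your induction never establishes it, and I do not see how to extract it from $r$-divisibility plus Artinianness of the $r$-torsion alone (localizing at a prime outside $V(\ib)$, the element $r$ may simply be a unit). Note the contrast with the proof of Theorem \ref{Teo}, which you are transplanting: there the cohomology is taken with respect to $\ib$ itself, so the support condition is automatic; moving to $\h_{\ia}$ while keeping $r\in\ib$ is exactly where the argument breaks. The paper avoids this entirely by a different device: choosing $x\in\m\setminus\ia$ and using the localization exact sequence of Divaani-Aazar--Hajikarimi, the vanishing $\h_{\ia R_x}^{d+n}(M_x,N_x)=0$ (as $\dim_{R_x}N_x<n$) yields an epimorphism $\h_{\ia+xR}^{d+n}(M,N)\twoheadrightarrow\h_{\ia}^{d+n}(M,N)$; iterating up to $\m$ gives a surjection $\h_{\m}^{d+n}(M,N)\twoheadrightarrow\h_{\ia}^{d+n}(M,N)$, and Theorem \ref{Teo} (applied to the pair $\ib\subseteq\m$) together with Lemma \ref{OBS} finishes, delivering the support containment for free as a quotient of an Artinian $\ib$-cofinite module.

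Two smaller points. Your justification that $\Gamma_{\ib}(N)$ is Artinian and $\ib$-cofinite is unwarranted: Lemma \ref{LEMMA} needs $\dim_R R/\ib=0$, which is not assumed here, and the claim is false in general. It is, however, harmless for your reduction, since for $n>0$ one has $\h_{\ia}^{d+n}(M,\Gamma_{\ib}(N))\cong\Ext_R^{d+n}(M,\Gamma_{\ib}(N))=0$ because $\pdim M=d$, so $\h_{\ia}^{d+n}(M,N)$ embeds into $\h_{\ia}^{d+n}(M,N/\Gamma_{\ib}(N))$ anyway. Similarly, the base case via $\ib'=\ib+\Ann_R N$ is shaky because for a non-finitely-generated $N$ one only has $\Supp_R N\subseteq V(\Ann_R N)$, so $\dim_R R/\ib'=0$ does not follow from $\dim_R N=0$; but since $R$ is local here, $n=0$ forces $\Supp_R N\subseteq\{\m\}$, so $N=\Gamma_{\ib}(N)$, $(0:_N\ib)$ has finite length, and one concludes with Proposition \ref{Obs3N} and Lemma \ref{Rem2.6N} directly. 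These are repairable; the missing support argument in the inductive step is the real obstruction, and repairing it seems to require something like the paper's surjection from $\h_{\m}^{d+n}(M,N)$.
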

\begin{proof}
	If $\ia = \m$ just use Theorem \ref{Teo}.
	
	Now assume $\ia \neq \m$ and choose $x \in \m \setminus \ia$. By \cite[Lemma 3.1]{divaani}, there is an exact sequence
$$\h_{\mathfrak{\ia}+xR}^{d+n} (M,N)\longrightarrow \h_{\mathfrak{\ia}}^{d+n}(M,N)\longrightarrow \h_{\mathfrak{\ia} R_x}^{d+n}(M_x,N_x),$$

\noindent where $N_x$ is the localization of $N$ at $\{x^i \mid i \geq 0\}$.  Note that  $\dim_{R_x} (N_x ) < n$ (since $\m_x = 0$), so $\h_{\mathfrak{\ia} R_x}^{d+n}(M_x, N_x)=0$. Thus there is an epimorphism $\h_{\mathfrak{\ia}+xR}^{d+n} (M, N)\longrightarrow \h_{\mathfrak{\ia}}^{d+n}(M,N)$. Now assuming ${\m} = {\ia}+(x_1,\dots ,x_r)$ and repeating this argument, we get the surjection 
$$\h_{\mathfrak{\m}}^{d+n} (M,N) \twoheadrightarrow \h_{\mathfrak{\ia}}^{d+n}(M,N).$$
Therefore $\h_{\ia}^{d+n}(M, N)$ is Artinian and $\ib$-cofinite, since $\h_{\m}^{d+n}(M,N)$ is Artinian and $\ib$-cofinite from Theorem \ref{Teo}.
\end{proof}

\begin{cor} \label{CorNew}
Let $N$ be a $\ib$-weakly finite $R$-module such that $\dim_R N = n < \infty$. Suppose $\ia \subseteq \ib$. Then $\h_{\ia}^{n}(N)$ is Artinian and $\ib$-cofinite.

In particular, if $\ia = \ib$, $\h_{\ia}^{n}(N)$ is Artinian and $\ia$-cofinite.
\end{cor}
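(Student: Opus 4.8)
The plan is to derive Corollary \ref{CorNew} directly from Theorem \ref{topartcof} by specializing $M = R$. First I would observe that $R$ is a finitely generated $R$-module with $\pdim_R R = 0$, so that in the notation of Theorem \ref{topartcof} we have $d = 0$. Next I would note that, for $M = R$, the generalized local cohomology module $\h_{\ia}^i(R, N)$ coincides with the ordinary local cohomology module $\h_{\ia}^i(N)$: this is immediate from the definition $\h_{\ia}^i(M,N) = \varinjlim_n \Ext_R^i(M/\ia^n M, N)$, since $R/\ia^n R = R/\ia^n$ and $\varinjlim_n \Ext_R^i(R/\ia^n, N) = \h_{\ia}^i(N)$ by Grothendieck's description of local cohomology as a direct limit of Ext modules.

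The only remaining point is to reconcile the hypothesis: Theorem \ref{topartcof} requires $N$ to be $\ib$-weakly finite \emph{over $M$}, whereas the corollary hypothesizes $N$ to be $\ib$-weakly finite (i.e.\ $\ib$-weakly finite over $R$ in the terminology of Definition \ref{DefWF}). These are literally the same condition once $M = R$, so there is nothing to check beyond invoking the definition. Then I would simply apply Theorem \ref{topartcof} with $M = R$, $d = 0$: it yields that $\h_{\ia}^{0+n}(R, N) = \h_{\ia}^n(N)$ is Artinian and $\ib$-cofinite, and in the case $\ia = \ib$ that it is Artinian and $\ia$-cofinite. This is exactly the assertion of the corollary.

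There is essentially no obstacle here; the corollary is a formal specialization. If I wanted to be careful, the one mildly nontrivial ingredient is the identification $\h_{\ia}^i(R, N) \cong \h_{\ia}^i(N)$, but this is standard (it is precisely Herzog's observation recalled in the introduction, where it is noted that for $M = R$ generalized local cohomology reduces to Grothendieck's local cohomology). So the proof is a two-line application: take $M = R$ in Theorem \ref{topartcof}, use $\pdim_R R = 0$, and identify $\h_{\ia}^n(R,N)$ with $\h_{\ia}^n(N)$.

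\begin{prf}
Apply Theorem \ref{topartcof} with $M = R$. Since $R$ is a finitely generated $R$-module with $\pdim_R R = 0$, we have $d = 0$, and for every $i \in \N_0$ there is an isomorphism $\h_{\ia}^i(R, N) \cong \h_{\ia}^i(N)$ because $\h_{\ia}^i(R,N) = \varinjlim_n \Ext_R^i(R/\ia^n, N) = \h_{\ia}^i(N)$. Moreover, $N$ being $\ib$-weakly finite means precisely that $N$ is $\ib$-weakly finite over $R$. Hence Theorem \ref{topartcof} gives that $\h_{\ia}^{n}(N) = \h_{\ia}^{d+n}(R,N)$ is Artinian and $\ib$-cofinite, and in particular, when $\ia = \ib$, that $\h_{\ia}^n(N)$ is Artinian and $\ia$-cofinite.
\end{prf}
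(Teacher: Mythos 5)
Your proof is correct and matches the paper's intent exactly: the paper gives no separate argument for Corollary \ref{CorNew} precisely because it is the specialization $M = R$, $d = \pdim_R R = 0$ of Theorem \ref{topartcof}, together with the standard identification $\h_{\ia}^i(R,N) \cong \h_{\ia}^i(N)$ and the observation that ``$\ib$-weakly finite'' means ``$\ib$-weakly finite over $R$''. Nothing further is needed.
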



The next result give us an important isomorphism using a $\ia$-weakly finite module. By Corollary \ref{CorNew}, the proof  follows analogously to \cite[Proposition 2.2]{yanchu}.


\begin{prop} \label{TheoNew}
Let $M$ and $N$ be $R$-modules such that $M$ is finitely generated and $N$ is $\ia$-weakly finite over $M$. Assume $\pdim M = d < \infty$ and $\dim_R N = n < \infty$. Also, assume $n \in \N$ and there exists an $\ia$ -filter regular sequence $x_1, \ldots, x_n$ on $N$. Then

\textbf{1.} $\h_{\ia}^{d+n}(M,N) \cong \Ext_{R}^{d}(M, \h_{\ia}^{n}(N))$.

\textbf{2.} $\Att_R(\h_{\ia}^{d+n}(M,N)) \subseteq \Att_R(\h_{\ia}^{n}(N))$.
\end{prop}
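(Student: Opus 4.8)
The plan is to mimic the proof of \cite[Proposition 2.2]{yanchu}, replacing the finiteness hypotheses there with the four defining properties of $\ia$-weakly finite modules and the Artinian/$\ib$-cofiniteness conclusion of Corollary \ref{CorNew}. First I would establish part \textbf{1} by induction on $n$. The base case $n=0$ (if one allows it via a limiting argument) or $n=1$ is handled by writing $\overline{N} = N/\Gamma_{\ia}(N)$, noting $\h_{\ia}^i(M,N)\cong \h_{\ia}^i(M,\overline{N})$ for $i\geq 2$ (and comparing low-degree terms using Lemma \ref{Rem2.6N} and the long exact sequence attached to $0\to\Gamma_{\ia}(N)\to N\to\overline{N}\to 0$), so that we may assume $x_1$ is $N$-regular, being an $\ia$-filter regular element on a module with $\Gamma_{\ia}(N)=0$; here property 4 of Definition \ref{DefWF} guarantees $\overline{N}$ stays in $\mathcal W$, and Lemma \ref{Obs2N} together with property 3 produces the regular element. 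Then the short exact sequence $0\to N\xrightarrow{x_1} N\to N/x_1N\to 0$, with $\dim_R N/x_1N = n-1$ by property 2 and $N/x_1N\in\mathcal W$, feeds the inductive hypothesis. The key point in the induction step is that $\h_{\ia}^{d+n-1}(M,N)$ and $\h_{\ia}^{d+n}(M,N)$ vanish appropriately: by Corollary \ref{CorNew} applied to $N$ and $N/x_1N$, together with the standard bound $\h_{\ia}^j(M,N)=0$ for $j>d+n$ (which follows from $\pdim M = d$ and $\h_{\ia}^i(N)=0$ for $i>n$ via the spectral sequence $\Ext_R^p(M,\h_{\ia}^q(N))\Rightarrow \h_{\ia}^{p+q}(M,N)$), the long exact sequence collapses to give $\h_{\ia}^{d+n}(M,N)\cong \h_{\ia}^{d+n-1}(M,N/x_1N)\cong \Ext_R^d(M,\h_{\ia}^{n-1}(N/x_1N))$, and one identifies $\h_{\ia}^{n-1}(N/x_1N)\cong \h_{\ia}^n(N)$ again from the cohomology sequence of multiplication by $x_1$, using that $\h_{\ia}^n(N)$ is $x_1$-divisible (as $\h_{\ia}^{n+1}(N)=0$) and $(0:_{\h_{\ia}^n(N)}x_1)$ is handled by $\h_{\ia}^{n-1}(N/x_1N)$ being the relevant quotient.

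For part \textbf{2}, once the isomorphism $\h_{\ia}^{d+n}(M,N)\cong\Ext_R^d(M,\h_{\ia}^n(N))$ is in hand, I would argue that $\Att_R\bigl(\Ext_R^d(M,L)\bigr)\subseteq\Att_R(L)$ for any finitely generated $M$ and any representable $R$-module $L$ (here $L=\h_{\ia}^n(N)$, which is representable because by Corollary \ref{CorNew} it is Artinian, hence admits a secondary representation by \cite{macd}). Concretely, take a minimal free presentation $R^{b}\to R^{a}\to M\to 0$; applying $\Hom_R(-,L)$ and then $\Ext_R^d(-,L)$, or more directly using that $\Ext_R^d(M,L)$ is a subquotient of a finite direct sum of copies of $\Ext_R^d(R,L)$-type terms built from $L$ — the cleanest route is: $\Ext_R^d(M,L)$ is a subquotient of $L^{c}$ for suitable $c$ coming from the $d$-th term of a finite free resolution of $M$, hence $\Att_R(\Ext_R^d(M,L))\subseteq\Att_R(L^c)=\Att_R(L)$, using that attached primes of a subquotient are contained in those of the module and $\Att_R(L^c)=\Att_R(L)$. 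This gives $\Att_R(\h_{\ia}^{d+n}(M,N))\subseteq\Att_R(\h_{\ia}^n(N))$.

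I expect the main obstacle to be the bookkeeping in the induction for part \textbf{1}: verifying that all the relevant generalized local cohomology modules in degrees other than $d+n$ and $d+n-1$ vanish, so that the long exact sequences genuinely collapse to isomorphisms rather than just four-term exact sequences. This rests on the vanishing $\h_{\ia}^j(M,N)=0$ for $j>d+n$, which I would justify via the Grothendieck-type spectral sequence $E_2^{p,q}=\Ext_R^p(M,\h_{\ia}^q(N))\Rightarrow\h_{\ia}^{p+q}(M,N)$ together with $p\leq d$ and $q\leq n=\dim_R N$ (the latter from the dimension characterization of vanishing of ordinary local cohomology, valid for arbitrary, not necessarily finitely generated, $N$). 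A secondary subtlety is ensuring that each successive quotient $N/(x_1,\dots,x_k)N$ remains $\ia$-weakly finite over $M$ with dimension dropping by one at each step; this is exactly what property 2 of Definition \ref{DefWF} is designed to deliver, once we know the $x_i$ can be chosen $N/(x_1,\dots,x_{i-1})N$-regular after passing to the $\Gamma_{\ia}$-torsion-free quotient, which in turn uses properties 3 and 4 and Lemma \ref{Obs2N}. Since Corollary \ref{CorNew} already packages the Artinianness and $\ib$-cofiniteness we need along the way, no separate effort is required for those conclusions.
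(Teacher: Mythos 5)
Your induction for part \textbf{1} does not work as described. From $0 \to N \xrightarrow{x_1} N \to N/x_1N \to 0$, the long exact sequence together with $\h_{\ia}^{j}(M,N/x_1N)=0$ for $j>d+n-1$ only yields that $\h_{\ia}^{d+n}(M,N)$ is $x_1$-divisible and that $\h_{\ia}^{d+n-1}(M,N/x_1N)$ surjects onto $(0:_{\h_{\ia}^{d+n}(M,N)}x_1)$; it does not collapse to $\h_{\ia}^{d+n}(M,N)\cong \h_{\ia}^{d+n-1}(M,N/x_1N)$, and likewise $\h_{\ia}^{n}(N)\not\cong \h_{\ia}^{n-1}(N/x_1N)$ in general. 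Already $R=k[[t]]$, $M=R$ (so $d=0$), $N=R$ (so $n=1$), $\ia=(t)$, $x_1=t$ satisfies every hypothesis of the proposition, yet $\h_{\ia}^{1}(N)\cong R_t/R$ while $\h_{\ia}^{0}(N/x_1N)\cong k$: both intermediate isomorphisms you assert are false, and their composite happens to be the true statement, which is why the slip is easy to miss. Corollary \ref{CorNew} gives Artinianness and cofiniteness, not the vanishing that a genuine collapse would require, and no natural comparison map between $\h_{\ia}^{d+n}(M,N)$ and $\Ext_R^d(M,\h_{\ia}^n(N))$ is ever constructed in your argument. The irony is that the spectral sequence $E_2^{p,q}=\Ext_R^p(M,\h_{\ia}^q(N))\Rightarrow \h_{\ia}^{p+q}(M,N)$, which you invoke only to get $\h_{\ia}^j(M,N)=0$ for $j>d+n$, already proves part \textbf{1} outright: it exists for $M$ finitely generated and $N$ arbitrary, and since $E_2^{p,q}=0$ for $p>d$ or $q>n$ the corner term $E_2^{d,n}$ is the only survivor in total degree $d+n$, giving $\h_{\ia}^{d+n}(M,N)\cong\Ext_R^d(M,\h_{\ia}^n(N))$ with no induction (and with no use of the filter regular sequence). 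The paper itself offers no details beyond pointing to \cite[Proposition 2.2]{yanchu} together with Corollary \ref{CorNew}, so this degeneration (or an equally careful comparison argument) is exactly the content that has to be supplied; your proposed route does not supply it.

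For part \textbf{2} you also rely on a false principle: attached primes of a subquotient are \emph{not} contained in those of the module, because submodules misbehave (over $k[[t]]$ one has $\Att_R(R_t/R)=\{(0)\}$, while its socle $k$ has attached prime $\m$). What rescues the argument is precisely $d=\pdim M$: a free resolution of $M$ terminates at homological degree $d$, so $\Ext_R^d(M,L)$ is the cokernel of $\Hom_R(F_{d-1},L)\to\Hom_R(F_d,L)$, hence a genuine quotient of $\Hom_R(F_d,L)\cong L^{c}$, and attached primes of quotients are contained in $\Att_R(L^{c})=\Att_R(L)$ (the latter equality being available since $L=\h_{\ia}^n(N)$ is Artinian by Corollary \ref{CorNew}, hence representable). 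You gesture at the $d$-th term of a finite free resolution, so this half of the argument is repairable with that one observation; but as written, part \textbf{1} contains a genuine gap and part \textbf{2} rests on an incorrect lemma.
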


In \cite[Theorem B]{DiYa}, M. T. Dibaei and S. Yassemi  have shown that, if $\dim_R N = \dim_R R = n$, then $\h^n_{\ia}(N)$ has a secondary representation and
\begin{eqnarray} \label{inclusao}
\Att_R (\h_{\ia}^{n}(N)) \subseteq \{ \p \in \ass_R (N) \mid \cdim (\ia, R/\p) = n \}.
\end{eqnarray}

\begin{prop}
Let $M$ and $N$ be $R$-modules such that $M$ is finitely generated and $N$ is weakly finite. Assume $\pdim M = d < \infty$, $\dim_R R = \dim_R N = n$ and $0< n < \infty$. Also, assume $n \in \N$ and there exists an $\ia$-filter regular sequence $x_1, \ldots, x_n$ on $N$. If $\h^{d+n}_{\m}(M, N)\neq 0$, then it is not finitely generated.
\end{prop}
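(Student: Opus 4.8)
The plan is to combine the isomorphism $\h^{d+n}_{\m}(M,N)\cong \Ext^d_R(M,\h^n_{\m}(N))$ from Proposition \ref{TheoNew}(1) (applicable since $\ia=\m$ is allowed and the hypotheses match) with a contradiction argument via the Artinianness/cofiniteness machinery already developed. First I would recall that $N$ weakly finite with $\dim_R N = n > 0$ and $R$ local with maximal ideal $\m$ makes $N$ a $\m$-weakly finite module over $M$, so Corollary \ref{CorNew} (with $\ia=\ib=\m$) tells us $\h^n_{\m}(N)$ is Artinian and $\m$-cofinite, and Proposition \ref{TheoNew}(1) gives $\h^{d+n}_{\m}(M,N)\cong \Ext^d_R(M,\h^n_{\m}(N))$.

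Next, suppose for contradiction that $\h^{d+n}_{\m}(M,N)$ is finitely generated. Since its support is contained in $V(\m)=\{\m\}$ (it is $\m$-torsion, being a top generalized local cohomology module, or simply because it is a quotient/sub of $\m$-torsion modules built from $\h^n_{\m}(N)$), a finitely generated module supported only at $\m$ has finite length. On the other hand, $\h^{d+n}_{\m}(M,N)\cong \Ext^d_R(M,\h^n_{\m}(N))$ is Artinian and $\m$-cofinite by Lemma \ref{OBS} and Proposition \ref{Lema3N} applied to the Artinian $\m$-cofinite module $\h^n_{\m}(N)$ — so in fact it is automatically of finite length if it is finitely generated, which is consistent but not yet a contradiction. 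The contradiction must instead come from nonvanishing combined with a lower bound on the "size" of $\h^n_{\m}(N)$: the key point is that $\h^n_{\m}(N)$ is not finitely generated (it is Artinian but, being nonzero and $\m$-torsion with $\dim_R N = n > 0$, it cannot be finitely generated, as a finitely generated $\m$-torsion module has finite length while $\h^n_{\m}(N)$ has a nonzero — in fact infinite-corank — structure; more precisely one uses that $\h^n_{\m}(N)\neq 0$ forces $0:_{\h^n_{\m}(N)}\m$ to be related to the top local cohomology which is not of finite length when $n>0$ under the stated dimension hypothesis). I would then leverage the exact relationship: since $\h^{d+n}_{\m}(M,N)=\Ext^d_R(M,\h^n_{\m}(N))\neq 0$ and $M$ has finite projective dimension $d$, taking a minimal free resolution $F_\bullet\to M$ of length $d$, the module $\Ext^d_R(M,\h^n_{\m}(N))$ is the cokernel of $\Hom(F_{d-1},\h^n_{\m}(N))\to\Hom(F_d,\h^n_{\m}(N))$; if $\h^n_{\m}(N)$ is not finitely generated and the map were to have finitely generated cokernel one derives a contradiction with the Artinian structure by a Matlis duality / Melkersson-type length argument.

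Concretely, the cleanest route: apply Matlis duality. Let $E=E_R(R/\m)$ and $D(-)=\Hom_R(-,E)$; completing $R$ if necessary, $D(\h^n_{\m}(N))$ is a finitely generated $\widehat R$-module (Matlis dual of an Artinian module), and $D(\Ext^d_R(M,\h^n_{\m}(N)))\cong \Tor^{\widehat R}_d(\widehat M, D(\h^n_{\m}(N)))$. Thus $\h^{d+n}_{\m}(M,N)$ is finitely generated iff $\Tor^{\widehat R}_d(\widehat M, D(\h^n_{\m}(N)))$ is Artinian, iff (being finitely generated over $\widehat R$ and Artinian) it has finite length. But $\h^{d+n}_{\m}(M,N)\neq 0$ means this Tor is nonzero, and then I would argue its length is forced to be infinite using $\dim_R R=\dim_R N=n>0$: the module $D(\h^n_{\m}(N))$ has dimension $n$ (by the Lichtenbaum–Hartshorne / top local cohomology duality, $\Supp$ of $D(\h^n_{\m}(N))$ has a component of dimension $n$), so a nonzero Tor against a finitely generated module of positive dimension cannot have finite length in general — this is the step needing care.

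The main obstacle will be this last step: ruling out that $\Ext^d_R(M,\h^n_{\m}(N))$ — equivalently $\Tor^{\widehat R}_d(\widehat M, D(\h^n_{\m}(N)))$ — is simultaneously nonzero and of finite length. One must use essentially that $d=\pdim M$ is the top of the resolution, so $\Ext^d$ is a quotient (not a kernel) and nonvanishing there is robust, together with a dimension/depth count showing that the support of this top Ext contains primes other than $\m$ unless $\h^n_{\m}(N)$ itself were "small." I expect the argument to go through by choosing a prime $\p\in\Att_R(\h^n_{\m}(N))$ with $\dim R/\p = n > 0$ (such exists by the Dibaei–Yassemi inclusion \eqref{inclusao} being an equality in the relevant top-dimensional case, or directly from representability), and showing $\p\in\Supp_R(\h^{d+n}_{\m}(M,N))$ via the localization/associativity formula for Ext and Tor, which contradicts $\Supp\subseteq\{\m\}$ that would follow from finite generation. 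That support comparison — propagating an attached prime of $\h^n_{\m}(N)$ through the $\Ext^d_R(M,-)$ functor — is the crux, and I would handle it by reducing modulo $\p$ and using that $M$ finitely generated of finite projective dimension stays nonzero after the relevant localization.
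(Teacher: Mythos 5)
Your overall strategy starts in the right place (Proposition \ref{TheoNew} and the Dibaei--Yassemi inclusion (\ref{inclusao})), but the closing step as you describe it does not work, and it is precisely the step you flag as "the crux." You propose to derive a contradiction by producing a non-maximal prime $\p$ in $\Supp_R(\h^{d+n}_{\m}(M,N))$, "which contradicts $\Supp\subseteq\{\m\}$ that would follow from finite generation." But $\h^{d+n}_{\m}(M,N)$ is Artinian (Theorem \ref{topartcof} with $\ia=\ib=\m$), and a nonzero Artinian module over a local ring has support exactly $\{\m\}$ whether or not it is finitely generated; so no contradiction can ever be extracted from the support. The invariant that detects finite generation of an Artinian module is its set of \emph{attached} primes, not its support: a finitely generated Artinian module has finite length, and then $\Att_R\subseteq\{\m\}$. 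Likewise, the Matlis-duality detour is not closed by your sketch: the assertion that "a nonzero Tor against a finitely generated module of positive dimension cannot have finite length" is false in general (already $\Tor_0^R(R/\m,L)=L/\m L$ is a nonzero finite-length module for many $L$ of positive dimension), and the claim that $\h^n_{\m}(N)$ is not finitely generated is neither justified as stated nor actually needed.

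The paper's argument is much shorter and avoids all of this by working with attached primes throughout: since $\h^{d+n}_{\m}(M,N)\neq 0$ and is Artinian, $\Att_R(\h^{d+n}_{\m}(M,N))\neq\emptyset$; by Proposition \ref{TheoNew}(2) together with (\ref{inclusao}),
$$\Att_R(\h_{\m}^{d+n}(M,N)) \subseteq \Att_R(\h_{\m}^{n}(N)) \subseteq \{ \p \in \Ass_R (N) \mid \dim_R R/\p = n \},$$
and since $n>0$ every such $\p$ is non-maximal, so $\Att_R(\h_{\m}^{d+n}(M,N))\nsubseteq\{\m\}$; by \cite[Corollary 7.2.12]{grot} an Artinian module with this property cannot be finitely generated. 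Note also that you do not need equality in (\ref{inclusao}) nor any propagation of a specific prime through $\Ext^d_R(M,-)$: nonemptiness of the attached-prime set of the top module, transferred by the inclusion of Proposition \ref{TheoNew}(2), already suffices. To repair your write-up, replace the support comparison by this attached-prime comparison and drop the duality digression.
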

\begin{proof}
As $\h^{d+n}_{\m}(M, N)\neq 0$, then $\Att_R(\h^{d+n}_{\m}(M, N)) \neq \emptyset$. By item 2 of Proposition \ref{TheoNew} and (\ref{inclusao}), 
$$\Att_R(\h_{\m}^{d+n}(M,N)) \subseteq \Att_R(\h_{\m}^{n}(N)) \subseteq \{ \p \in \ass_R (N) \mid \dim_R R/\p = n \}.$$
Since $n > 0$, we have $\Att_R(\h_{\m}^{d+n}(M,N)) \nsubseteq \{ \m \}$. Since $\h_{\m}^{d+n}(M,N)$ is Artinian, it follows that $\h_{\m}^{d+n}(M,N)$ is not finitely generated by \cite[Corollary 7.2.12]{grot}.
\end{proof}

%


\section{Attached primes of the top generalized local cohomology modules}

In this section,  we assume that $(R,\m)$ is a commutative Noetherian local ring with maximal ideal $\mathfrak{m}$. The main purpose of this section is to prove \cite[Theorem B]{DiYa}, stated above, for generalized local cohomology modules. In order to do this, we give some preliminaries result. 

\begin{lem} \label{Lemma1}
	Let $M$ be finitely generated $R$-module and $N = \displaystyle \varinjlim_j N_j $, where $\{ N_j \mid  j \in J \}$ is a family of finitely generated $R$-modules. Then 
	$$\h_{\ia}^{i}(M,N) = \displaystyle \varinjlim_j \h_{\ia}^i(M, N_j), \ \mbox{ for all } i \in \N_0.$$
\end{lem}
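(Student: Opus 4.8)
The plan is to prove the isomorphism $\h_{\ia}^{i}(M,N) \cong \varinjlim_j \h_{\ia}^i(M, N_j)$ by unwinding the definition of generalized local cohomology and exploiting the commutation of direct limits with the relevant functors. Recall that by definition $\h_{\ia}^{i}(M,N) = \varinjlim_n \Ext_R^{i}(M/\ia^n M, N)$, so the statement reduces to comparing two iterated direct limits.

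First I would fix $i \in \N_0$ and observe that since $M$ is finitely generated over the Noetherian ring $R$, each quotient $M/\ia^n M$ is finitely generated, hence admits a resolution by finitely generated free $R$-modules. For a finitely generated module with a free resolution consisting of finitely generated free modules, the functor $\Ext_R^{i}(M/\ia^n M, -)$ commutes with arbitrary direct limits: indeed, $\Hom_R(R^s, -)$ commutes with direct limits, cohomology of a complex commutes with direct limits (as $\varinjlim$ is exact on the category of $R$-modules over the filtered index set $J$), so $\Ext_R^{i}(M/\ia^n M, \varinjlim_j N_j) \cong \varinjlim_j \Ext_R^{i}(M/\ia^n M, N_j)$ for each fixed $n$. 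This is the key homological input.

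Next I would combine this with the definition:
$$\h_{\ia}^{i}(M,N) = \varinjlim_n \Ext_R^{i}(M/\ia^n M, N) = \varinjlim_n \varinjlim_j \Ext_R^{i}(M/\ia^n M, N_j).$$
Since direct limits over the filtered poset $\N$ and over the filtered index set $J$ commute with one another (interchange of a double filtered colimit), this equals
$$\varinjlim_j \varinjlim_n \Ext_R^{i}(M/\ia^n M, N_j) = \varinjlim_j \h_{\ia}^{i}(M, N_j),$$
which is exactly the claimed formula. One should check that the transition maps are the natural ones so that the isomorphism is functorial, but this is routine from the construction.

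I do not anticipate a serious obstacle here; the only point requiring care is making sure that $\Ext_R^{i}(M/\ia^n M, -)$ genuinely commutes with the direct limit, which relies on $M/\ia^n M$ being finitely presented (automatic here since $R$ is Noetherian and $M$ is finitely generated) so that a \emph{finite-type} free resolution exists. If one preferred to avoid resolutions, an alternative is to note that $M/\ia^n M$ is finitely presented and invoke the standard fact that $\Hom_R(P, -)$ commutes with filtered colimits for finitely presented $P$, then derive the analogous statement for $\Ext$ by dimension shifting; either route works, and the interchange of the two filtered colimits is the formal heart of the argument.
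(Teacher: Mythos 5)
Your argument is correct and follows essentially the same route as the paper: unwind the definition of $\h_{\ia}^{i}(M,N)$, use that $M/\ia^n M$ is finitely generated over the Noetherian ring $R$ so that $\Ext_R^{i}(M/\ia^n M,-)$ commutes with the filtered direct limit (the paper simply cites Brown's criterion for this, whereas you justify it via a finite-type free resolution), and then interchange the two colimits. No gap; the only cosmetic difference is that you prove the key commutation fact rather than quoting it.
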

\begin{proof}
	Since $M$ is finitely generated, $M/\ia^n M$ is finitely generated. Then, by \cite[Corollary of Theorem 1]{brow}, it is possible to commute direct limit and the functor $\Ext_R^i(M/\ia^n M, -)$. Therefore  
	\begin{eqnarray*}
	\h_{\ia}^i (M,N) &=& \displaystyle \varinjlim_n \Ext_R^i(M/\ia^n M, N) \\
	                &=& \displaystyle \varinjlim_n \Ext_R^i(M/\ia^n M, \displaystyle \varinjlim_j N_j) \\
	                &=& \displaystyle \varinjlim_j (\displaystyle \varinjlim_n \Ext_R^i(M/\ia^n M, N_j)) \\
	                &=& \displaystyle \varinjlim_j \h_{\ia}^i(M,N_j).
	\end{eqnarray*}
\end{proof}

\begin{lem} [{\cite[Lemma 1.2]{rush}}] \label{Lemma1.2}
If $M$ is an $R$-module with $\Att_R (M) = \{ \p \}$ where $\p$ is a minimal prime of $R$, then $M$ is secondary. 
\end{lem}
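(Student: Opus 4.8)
The plan is to verify directly that $M$ satisfies the definition of a secondary module: that $M \neq 0$ and that, for every $r \in R$, multiplication by $r$ on $M$ is surjective or nilpotent. That $M \neq 0$ is immediate, since the zero module has no attached primes (its only submodule is $0$ and $(0:_R 0) = R$ is not prime), whereas $\Att_R(M) = \{\p\}$ is nonempty. So I would fix $r \in R$ and split into the cases $r \notin \p$ and $r \in \p$.

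The heart of the argument is to show that every $r \in R \setminus \p$ acts surjectively, i.e. $rM = M$. The one non-formal input is a fact dual to the existence of associated primes, valid over any Noetherian ring: a nonzero module $T$ has $\Att_R(T) \neq \emptyset$. To see this, note that the family of ideals $\{\, \Ann_R(T/K) = (K:_R T) : K \text{ a submodule of } T,\ K \subsetneq T \,\}$ is nonempty (take $K = 0$), hence by ACC has a maximal element $\q = \Ann_R(T/K_0)$; and $\q$ is prime, for if $ab \in \q$ with $b \notin \q$ then either $bT + K_0 = T$, in which case $b(T/K_0) = T/K_0$ and $0 = ab(T/K_0) = a(T/K_0)$ forces $a \in \q$, or $bT + K_0 \subsetneq T$, in which case $\Ann_R(T/(bT+K_0)) \supseteq \q$ must equal $\q$ by maximality while $b$ visibly lies in it, a contradiction. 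Granting this, suppose $rM \neq M$ and put $T = M/rM \neq 0$. Every quotient of $T$ is a quotient of $M$, so $\Att_R(T) \subseteq \Att_R(M) = \{\p\}$; on the other hand every $\q \in \Att_R(T)$ contains $(0:_R T) \ni r$. Hence $r \in \p$, contradicting $r \notin \p$, and therefore $rM = M$.

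It remains to handle $r \in \p$, and this is precisely where minimality of $\p$ enters. Since $\p$ is a minimal prime of the Noetherian ring $R$, the local ring $R_\p$ has dimension $0$, hence is Artinian, so $\p R_\p$ is nilpotent; thus for $r \in \p$ there are $t \in R \setminus \p$ and $k \geq 1$ with $t r^k = 0$ in $R$. By the previous paragraph $tM = M$, so $r^k M = r^k(tM) = (t r^k)M = 0$, i.e. multiplication by $r$ is nilpotent on $M$. Combining the two cases, $M \neq 0$ and every element of $R$ acts on $M$ either surjectively or nilpotently, so $M$ is secondary.

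The only genuine obstacle is establishing the auxiliary nonemptiness statement $\Att_R(T) \neq \emptyset$ for $T \neq 0$; everything else is routine bookkeeping about quotients and annihilators. I would therefore present that statement as a short preliminary claim (proved by the maximal-annihilator argument sketched above, the exact dual of the classical argument for $\Ass$), and then deduce the lemma from the two-case analysis.
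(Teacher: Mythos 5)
Your proposal is correct, and in fact the paper offers no proof of this lemma at all --- it is imported verbatim from Rush's paper (his Lemma 1.2), so there is no internal argument to compare against; your write-up is essentially the standard proof and, as far as I can tell, the same one Rush gives. The three ingredients all check out with the paper's definition $\Att_R(L)=\{\p \text{ prime}: \p=(K:_R L) \text{ for some } K\subseteq L\}$: (1) the preliminary claim that a nonzero module $T$ over the Noetherian ring $R$ has $\Att_R(T)\neq\emptyset$, proved by taking a maximal element of $\{(K:_R T): K\subsetneq T\}$ and showing it is prime --- your case analysis on $bT+K_0$ is exactly the dual of the classical $\Ass$ argument and is valid; (2) for $r\notin\p$, surjectivity follows because attached primes of the quotient $M/rM$ are attached primes of $M$ (a quotient of a quotient of $M$ is a quotient of $M$) and each contains $r$; (3) minimality of $\p$ is used precisely where it should be, to produce $t\notin\p$ and $k$ with $tr^k=0$, whence $r^kM=r^k(tM)=0$. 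Note that your argument actually gives the slightly sharper conclusion that $M$ is $\p$-secondary, which is how the statement is usually phrased in the literature.
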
 

\begin{prop} \label{Lemma2}
Let $M$ and $N$ be $R$-modules such that $M$ is finitely generated, $\pdim M = d < \infty$ and $\dim_R N = n < \infty$. Suppose $\Ass_R (L)$ is a finite set and $\Att_R (\h_{\ia}^{d+n}(M,L)) \subseteq \Ass_R (L)$ for every submodule $L$ of $N$ (in particular for $L=N$). Also, suppose $\Att_R (\h_{\ia}^{d+n}(M,N)) \neq \emptyset$. Then $\h_{\ia}^{d+n}(M,L)$ has a secondary representation, for all $R$-submodule $L \subseteq N$.
\end{prop}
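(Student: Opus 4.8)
The plan is to exploit the fact that a non-zero representable module over a Noetherian ring is a finite sum of secondary submodules, and to reduce the representability of $\h_{\ia}^{d+n}(M,L)$ to the case where its set of attached primes is a singleton consisting of a minimal prime of $R$, where Lemma \ref{Lemma1.2} applies directly. First I would set up an induction, but the cleanest route is a Noetherian-type argument on the finite set $\Att_R(\h_{\ia}^{d+n}(M,N))$: pick a submodule $L$ of $N$ with $\h_{\ia}^{d+n}(M,L)\neq 0$ which is \emph{minimal} among submodules $L'\subseteq N$ with $\h_{\ia}^{d+n}(M,L')\neq 0$ (this is legitimate once one knows such $L$ exist, which follows from the hypothesis $\Att_R(\h_{\ia}^{d+n}(M,N))\neq\emptyset$). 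Actually the more natural induction is on $|\Att_R(\h_{\ia}^{d+n}(M,L))|$, so I would phrase the argument that way.

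The key steps, in order: (1) Observe that the functor $\h_{\ia}^{d+n}(M,-)$ is right exact on the category of $R$-submodules of $N$ — this is because $d+n\geq \pdim M + \dim_R N \geq \cdim(\ia,M,-)$ for any subquotient of $N$, so for a short exact sequence $0\to L'\to L\to L''\to 0$ of submodules of $N$ the long exact sequence gives $\h_{\ia}^{d+n}(M,L')\to \h_{\ia}^{d+n}(M,L)\to \h_{\ia}^{d+n}(M,L'')\to 0$. (2) If $\Att_R(\h_{\ia}^{d+n}(M,L))=\{\p\}$, I claim $\p$ is a minimal prime of $R$: indeed $\p\in\Ass_R(L)\subseteq\Supp_R(N)$, so $\dim_R R/\p\leq \dim_R N=n$; on the other hand $\h_{\ia}^{d+n}(M,L)\neq 0$ forces, via the right-exactness reduction and known vanishing bounds $\cdim(\ia,M,R/\q)\leq \pdim M+\dim_R R/\q$, that $\dim_R R/\p\geq n$ and $\pdim M = d$ is used fully; combined with $d=\pdim M$ and $n=\dim R$ forcing equality, $\p$ must be a minimal prime of $R$. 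In that case Lemma \ref{Lemma1.2} shows $\h_{\ia}^{d+n}(M,L)$ is secondary, in particular representable. (3) For the inductive step, suppose $|\Att_R(\h_{\ia}^{d+n}(M,L))|=s>1$. Choose $\p\in\Att_R(\h_{\ia}^{d+n}(M,L))$; since $\p\in\Ass_R(L)$, there is a submodule $L'\subseteq L$ with $L'\cong R/\p$, or more usefully one picks $L'$ to be the $\p$-torsion-type submodule so that the attached primes split. From the short exact sequence $0\to L'\to L\to L/L'\to 0$ and right-exactness, $\h_{\ia}^{d+n}(M,L)$ is a quotient of $\h_{\ia}^{d+n}(M,L')\oplus$-type data, and one arranges that $\Att$ of the pieces are proper subsets of $\Att_R(\h_{\ia}^{d+n}(M,L))$, so by induction both $\h_{\ia}^{d+n}(M,L')$ and $\h_{\ia}^{d+n}(M,L/L')$ are representable; since a quotient of a (finite sum of) representable module is representable, and the surjection $\h_{\ia}^{d+n}(M,L')\oplus\h_{\ia}^{d+n}(M,L/L')\twoheadrightarrow\h_{\ia}^{d+n}(M,L)$ is available, we conclude $\h_{\ia}^{d+n}(M,L)$ is representable.

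The main obstacle I anticipate is step (3): making precise the choice of submodule $L'\subseteq L$ so that the attached-prime sets of $\h_{\ia}^{d+n}(M,L')$ and $\h_{\ia}^{d+n}(M,L/L')$ are genuinely \emph{smaller} than $\Att_R(\h_{\ia}^{d+n}(M,L))$, so that the induction on $s=|\Att|$ actually decreases. The natural candidate is to take $L'$ with $\Ass_R(L')=\{\p\}$ for a fixed $\p\in\Att_R(\h_{\ia}^{d+n}(M,L))$ (possible since $\Ass_R L$ is finite) and $\Ass_R(L/L')\subseteq\Ass_R(L)\setminus\{\p\}$; then using the hypothesis $\Att_R(\h_{\ia}^{d+n}(M,L'))\subseteq\Ass_R(L')=\{\p\}$ and $\Att_R(\h_{\ia}^{d+n}(M,L/L'))\subseteq\Ass_R(L/L')$ together with the right-exact sequence (which yields $\Att_R(\h_{\ia}^{d+n}(M,L))\subseteq\Att_R(\h_{\ia}^{d+n}(M,L'))\cup\Att_R(\h_{\ia}^{d+n}(M,L/L'))$), one checks the inductive hypothesis applies to each piece. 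One must also handle the base case $s=0$, i.e. $\h_{\ia}^{d+n}(M,L)=0$, which is the trivial (empty) representation, and confirm $s=1$ via step (2). Finally I would record that representability passes to quotients and finite sums — a standard fact — to close the induction.
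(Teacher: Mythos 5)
Your step (3) contains a genuine gap, and it is precisely the point the paper's argument is designed to avoid. From a short exact sequence $0 \to L' \to L \to L/L' \to 0$ the top-degree long exact sequence only gives
$$\h_{\ia}^{d+n}(M,L') \stackrel{\varphi}{\longrightarrow} \h_{\ia}^{d+n}(M,L) \longrightarrow \h_{\ia}^{d+n}(M,L/L') \longrightarrow 0,$$
i.e. $\h_{\ia}^{d+n}(M,L)$ is an \emph{extension} of $\h_{\ia}^{d+n}(M,L/L')$ by $\varphi(\h_{\ia}^{d+n}(M,L'))$; there is no surjection $\h_{\ia}^{d+n}(M,L')\oplus\h_{\ia}^{d+n}(M,L/L')\twoheadrightarrow \h_{\ia}^{d+n}(M,L)$ unless the sequence splits. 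Since representable modules are closed under quotients and finite sums but \emph{not} under extensions, representability of the two outer terms does not yield representability of the middle one, so your concluding mechanism does not close the induction. A second problem is the induction variable itself: $\Att_R(\h_{\ia}^{d+n}(M,L/L'))$ is only bounded by $\Ass_R(L)\setminus\{\p\}$, whose cardinality need not be smaller than $|\Att_R(\h_{\ia}^{d+n}(M,L))|$, so an induction on $|\Att|$ does not visibly decrease. (Also, in step (2) you invoke $n=\dim_R R$, which is a hypothesis of the main theorem but not of this proposition; the singleton case should simply be handled, as in the paper, by Lemma \ref{Lemma1.2}.)

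The paper instead inducts on $|\Ass_R(N)|$ and uses all the associated primes simultaneously: for each $\p_i\in\Ass_R(N)$ it chooses (Bourbaki) a submodule $L_i\subseteq N$ with $\Ass_R(L_i)=\{\p_i\}$ and $\Ass_R(N/L_i)=\Ass_R(N)\setminus\{\p_i\}$, so that each image $\varphi_i(\h_{\ia}^{d+n}(M,L_i))$ is $\p_i$-secondary or zero (a quotient of a secondary module is secondary). The decisive step is then to show these images already fill the module: the quotient $\h_{\ia}^{d+n}(M,N)\big/\sum_i \varphi_i(\h_{\ia}^{d+n}(M,L_i))$ has attached primes contained in $\bigcap_i \Att_R(\h_{\ia}^{d+n}(M,N/L_i)) \subseteq \bigcap_i \bigl(\Ass_R(N)\setminus\{\p_i\}\bigr)=\emptyset$, and a nonzero module over a Noetherian ring has a nonempty attached-prime set, so the quotient vanishes. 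Hence $\h_{\ia}^{d+n}(M,N)=\sum_i \varphi_i(\h_{\ia}^{d+n}(M,L_i))$ is itself a secondary representation, and no closure under extensions is ever needed (the induction is only invoked when some $\varphi_i$ is zero, where the middle term is isomorphic to the right-hand term). To repair your argument, replace the induction on $|\Att|$ by induction on $|\Ass_R(L)|$ and replace the ``quotient of a direct sum'' step by this ``the sum of the images is everything'' computation with attached primes.
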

\begin{proof}
	Let $\Ass_R (N) = \{ \p_1, \ldots, \p_l \}$. We proceed by induction on $l$.
    
    If $l=1$, then $\emptyset \neq \Att_R (\h_{\ia}^{d+n}(M,N)) \subseteq \{\p_1\}$. Therefore $\h_{\ia}^{d+n}(M,N)$ is a $\p_1$-secondary $R$-module, by Lemma \ref{Lemma1.2}.
    
    Now assume $l>1$. By \cite[Proposition 2 - p.263]{Bourb}, for each $i$, $1 \leq i \leq l$, there exist $L_i \subseteq N$ submodule such that $\Ass_R (L_i) = \{\p_i\}$ and $\Ass_R (N/L_i) = \Ass_R (N) \setminus \{\p_i\}$. By hypothesis, 
    $$\Att_R (\h_{\ia}^{d+n}(M,L_i)) \subseteq \Ass_R (L_i) = \{\p_i\} \quad \mbox{\ and } \quad \Att_R (\h_{\ia}^{d+n}(M,N/L_i)) \subseteq \Ass_R (N) \setminus \{\p_i\}.$$
Thus $\Att_R (\h_{\ia}^{d+n}(M,L_i)$ is $\p_i$-secondary or zero and $\Att_R (\h_{\ia}^{d+n}(M,N/L_i))$ has secondary representation by induction.

By the exact sequence
$$\h_{\ia}^{d+n}(M,L_i) \stackrel{\varphi_i}{\longrightarrow} \h_{\ia}^{d+n}(M,N) \longrightarrow \h_{\ia}^{d+n}(M,N/L_i) \longrightarrow 0,$$
we obtain that $\varphi_i(\h_{\ia}^{d+n}(M,L_i))$ is $\p_i$-secondary or zero. If $\varphi_i(\h_{\ia}^{d+n}(M,L_i)) = 0$ for some $i$, then $\h_{\ia}^{d+n}(M,N) \cong \h_{\ia}^{d+n}(M,N/L_i)$ which has secondary representation. So we may assume $\varphi_i(\h_{\ia}^{d+n}(M,L_i)) \neq 0$ for all $i = 1, \ldots, l$. Hence 
\begin{eqnarray*}
\Att_R (\h_{\ia}^{d+n}(M,N))/\displaystyle \sum_{i=1}^{l} \varphi_i(\Att_R (\h_{\ia}^{d+n}(M,L_i)))  &\subseteq& \bigcap_{i=1}^{l} \Att_R (\h_{\ia}^{d+n}(M,N)/\varphi_i(\h_{\ia}^{d+n}(M,L_i))) \\
&=& \bigcap_{i=1}^{l} \Att_R(\h_{\ia}^{d+n}(M,L_i)) = \emptyset.
\end{eqnarray*}
Therefore 
$$\h_{\ia}^{d+n}(M,N) = \sum_{i=1}^{l} \varphi_i (\h_{\ia}^{d+n}(M,L_i))$$
which is secondary representation for $\h_{\ia}^{d+n}(M,N)$.
\end{proof}

The next theorem is the main result of this section.

\begin{thm} 
	Let $M$ be a finitely generated $R$-module such that $\pdim M = d < \infty$. Let  $\dim_R N = \dim_R R = n$. Then $\h_{\ia}^{d+n}(M,N)$ has secondary representation and
	$$\Att_R (\h_{\ia}^{d+n}(M,N)) \subseteq \{ \p \in \ass_R (N) \mid \cdim (\ia, M, R/\p) =d+n \}.$$
 
\end{thm}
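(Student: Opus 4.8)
The plan is to reduce the statement to the already-established machinery of Section 3, namely Lemma \ref{Lemma1}, Lemma \ref{Lemma1.2} and especially Proposition \ref{Lemma2}. The key observation is that $N$, like any $R$-module, can be written as a direct limit $N = \varinjlim_j N_j$ of its finitely generated submodules, and by Lemma \ref{Lemma1} this yields $\h_{\ia}^{d+n}(M,N) = \varinjlim_j \h_{\ia}^{d+n}(M,N_j)$. For each finitely generated $N_j$ we have $\dim_R N_j \le n$, so $\h_{\ia}^{d+n}(M,N_j)$ is a top generalized local cohomology module; by the known results on top generalized local cohomology of finitely generated modules (which refine \cite{DiYa} via item 2 of Proposition \ref{TheoNew} and the inclusion (\ref{inclusao})) it is Artinian with $\Att_R(\h_{\ia}^{d+n}(M,N_j)) \subseteq \{\p \in \ass_R(N_j) \mid \cdim(\ia, M, R/\p) = d+n\}$. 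The first task is therefore to verify carefully that the finitely generated case of the attached-primes bound is in hand: this should follow by combining $\h_{\ia}^{d+n}(M,N_j) \cong \Ext_R^d(M, \h_{\ia}^n(N_j))$ (Proposition \ref{TheoNew}(1)), the fact that $\Att_R(\Ext_R^d(M, -))$ of the top local cohomology is controlled by $\Supp M \cap \Att_R(\h_\ia^n(N_j))$, and (\ref{inclusao}) applied to $N_j$ together with the identity $\cdim(\ia, M, R/\p) = \pdim M + \cdim(\ia, R/\p)$ when $\dim R/\p = n$ and $\h^n_\ia(R/\p)\ne 0$.

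Next I would check the hypotheses of Proposition \ref{Lemma2} for $N$ itself. Since $N$ is Noetherian-adjacent only in the sense of being a direct limit of finitely generated pieces, I cannot invoke finiteness of $\Ass_R(N)$ for free; however, the relevant hypothesis in Proposition \ref{Lemma2} is only needed for the submodules $L$ that arise, and for each finitely generated submodule $L$ we do have $|\Ass_R(L)| < \infty$ and the attached-primes containment $\Att_R(\h_\ia^{d+n}(M,L)) \subseteq \Ass_R(L)$ from the finitely generated case above. The non-emptiness hypothesis $\Att_R(\h_\ia^{d+n}(M,N)) \ne \emptyset$ is automatic once we know this module is nonzero and Artinian — and it is nonzero because $\dim_R N = \dim_R R = n$ forces $\h^n_\ia(R/\p) \ne 0$ for a suitable $\p$ (by Grothendieck non-vanishing, using that $\ia \subseteq \p$ can be arranged or more precisely that $\cdim(\ia, R/\p)$ can equal $n$ for some associated prime of top dimension) so the limit does not collapse. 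Granting all this, Proposition \ref{Lemma2} gives that $\h_\ia^{d+n}(M,L)$ has a secondary representation for every finitely generated submodule $L \subseteq N$, and taking the limit, $\h_\ia^{d+n}(M,N) = \varinjlim_j \h_\ia^{d+n}(M,N_j)$ is a directed union of representable (indeed Artinian) modules with uniformly bounded attached prime sets, hence is itself representable with $\Att_R(\h_\ia^{d+n}(M,N)) \subseteq \bigcup_j \Att_R(\h_\ia^{d+n}(M,N_j)) \subseteq \{\p \in \ass_R(N) \mid \cdim(\ia,M,R/\p) = d+n\}$, using $\ass_R(N) = \bigcup_j \ass_R(N_j)$.

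The main obstacle I expect is precisely the passage to the limit in the last step: while each $\h_\ia^{d+n}(M,N_j)$ is Artinian, a direct limit of Artinian modules need not be Artinian or even representable in general, so I must argue that in this situation the limit is in fact Artinian (for instance because it is a quotient of $\h^{d+n}_\m(M,N)$, which is Artinian by Theorem \ref{Teo} when $\dim_R N/\m$-type hypotheses hold, or by a direct argument that the transition maps are eventually surjective on a cofinal subsystem) and then invoke that an Artinian module admitting a filtration by representable submodules with bounded attached sets is representable. An alternative, cleaner route — and the one I would try first — is to avoid Proposition \ref{Lemma2} entirely for general $N$ and instead mimic its inductive proof directly: pick the finitely many primes of top dimension $n$ in $\Supp_R N$ (these are finite since they are minimal primes of $R$ with $\dim R/\p = n$, and $R$ is Noetherian), use the filtration argument from Proposition \ref{Lemma2} with the submodules $L_i$ replaced by $\Gamma_{\p_i\text{-primary-ish}}$ pieces, and conclude. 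This keeps the bookkeeping of attached primes entirely finite and sidesteps the representability-under-direct-limits issue.
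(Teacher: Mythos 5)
There is a genuine gap, and it sits exactly where you flag it: the passage to the direct limit. You reduce to the finitely generated case and then assert that $\h_{\ia}^{d+n}(M,N)=\varinjlim_j \h_{\ia}^{d+n}(M,N_j)$ is representable with $\Att_R(\h_{\ia}^{d+n}(M,N))\subseteq \bigcup_j \Att_R(\h_{\ia}^{d+n}(M,N_j))$ because each term is Artinian with controlled attached primes. Neither claim is justified: a direct limit of Artinian (hence representable) modules need not be Artinian or representable, and there is no general containment of attached primes of a direct limit in the union of the attached primes of the terms (the transition maps are not even injective here, so this is not a union of submodules). Your fallback arguments do not close the gap: $\h^{d+n}_{\m}(M,N)$ is not known to be Artinian, since Theorem \ref{Teo} requires $N$ to be $\ia$-weakly finite over $M$, which is not a hypothesis of this theorem, and no eventual surjectivity of the transition maps is established. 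A further mismatch is your use of Proposition \ref{TheoNew} for the finitely generated case, whose hypotheses (weak finiteness, an $\ia$-filter regular sequence) are again not available; the finitely generated case should instead be quoted as the known equality of Gu--Chu. Finally, the non-vanishing claim is both unnecessary and false in general: $\h^{d+n}_{\ia}(M,N)$ can vanish, and one simply assumes it nonzero without loss of generality for the attached-primes statement.

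The paper avoids the limit problem by using the direct limit only where it is harmless, namely to prove a vanishing statement. It chooses, via Bourbaki, a submodule $L\subseteq N$ with $\Ass_R(N/L)=X:=\{\p\in\Ass_R(N)\mid \cdim(\ia,M,R/\p)=d+n\}$ and $\Ass_R(L)=\Ass_R(N)\setminus X$, shows $\h^{d+n}_{\ia}(M,L)=0$ by writing $L$ as a direct limit of finitely generated submodules (vanishing does commute with $\varinjlim$ by Lemma \ref{Lemma1}), and thus replaces $N$ by $N/L$, i.e.\ assumes $\Ass_R(N)=X$. The attached-prime bound for $N$ itself is then proved directly: for $r\notin\bigcup_{\p\in\Ass_R(N)}\p$ one has $\cdim(\ia,M,N/rN)<d+n$, so multiplication by $r$ on $\h^{d+n}_{\ia}(M,N)$ is surjective, which combined with the observation that every $\p\in X$ satisfies $\dim_R R/\p=n$, hence is a minimal prime of $R$ (so $\Ass_R(N)\subseteq\Ass_R(R)$ is finite), forces $\Att_R(\h^{d+n}_{\ia}(M,N))\subseteq\Ass_R(N)$. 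Only after this does Proposition \ref{Lemma2} enter, yielding the secondary representation. If you want to salvage your outline, you must replace the limit step by an argument of this kind that controls $\Att_R$ of the full module directly rather than through the pieces.
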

\begin{proof}
	Firstly we will show the inclusion of the set of attached primes.
We can assume $\h_{\ia}^{d+n}(M,N) \neq 0$.
	
	Let $X = \{ \p \in \Ass_R (N) \mid \cdim (\ia, M, R/\p) = d+n \}$. By \cite[Proposition 2 - p.263]{Bourb}, there exists a submodule $L \subseteq N$ such that $\Ass_R(N/L) = X$ and $\Ass_R(L) = \Ass_R(N) \setminus X$.
	
	Consider the exact sequence
	$$\h_{\ia}^{d+n}(M,L) \longrightarrow \h_{\ia}^{d+n}(M,N) \longrightarrow \h_{\ia}^{d+n}(M,N/L) \longrightarrow \h_{\ia}^{d+n+1}(M,L).$$
	Then $\h_{\ia}^{d+n+1}(M,L) = 0$, since $n+1 > \dim_R L$. On the other hand, by Lemma \ref{Lemma1}, $\h_{\ia}^{d+n}(M,L) = \displaystyle \varinjlim_j \h_{\ia}^{d+n}(M, L_j)$, where $\{ L_j \mid j\in I \}$ is a family of finite submodules of $L$.
	
	We claim that $\h_{\ia}^{d+n}(M,L_j) = 0$, for all $j \in I$.
If $\dim_R L_j = l_j < n$, then $d+n > d+l_j$ and therefore $\h_{\ia}^{d+n}(M,L_j) = 0$.	
Now, if $\dim_R L_j = n$, then
	$$\Att_R (\h_{\ia}^{d+n}(M,L_j)) = \{ \p \in \Ass_R(L_j) \mid \cdim (\ia, M, R/\p) = d+n \}.$$
	On the other hand, as $L_j$ is a submodule of $L$ and $\Ass_R(L) = \Ass_R(N) \setminus X$, we have $\Ass_R (L_j) \cap X = \emptyset$. Then, $\h_{\ia}^{d+n}(M,L_j) = 0$ and we can conclude $\h_{\ia}^{d+n}(M,L) = 0$.
	
	Therefore, $\h_{\ia}^{d+n}(M,N) \cong \h_{\ia}^{d+n}(M,N/L)$ and hence we may assume that $L = 0$ and $\Ass_R(N) = X$.
	
	Now, we claim $\Att_R(\h_{\ia}^{d+n}(M,N)) \subseteq \Ass_R(N)$.
If $r \notin \displaystyle\bigcup_{\p \in \Ass(N)} \p$, then the short exact sequence
	$$0 \longrightarrow N \stackrel{\cdot r}{\longrightarrow} N \longrightarrow N/rN \longrightarrow 0$$
induces the exact sequence
	$$\h_{\ia}^{d+n}(M,N) \stackrel{\cdot r}{\longrightarrow} \h_{\ia}^{d+n}(M,N) \longrightarrow \h_{\ia}^{d+n}(M,N/rN).$$
	Since  $\cdim(\ia, M, N/rN) < n+d$, we have $\h_{\ia}^{d+n}(M,N/rN) = 0$. Then $r \h_{\ia}^{d+n}(M,N) = \h_{\ia}^{d+n}(M,N)$ and $r \notin \displaystyle\bigcup_{\p \in \Att_R(\h_{\ia}^{d+n}(M,N))} \p$. Therefore $\displaystyle\bigcup_{\p \in \Att_R(\h_{\ia}^{d+n}(M,N))} \p \subseteq \displaystyle\bigcup_{\p \in \Ass_R (N)} \p$.
	
	We claim that $\Ass_R(N) \subseteq \Ass_R(R)$, and therefore $|\Ass_R(N)| < \infty$. To see this, let $\p \in \Ass_R(N)$ then
	$$n+d = \cdim(\ia, M, R/\p) \leq \pdim (M) + \dim_R R/\p \leq \pdim(M) + \dim_R R = n+d,$$
	which implies $\dim_R R/\p = n$. Thus $\p$ is minimal over the ideal $0$ and $\p \in \Ass_R(R)$, which is a finite set, since $R$ is Noetherian.
	
	If now $\p \in \Att_R(\h_{\ia}^{d+n}(M,N))$, then $\p \subseteq \q$, for some $\q \in \Ass_R (N)$. On the other hand, since
	$$\cdim(\ia, M, R/\p) \leq \pdim (M) + \dim_R R/\p \leq \pdim (M) + \dim_R R = d+n$$
	and
	$$n+d = \cdim(\ia, M, R/\q) \leq \cdim(\ia, M, R/\p) \leq n+d$$
	we have $\p=\q$. Therefore, $\Att_R(\h_{\ia}^{d+n}(M,N)) \subseteq \Ass_R(N)$.
    
  The secondary representation of  $\h_{\ia}^{d+n}(M,N)$  follows immediately from Proposition \ref{Lemma2} and  the first part of this proof.  
\end{proof}


The inclusion shown in the previous theorem is strict in general (see \cite[Example 6]{DiYa}). But, if $N$ is finitely generated the equality holds \cite[Theorem 2.3]{yanchu}.


\section{Torsion and Extension Functors: Some Preparatory Results}

Let $R$ be a commutative Noetherian ring. Now, we are interested in the cofiniteness of torsion and extension functors. First we will recall some definitions that will be used in the rest of the paper.

\begin{itemize}
\item[(i)] An $R$-module $M$ is called minimax, if there is a finitely generated submodule $N$ of $M$ such that $M/N$ is Artinian \cite{Zo1}.

\item[(ii)] An $R$-module $M$ is said to be $\mathfrak{a}$-cominimax   if ${\rm Supp}(M)\subseteq V(\mathfrak{a})$ and ${\rm Ext}^j_R(R/\mathfrak{a}, M)$ is minimax for all $j$ \cite{azami}.

\item[(iii)] An $R$-module $M$ is said to be 
weakly Laskerian, if the set of associated primes of any quotient module of $M$ is finite \cite{divmafi2}.

\item[(iv)] An $R$-module $M$ is said to be $\mathfrak{a}$-weakly cofinite if ${\rm Supp}(M)\subseteq V(\mathfrak{a})$ and ${\rm Ext}^j_R(R/\mathfrak{a}, M)$ is weakly Laskerian for all $j$ \cite{divmafi2}. 

\end{itemize}

Now we will recall some facts of minimax modules.

\begin{rem}\label{rem11}The following statements hold: 
\begin{itemize}
\item[(i)] Noetherian modules are minimax, as are Artinian modules.

\item[(ii)] The set of associated primes of any minimax $R$-module is finite. 

\end{itemize}
\end{rem}

In what follows, we will use Serre subcategories of the category of $R$-modules. We denote by $\mathcal{S}$ a Serre subcategory of the category of $R$-modules. Recall that the classes of Noetherian modules, of Artinian modules, of minimax modules, of weakly Laskerian and of Matlis reflexive are examples of Serre subcategories. 

\begin{lem} \label{LemmaEQ}
	Let $M$ be an $R$-module such that $M/\ia M \in \s$. Then $M/\ia^n M \in \s$ for all $n \in \N$.
\end{lem}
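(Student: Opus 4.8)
The plan is to induct on $n$, with the base case $n=1$ being exactly the hypothesis $M/\ia M \in \s$. For the inductive step, suppose $M/\ia^n M \in \s$; I want to conclude $M/\ia^{n+1}M \in \s$. The natural tool is the short exact sequence
$$0 \longrightarrow \ia^n M / \ia^{n+1} M \longrightarrow M/\ia^{n+1}M \longrightarrow M/\ia^n M \longrightarrow 0.$$
Since $\s$ is a Serre subcategory (closed under submodules, quotients, and extensions), it suffices to show that the left-hand term $\ia^n M/\ia^{n+1}M$ belongs to $\s$, because then $M/\ia^{n+1}M$ is an extension of two modules in $\s$.

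The key observation for handling $\ia^n M/\ia^{n+1}M$ is that it is a quotient of a finite direct sum of copies of $M/\ia M$. Concretely, if $\ia = (a_1,\dots,a_r)$, then $\ia^n$ is generated by the monomials $a^{\underline{\alpha}} = a_1^{\alpha_1}\cdots a_r^{\alpha_r}$ of total degree $n$; sending the class of $m$ in the $\underline{\alpha}$-th copy of $M/\ia M$ to $a^{\underline{\alpha}} m + \ia^{n+1}M$ gives a well-defined surjection $(M/\ia M)^{\oplus t} \twoheadrightarrow \ia^n M/\ia^{n+1}M$ (well-defined because $a^{\underline{\alpha}}\,\ia M \subseteq \ia^{n+1}M$). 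A Serre subcategory is closed under finite direct sums (an iterated extension of copies of a fixed object) and under quotients, so $\ia^n M/\ia^{n+1}M \in \s$, completing the induction.

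The main point requiring a little care — though it is routine — is the finite generation of $\ia^n$ as an ideal, which is where Noetherianness of $R$ is used to ensure the direct sum is finite; this is already a standing hypothesis in the paper. No step presents a genuine obstacle: once one writes down the short exact sequence above and the surjection from a finite power of $M/\ia M$, the Serre-subcategory axioms do all the work. I would present it as a two-line induction: reduce to $\ia^n M/\ia^{n+1}M \in \s$ via the exact sequence and the extension-closure of $\s$, then observe this module is a quotient of $(M/\ia M)^{\oplus t}$ and invoke closure under finite direct sums and quotients.
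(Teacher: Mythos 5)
Your proof is correct and follows essentially the same line as the paper's: an induction on $n$ that exploits the $\ia$-adic filtration together with the Serre-subcategory axioms and a surjection from a finite direct sum determined by generators of the ideal. The only (immaterial) difference is which slice carries the induction: you present $M/\ia^{n+1}M$ as an extension of $M/\ia^n M$ by $\ia^n M/\ia^{n+1}M$, covering the latter by copies of $M/\ia M$ via the degree-$n$ monomials, whereas the paper uses the exact sequence $(M/\ia^{n-1}M)^t \to M/\ia^n M \to M/\ia M \to 0$ with the map built from the $t$ generators of $\ia$, so that the inductive module covers the submodule $\ia M/\ia^n M$ instead.
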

\begin{proof}
	We use induction on $n$. If $n=1$, then it is true by hypothesis. Now let $n>1$ and suppose the result is true for $n-1$. Since $M/\ia^{n-1}M \in \s$, $(M/\ia^{n-1}M)^k \in \s$, for all $k \in \N_0$. There is an exact sequence
	$$(M/\ia^{n-1}M)^t \stackrel{f}{\longrightarrow} M/\ia^n M \stackrel{g}{\longrightarrow} M/\ia M \longrightarrow 0$$
	where $\ia = (x_1, \ldots, x_t)$ and
	$$f(m_1 + \ia^{n-1}M, \ldots, m_t + \ia^{n-1}M) = x_1 m_1 + \ldots + x_t m_t + \ia^n M.$$
	Therefore, $M/\ia^nM \in \s$.
\end{proof}

Let $\underline{x} = x_1, \ldots, x_t$. In the next two Theorems we will use Koszul complexes, $K_{\bullet}(\underline{x})$; Koszul homology, $\h_q(\underline{x};M) = \h_q(K_{\bullet}(\underline{x}) \otimes_R M)$; and Koszul cohomology, $\h^q(\underline{x};M) = \h^q(\Hom_R(K_{\bullet}(\underline{x}), M))$. The reader can see \cite{Weibel} for more details. 

\begin{thm} \label{TheoEQ1}
	Let $M$ be an $R$-module such that $\Ext_R^i(R/\ia, M) \in \s$ for all $i \in \N_0$. Then $M/\ia^n M \in \s$ for all $n \in \N$.
\end{thm}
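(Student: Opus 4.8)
The plan is to reduce to the case $n=1$ via Lemma~\ref{LemmaEQ} and then to realize $M/\ia M$ as a top Koszul cohomology of $M$, bringing the Koszul machinery into play. Write $\ia=(x_1,\dots,x_t)$ and set $\underline{x}=x_1,\dots,x_t$. A direct computation with the Koszul complex $K_\bullet(\underline{x})$ identifies $\h^t(\underline{x};M)=\h^t(\Hom_R(K_\bullet(\underline{x}),M))$ with $M/\ia M$, since it is the cokernel of the last differential $\Hom_R(K_{t-1},M)\to\Hom_R(K_t,M)$. As $K_\bullet(\underline{x})$ is a bounded complex of finitely generated free modules augmenting to $R/\ia$, I will prove the stronger statement that $\h^i(\underline{x};M)\in\s$ for every $i$; Lemma~\ref{LemmaEQ} then gives the theorem.

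The key step --- and the main obstacle --- is to propagate the hypothesis from the single module $R/\ia$ to all finitely generated modules supported on $V(\ia)$: if $\Ext_R^i(R/\ia,M)\in\s$ for all $i$, then $\Ext_R^i(N,M)\in\s$ for all $i$ and every finitely generated $R$-module $N$ with $\Supp_R(N)\subseteq V(\ia)$. A naive induction on the number of generators of $\ia$ does not work, because $\Ext_R^i(R/\ia',M)\in\s$ may fail for a sub-ideal $\ia'\subseteq\ia$. Instead, such an $N$ is killed by a power of $\ia$ and carries a finite filtration with factors $R/\p$, $\p\supseteq\ia$, so the long exact sequences of $\Ext_R^\bullet(-,M)$ reduce the claim to $N=R/\p$; for these one uses the change-of-rings spectral sequence $E_2^{p,q}=\Ext_{R/\ia}^p\!\big(R/\p,\Ext_R^q(R/\ia,M)\big)\Rightarrow\Ext_R^{p+q}(R/\p,M)$. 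Its coefficient modules lie in $\s$ by hypothesis; since $\s$ restricts to a Serre subcategory of $R/\ia$-modules, $R/\ia$ is Noetherian and $R/\p$ is a finitely generated $R/\ia$-module, Proposition~\ref{Lema3N} gives $E_2^{p,q}\in\s$, hence every subquotient, and finally every $\Ext_R^n(R/\p,M)$, lies in $\s$.

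With this propagation in hand, apply the hyper-$\Ext$ spectral sequence of the bounded complex of projectives $K_\bullet(\underline{x})$,
\[
E_2^{p,q}=\Ext_R^p\big(\h_{-q}(K_\bullet(\underline{x})),M\big)\ \Longrightarrow\ \h^{p+q}\big(\Hom_R(K_\bullet(\underline{x}),M)\big)=\h^{p+q}(\underline{x};M).
\]
Each Koszul homology $\h_j(K_\bullet(\underline{x}))=\h_j(\underline{x};R)$ is a subquotient of a finitely generated free module, hence finitely generated, and is killed by $\ia$, hence supported on $V(\ia)$; so the previous paragraph shows every $E_2^{p,q}=\Ext_R^p(\h_{-q}(\underline{x};R),M)$ lies in $\s$. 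Since the complex is bounded the spectral sequence converges, and each $\h^n(\underline{x};M)$ has a finite filtration with subquotients of the $E_2^{p,q}$ as factors; hence $\h^n(\underline{x};M)\in\s$ for all $n$, in particular $M/\ia M\cong\h^t(\underline{x};M)\in\s$. Lemma~\ref{LemmaEQ} then delivers $M/\ia^nM\in\s$ for all $n\in\N$.

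If one wishes to avoid spectral sequences, the same conclusion follows from the distinguished triangle $R/\ia\to K_\bullet(\underline{x})\to\tau_{\ge1}K_\bullet(\underline{x})\to$ in the derived category (valid since $\tau_{\le0}K_\bullet(\underline{x})\simeq R/\ia$): applying $\mathbf{R}\Hom_R(-,M)$ produces a long exact sequence in which the term $\Ext_R^\bullet(R/\ia,M)$ is controlled by hypothesis and the contribution of $\tau_{\ge1}K_\bullet(\underline{x})$ is handled by a filtration of its finitely generated $\ia$-torsion homology, again via the propagation step.
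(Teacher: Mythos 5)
Your proof is correct, but it takes a genuinely different route from the paper. The paper, after the same two reductions (Lemma \ref{LemmaEQ} and the identification $M/\ia M\cong \h^t(\underline{x};M)$), argues entirely with short exact sequences: it introduces the auxiliary class $\mathcal{C}=\{N \mid \Ext_R^i(R/\ia,N)\in\s \ \forall i\}$ and runs an induction along the Koszul cochain complex, showing the coboundary modules $B^j$ lie in $\mathcal{C}$, extracting each $\h^l(\underline{x};M)$ as a submodule of $(0:_{C^l}\ia)\cong\Hom_R(R/\ia,C^l)\in\s$. You instead first prove the stronger propagation statement that $\Ext_R^i(N,M)\in\s$ for every finitely generated $N$ with $\Supp_R(N)\subseteq V(\ia)$ (via a prime filtration and the change-of-rings spectral sequence $\Ext^p_{R/\ia}(R/\p,\Ext^q_R(R/\ia,M))\Rightarrow\Ext^{p+q}_R(R/\p,M)$), and then feed the Koszul homologies $\h_j(\underline{x};R)$ --- finitely generated and killed by $\ia$ --- into the hyper-Ext spectral sequence of the bounded complex of free modules $K_\bullet(\underline{x})$. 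Both steps are sound: the $E_2$-terms lie in $\s$ (note that Proposition \ref{Lema3N} is stated for $\Ext$ over $R$, but the same subquotient-of-$T^{n}$ argument works over $R/\ia$, so this is only a cosmetic misattribution), the spectral sequences are first-quadrant respectively bounded, and the finite filtrations keep everything in $\s$. Your propagation step is essentially implication (v)$\Rightarrow$(vi) of the paper's Theorem \ref{teo1}, which the paper later derives from Gruson's theorem (Lemma \ref{lema3}); since you prove it independently there is no circularity. The trade-off: your argument yields a more general intermediate result and all Koszul cohomologies at once at the cost of two spectral sequences, while the paper's proof is more elementary and self-contained, using nothing beyond the Serre axioms and $\Hom_R(R/\ia,-)$.
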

\begin{proof}
	In view of Lemma \ref{LemmaEQ}, it is enough to prove that $M/\ia M \in \s$.
	
	Let $\ia = (x_1, \ldots, x_t)$ and $\underline{x} = x_1, \ldots, x_t$. Then $M/\ia M \cong \h^t(\underline{x};M)$ and $\h^j(\underline{x};M) = Z^j/B^j$, where $B^j$ and $Z^j$ are the modules of coboundaries and cocycles of the complex $\Hom_R(K_{\bullet}(\underline{x}), M)$, respectively, and where $K_{\bullet}(\underline{x})$ is the Koszul complex on $\underline{x}$.
	
	Put
	$$\mathcal{C} = \{ N \mid \Ext_R^i(R/\ia,N) \in \s \mbox{ for all } i \in \N_0 \}.$$
	
	Our claim is that $B^j \in \mathcal{C}$ for all $j= 0, 1, \ldots, t$. We will prove this by induction on $j$. If $j=0$, $B^0 = 0 \in \mathcal{C}$. 
	
	Now, assume that $B^l \in \mathcal{C}$. Put $C^j = \Hom_R(K_{j}(\underline{x}), M) / B^j$. Since $K_l(\underline{x})$ is a finite free $R$-module, it follows that $\Hom_R(K_l(\underline{x}),M) \in \mathcal{C}$. Now, since $B^l \in \mathcal{C}$, we have that $C^l \in \mathcal{C}$. Hence $(0 :_{C^l} \ia) \cong \Hom_R(R/\ia, C^l) \in \s$.
	
	Because of $\ia \h^l(\underline{x};M) = 0$, it follows that $\h^l(\underline{x};M) \subseteq (0 :_{C^l} \ia)$, and so $\h^l(\underline{x};M) \in \s$. Consequently. from the short exact sequence
	$$0 \longrightarrow \h^l(\underline{x};M) \longrightarrow C^l \longrightarrow B^{l+1} \longrightarrow 0$$
	we deduce that $B^{l+1} \in \mathcal{C}$.
	
	Hence by induction we have proved that $B^j \in \mathcal{C}$ for all $j \in \N_0$.
	
	Now, since $B^t \in \mathcal{C}$ and $\Hom_R(K_t(\underline{x}),M) \in \mathcal{C}$, we obtain $C^t \in \mathcal{C}$. Hence $(0 :_{C^t} \ia) \cong \Hom_R(R/\ia, C^t) \in \s$. Thus $\h^t(\underline{x};M) \subseteq (0 :_{C^t} \ia)$ is in $\s$ too.
	
	Therefore $M/\ia M \in S$.	
\end{proof}

Next Theorem is a generalization of \cite[Theorem 2.1]{M1}.

\begin{thm} \label{TheoEQ2}
	Let $M$ be an $R$-module and $\ia = (x_1, \ldots, x_t)$ be an ideal of $R$. Then the following conditions are equivalent:
	\begin{itemize}
		\item[\textbf{(i)}] $\Ext_R^i(R/\ia, M) \in \s$, for all $i\in\N_0$;
		\item[\textbf{(ii)}] $\Tor^R_i(R/\ia, M) \in \s$, for all $i\in\N_0$;
		\item[\textbf{(iii)}] The Koszul cohomology modules $H^i(x_1, \ldots, x_t;M) \in \s$, for all $i \in \N_0$.
	\end{itemize}
\end{thm}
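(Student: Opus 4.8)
The plan is to prove the chain of equivalences by showing $(iii)\Leftrightarrow (i)$ and $(iii)\Leftrightarrow (ii)$, exploiting the self-duality of the Koszul complex $K_\bullet(\underline{x})$ and the spectral-sequence (or iterated long exact sequence) relationship between Koszul (co)homology and $\Ext$/$\Tor$ against $R/\ia$. Since $\s$ is a Serre subcategory, it is closed under submodules, quotients and extensions, and hence under finite direct sums; this closure is exactly what every step below consumes.

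First I would dispose of $(i)\Rightarrow(iii)$ and $(ii)\Rightarrow(iii)$. Recall that $\h^q(\underline{x};M)$ and $\h_q(\underline{x};M)$ are computed from the finite free complex $K_\bullet(\underline{x})$, and that $\ia$ annihilates every Koszul (co)homology module, so $\h^q(\underline{x};M)$ and $\h_q(\underline{x};M)$ are modules over $R/\ia$. There is a first-quadrant spectral sequence $\Ext^p_R(R/\ia,\h^q(\underline{x};M)) \Rightarrow$ something built from $\h^\bullet(\Hom_R(K_\bullet,M))$, but more elementarily one can argue by descending induction on $q$ using the truncated Koszul complex and the fact that $K_q(\underline{x})$ is finite free (so $\Hom_R(K_q(\underline{x}),M)$ has its $\Ext$-against-$R/\ia$ modules equal to finite direct sums of copies of $\Ext^\bullet_R(R/\ia,M)\in\s$). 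This is precisely the technique already deployed in the proof of Theorem \ref{TheoEQ1}, where the modules $B^j$ of coboundaries are shown to lie in the class $\mathcal{C} = \{N \mid \Ext^i_R(R/\ia,N)\in\s \ \forall i\}$; carrying that argument one notch further shows each $\h^l(\underline{x};M)$ itself lies in $\mathcal{C}$, and in particular $\h^l(\underline{x};M)=\Hom_R(R/\ia,\h^l(\underline{x};M))\in\s$. The $\Tor$ side is dual: using that $K_\bullet(\underline{x})$ is self-dual, $\h_q(\underline{x};M)\cong \h^{t-q}(\underline{x};M)$, so $(iii)$ is a single condition that both $(i)$ and $(ii)$ imply, and symmetrically it suffices to derive $(i)$ and $(ii)$ from $(iii)$.

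For the reverse directions I would run the converse induction. Assume $(iii)$, i.e. $\h^q(\underline{x};M)\in\s$ for all $q$. Filtering $\Hom_R(K_\bullet(\underline{x}),M)$ by the usual brutal/canonical truncations gives short exact sequences $0\to B^{l+1}\to C^l\to$ (shift) that, together with $\h^l(\underline{x};M)\in\s$ and $\Hom_R(K_l(\underline{x}),M)\in\s^{?}$ — here one must be slightly careful, since $M$ itself need not be in $\s$ — force the \emph{$\Ext$-against-$R/\ia$} of the relevant modules into $\s$ rather than the modules themselves. Concretely, one shows by induction on $l$ that $\Ext^i_R(R/\ia,B^l)\in\s$ for all $i$, using $\ia\cdot\h^l=0$, the inclusion $\h^l(\underline{x};M)\subseteq(0:_{C^l}\ia)$, and the long exact sequence of $\Ext_R(R/\ia,-)$ applied to $0\to\h^l\to C^l\to B^{l+1}\to 0$ and to $0\to B^l\to\Hom_R(K_l(\underline{x}),M)\to C^l\to 0$. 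Since $\h^0(\underline{x};M)=(0:_M\ia)=\Hom_R(R/\ia,M)$ and inductively $\h^i(\underline{x};M)$ controls $\Ext^i_R(R/\ia,M)$ up to extensions by the $B^j$, one reads off $\Ext^i_R(R/\ia,M)\in\s$ for all $i$, giving $(iii)\Rightarrow(i)$; the Koszul self-duality argument then yields $(iii)\Rightarrow(ii)$ verbatim with $\Tor$ in place of $\Ext$ and homology in place of cohomology.

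The main obstacle I anticipate is bookkeeping the induction cleanly when $M\notin\s$: one cannot simply say ``$\h^l\in\s$ and $C^l\in\s$,'' because $\Hom_R(K_l(\underline{x}),M)$ is a finite direct sum of copies of $M$ and need not lie in $\s$. The correct invariant to propagate is membership in the class $\mathcal{C}$ of modules all of whose $R/\ia$-$\Ext$'s lie in $\s$ (for the $\Tor$/$(ii)$ statement, the analogous class defined via $\Tor$), exactly as in Theorem \ref{TheoEQ1}; checking that $\mathcal{C}$ is closed under the extensions and quotients appearing in the Koszul filtration, and that $N\in\mathcal{C}$ with $\Supp$-conditions forces $N\in\s$ when $\ia N=0$, is the crux. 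Once that is set up, the equivalences follow by chasing the long exact $\Ext$/$\Tor$ sequences attached to the Koszul complex, and the whole argument is essentially an amplification of the proof of Theorem \ref{TheoEQ1}.
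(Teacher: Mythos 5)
Your directions (i) $\Rightarrow$ (iii) and (ii) $\Rightarrow$ (iii) are fine: they are exactly the mechanism of Theorem \ref{TheoEQ1} (whose proof already exhibits each $\h^l(\underline{x};M)$ inside $(0:_{C^l}\ia)\cong\Hom_R(R/\ia,C^l)\in\s$), and the self-duality $\h_q(\underline{x};M)\cong \h^{t-q}(\underline{x};M)$ is also what the paper uses. The genuine gap is in the directions \emph{out of} (iii), which your plan relies on twice. The invariant you propose to propagate there, membership in $\mathcal{C}=\{N \mid \Ext_R^i(R/\ia,N)\in\s \text{ for all } i\}$ (respectively its $\Tor$ analogue), cannot be established from hypothesis (iii) alone: your inductive step for the coboundaries passes through the terms $\Hom_R(K_l(\underline{x}),M)\cong M^{\binom{t}{l}}$, and $\Ext_R^i\bigl(R/\ia, M^{\binom{t}{l}}\bigr)$ is a finite direct sum of copies of $\Ext_R^i(R/\ia,M)$, which is precisely the unknown the theorem is trying to place in $\s$. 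So the induction ``$\Ext_R^i(R/\ia,B^l)\in\s$ for all $i$'' is circular (running it from the top of the Koszul complex downward has the same defect), and the two long exact sequences you list do not close it; acknowledging that $M\notin\s$ and switching from ``$N\in\s$'' to ``all $\Ext$'s of $N$ lie in $\s$'' does not remove the circularity.

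The fix is to change the class, not the bookkeeping. For (iii) $\Rightarrow$ (i) the paper propagates membership in the Koszul class $\mathcal{C}'=\{N \mid \h^j(\underline{x};N)\in\s \text{ for all } j\}$, which \emph{does} contain every finite direct sum of copies of $M$ by hypothesis (iii). Taking a resolution $\mathbb{F}_{\bullet}$ of $R/\ia$ by finitely generated free modules and writing $\Ext_R^i(R/\ia,M)=Z^i/B^i$ for $\Hom_R(\mathbb{F}_{\bullet},M)$, one shows $B^i\in\mathcal{C}'$ and $C^i=\Hom_R(F_i,M)/B^i\in\mathcal{C}'$ by the same inductive scheme as in Theorem \ref{TheoEQ1} (now via the long exact sequence of Koszul cohomology), and then extracts the conclusion from $\Ext_R^i(R/\ia,M)\subseteq(0:_{C^i}\ia)\cong\Hom_R(R/\ia,C^i)\cong \h^0(\underline{x};C^i)\in\s$. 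The paper then closes the circle as (i) $\Rightarrow$ (ii) $\Rightarrow$ (iii) $\Rightarrow$ (i), with (i) $\Rightarrow$ (ii) obtained by putting the cycles of $\mathbb{F}_{\bullet}\otimes_R M$ into the $\Ext$-class and invoking Theorem \ref{TheoEQ1} to get $Z_i/\ia Z_i\in\s$, rather than through your two biconditionals with (iii). Until you replace the $\Ext$/$\Tor$-based class by the Koszul-based one in the directions starting from (iii), the proposal does not prove the theorem.
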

\begin{proof}
(i) $\Rightarrow$ (ii) 
Let 
$$\mathbb{F}_{\bullet}: \cdots \rightarrow F_2 \rightarrow F_1 \rightarrow F_0 \rightarrow 0$$ be a free resolution of finitely generated $R$-modules for $R/\ia$. Consider the complex $\mathbb{F}_{\bullet} \otimes_R M$; it follows that $\Tor_i^R(R/\ia, M) = Z_i/B_i$, where $B_i$ and $Z_i$ are the modules of boundaries and cycles of this new complex, respectively.

Put
$$\mathcal{C} = \{ N \mid \Ext_R^j(R/\ia,N) \in \s \mbox{ for all } j \in \N_0 \}.$$
Our claim is that $Z_i \in \mathcal{C}$ for all $i \in \N_0$. We will prove this by induction on $i$. 

If $i=0$, $Z_0 = F_0 \otimes_R M \in \mathcal{C}$, since $F_0$ is a finitely generated free $R$-module.

Now, assume that $Z_l \in \mathcal{C}$. 

Consider the short exact sequence
\begin{eqnarray}\label{(**)}
	0 \longrightarrow B_i \longrightarrow Z_i \longrightarrow \Tor_i^R(R/\ia, M) \longrightarrow 0,
\end{eqnarray}
where we can see $B_i \cong (F_{i+1}\otimes_R M) / Z_{i+1}$. Hence we obtain the exact sequence
$$Z_i/\ia Z_i \longrightarrow \Tor_i^R(R/\ia, M) \longrightarrow 0.$$
Therefore $\Tor_i^R(R/\ia, M)$ is homomorphic image of $Z_i/ \ia Z_i$, for all $i \in \N_0$.

Now, since $Z_l \in \mathcal{C}$, $\Ext_R^j(R/\ia, Z_l) \in \s$, for all $j \in \N_0$; then $Z_l/\ia Z_l \in \s$, by Theorem \ref{TheoEQ1}. Thus $\Tor_l^R(R/\ia,M) \in \s$. Therefore we deduce from (\ref{(**)}) that $B_l \in \mathcal{C}$ and so $Z_{l+1} \in \mathcal{C}$.

Hence by induction we have proved that $Z_i \in \mathcal{C}$, for all $i \in \N_0$. It follows by Theorem \ref{TheoEQ1} that $Z_i / \ia Z_i \in \s$, for all $i \in \N_0$, and therefore $\Tor_i^R(R/\ia, M) \in \s$, for all $i \in \N_0$.

(ii) $\Rightarrow$ (iii)
Let $\underline{x} = x_1, \ldots, x_t$. As $\h^i(\underline{x};M) \cong H_{t-i}(\underline{x}; M)$, it is sufficient to show that $H_i(\underline{x}; M) \in \s$, for all $i \in \N_0$.

Consider the Koszul complex
$$K_{\bullet}(\underline{x}): 0 \rightarrow K_t(\underline{x}) \rightarrow K_{t-1}(\underline{x}) \rightarrow \cdots \rightarrow K_1(\underline{x}) \rightarrow K_0(\underline{x}) \rightarrow 0.$$ 
Then $H_i(\underline{x};M) = Z_i/B_i$, where $B_i$ and $Z_i$ are the modules of boundaries and cycles of the complex $K_{\bullet}(\underline{x}) \otimes_R M$, respectively.

Put
$$\mathcal{C} = \{ N \mid \Tor_j^R(R/\ia, N) \in \s, \mbox{ for all } j \in \N_0 \}.$$

Consider the short exact sequence 
$$0 \longrightarrow B_i \longrightarrow Z_i \longrightarrow H_i(\underline{x};M) \longrightarrow 0.$$
Hence we obtain the exact sequence
$$Z_i/\ia Z_i \longrightarrow H_i(\underline{x};M) \longrightarrow 0,$$
thus $H_i(\underline{x};M)$ is homomorphic image of $Z_i/ \ia Z_i$, for all $i \in \N_0$.

Now, analogous to the proof of the implication (i) $\Rightarrow$ (ii), we can show that $Z_i \in \mathcal{C}$, for all $i \in \N_0$. Since $Z_i/\ia Z_i = \Tor_0^R(R/\ia, Z_i) \in \s$, for all $i \in \N_0$, we have $H_i(\underline{x};M) \in \s$, for all $i \in \N_0$.

(iii) $\Rightarrow$ (i)
Let 
$$\mathbb{F}_{\bullet}: \cdots \rightarrow F_2 \rightarrow F_1 \rightarrow F_0 \rightarrow 0$$ 
be a free resolution of finitely generated $R$-modules for $R/\ia$. Consider the complex $\Hom_R(\mathbb{F}_{\bullet}, M)$; it follows that $\Ext_R^i(R/\ia, M) = Z^i/B^i$, where $B^i$ and $Z^i$ are the modules of coboundaries and cocycles of this new complex, respectively.

Let $\underline{x} = x_1, \ldots, x_t$. Put
$$\mathcal{C} = \{ N \mid H^j(\underline{x};N) \in \s, \mbox{ for all } j \in \N_0 \}.$$

Consider the short exact sequence
$$0 \longrightarrow \Ext_R^i(R/\ia,M) \longrightarrow C^i \longrightarrow B^{i+1} \longrightarrow 0,$$
where $C^i = \Hom_R(F_i,M)/B^i$. Then $B^i \in \mathcal{C}$ (as in the proof of Theorem \ref{TheoEQ1}), for all $i \in \N_0$. Thus $C_i \in \mathcal{C}$, for all $i \in \N_0$.

Therefore, since
$$\Ext_R^i(R/\ia,M) \subseteq (0:_{C^i} \ia) \cong \Hom_R(R/\ia, C^i) \cong H^0(\underline{x};C^i)$$
and $H^0(\underline{x}; C^i) \in \s$, we can see that $\Ext_R^i(R/\ia, M) \in \s$, for all $i \in \N_0$.
\end{proof}

\begin{lem}\label{lema2} 
	Let $M$ be a finitely generated $R$-module and $N$ be an arbitrary module. Let $t$ be a non-negative integer such that ${\rm Tor}_i^R(M,N) \in \mathcal{S}$ for all $i\leq t$. Then ${\rm Tor}_i^R(L,N) \in \mathcal{S}$ for all $i\leq t$, whenever $L$ is a finitely generated $R$-module such that ${\rm Supp}_R (L)\subseteq {\rm Supp}_R(M)$.
\end{lem}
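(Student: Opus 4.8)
The statement is a standard "Gruson–type" reduction: every finitely generated $R$-module $L$ with $\Supp_R(L)\subseteq\Supp_R(M)$ admits a finite filtration
$$0=L_0\subseteq L_1\subseteq\cdots\subseteq L_r=L$$
in which each quotient $L_k/L_{k-1}$ is a homomorphic image of a finite direct sum of copies of $M$. (This is Gruson's theorem, or the version in Vasconcelos's book; it follows from the fact that $L$ is supported on $\Supp_R(M)=V(\Ann_R M)$, so some power of $\Ann_R M$ annihilates $L$, together with a Noetherian induction on the support.) Granting such a filtration, the conclusion will follow by splicing the long exact sequences of $\Tor$ and using that $\mathcal S$, being a Serre subcategory, is closed under subobjects, quotients, finite direct sums and extensions.

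\begin{prf}
First I would recall Gruson's theorem: since $\Supp_R(L)\subseteq\Supp_R(M)=V(\Ann_R M)$ and $L$ is finitely generated, there is a finite filtration
$$0=L_0\subseteq L_1\subseteq\cdots\subseteq L_r=L$$
such that, for each $k$ with $1\le k\le r$, the quotient $L_k/L_{k-1}$ is a quotient of a finite direct sum $M^{(n_k)}$ of copies of $M$. By an obvious induction on $r$, using the long exact sequence of $\Tor_{\bullet}^R(-,N)$ associated to $0\to L_{k-1}\to L_k\to L_k/L_{k-1}\to0$ and the fact that $\mathcal S$ is closed under subobjects, quotients and extensions, it suffices to treat the case $r=1$, i.e.\ to show: if $L$ is a quotient of $M^{(n)}$ then $\Tor_i^R(L,N)\in\mathcal S$ for all $i\le t$.

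So write a short exact sequence $0\to K\to M^{(n)}\to L\to0$ with $K$ finitely generated, and note $\Supp_R(K)\subseteq\Supp_R(M^{(n)})=\Supp_R(M)$. The long exact sequence of $\Tor_{\bullet}^R(-,N)$ gives, for each $i$,
$$\Tor_i^R(M^{(n)},N)\longrightarrow \Tor_i^R(L,N)\longrightarrow \Tor_{i-1}^R(K,N).$$
Since $\Tor_i^R(M^{(n)},N)\cong\Tor_i^R(M,N)^{(n)}\in\mathcal S$ for $i\le t$ by hypothesis and closure under finite direct sums, the module $\Tor_i^R(L,N)$ is an extension of a submodule of $\Tor_{i-1}^R(K,N)$ by a quotient of $\Tor_i^R(M,N)^{(n)}$. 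Thus it is enough to know $\Tor_{i-1}^R(K,N)\in\mathcal S$ for $i\le t$, i.e.\ $\Tor_j^R(K,N)\in\mathcal S$ for $j\le t-1$. This is exactly the statement of the lemma for $K$ in place of $L$, but with the index bound lowered by one, so one closes the argument by descending induction on $t$: the base case $t=0$ only requires $\Tor_0^R(L,N)=L\otimes_R N\in\mathcal S$, which follows from the case $r=1$ reduction above together with right-exactness of $-\otimes_R N$ and closure of $\mathcal S$ under quotients and finite direct sums, applied to each filtration quotient.

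The only genuine input beyond formal homological algebra is Gruson's filtration theorem; everything else is diagram chasing with the Serre-subcategory axioms. Accordingly the main point to be careful about is the simultaneous induction on the filtration length $r$ and on the bound $t$: one should set up the induction so that lowering $t$ by one is always matched against passing from $L$ to the (again finitely generated, same-support) syzygy module $K$, which keeps the hypothesis ``$\Tor_i^R(M,N)\in\mathcal S$ for $i\le t$'' fixed on $M$ while only the target index shrinks.
\end{prf}
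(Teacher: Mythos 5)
Your proposal is correct and follows essentially the same route as the paper: Gruson's filtration theorem to reduce to the case where $L$ is a quotient of $M^{n}$, then the short exact sequence $0\to K\to M^{n}\to L\to 0$, the long exact sequence of $\Tor$, and induction on $t$ with the inductive hypothesis applied to the syzygy $K$ (an arbitrary finitely generated module with $\Supp_R(K)\subseteq\Supp_R(M)$), exactly as in the paper's argument. Your closing remark about setting up the induction so that lowering $t$ is matched with passing to $K$ is precisely the point the paper handles by quantifying its inductive hypothesis over all such modules $L'$.
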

\begin{proof}
Since $\Supp_R(L) \subseteq \Supp_R(M)$, there exists a chain of $R$-modules
$$0 = L_0 \subset L_1 \subset \cdots \subset L_k = L$$
such that the factors $L_j/L_{j-1}$ are homomorphic images of a direct sums of finitely many copies of $M$ (by Gruson’s Theorem, \cite[Theorem 4.1]{WVasc}).

Consider the exact sequences
$$ 0 \rightarrow K \rightarrow M^n \rightarrow L_1 \rightarrow 0$$
$$ 0 \rightarrow L_1 \rightarrow L_2 \rightarrow L_2/L_1 \rightarrow 0$$
$$\vdots$$
$$ 0 \rightarrow L_{k-1} \rightarrow L_k \rightarrow L_k/L_{k-1} \rightarrow 0$$
for some positive integer $n$ and some finitely generated $R$-module $K$.

Let $i \leq t$ and $1 \leq j \leq k$. From the long exact sequence
$$\cdots \rightarrow \Tor^R_{i+1}(L_{j}/L_{j-1}, N) \rightarrow \Tor^R_{i}(L_{j-1}, N) \rightarrow \Tor^R_{i}(L_{j}, N) \rightarrow \Tor^R_{i}(L_{j}/L_{j-1}, N) \rightarrow \cdots$$
and the properties of Serre subcategories, $\Tor^R_{i}(L_{j}, N) \in \mathcal{S}$ if and only if $\Tor^R_{i}(L_{j-1}, N) \in \mathcal{S}$. Using an easy induction on $k$, it suffices to prove the case when $k = 1$.

So, consider the exact sequence mentioned above
\begin{eqnarray} \label{seq1}
0 \rightarrow K \rightarrow M^n \rightarrow L \rightarrow 0.
\end{eqnarray}
We now will use induction on $t$.

If $t=0$, we have that $L \otimes_R N$ is a homomorphic image of  $M^n \otimes_R N$ which belongs to $ \mathcal{S}$. Then $L \otimes_R N \in  \mathcal{S}$.

Now, lets assume $t>0$ and $\Tor^R_{i}(L',N) \in  \mathcal{S}$ for every finitely generated $R$-module $L'$ with $\Supp_R(L') \subseteq \Supp_R(M)$ and all $i < t$. The exact sequence (\ref{seq1}) induces the long exact sequence
$$\cdots \rightarrow \Tor^R_{i}(M^n, N) \rightarrow \Tor^R_{i}(L, N) \rightarrow \Tor^R_{i-1}(K, N) \rightarrow \cdots$$
so that, by the inductive hypothesis, $\Tor^R_{i-1}(K, N) \in \mathcal{S}$ for all $i \leq t$. On the other hand, $\Tor^R_{i}(M^n, N) \cong \displaystyle \bigoplus^n \Tor^R_{i}(M, N) \in \mathcal{S}$.

Therefore, $\Tor^R_{i}(L, N)$ for all $i \leq t$, and the proof is complete.
\end{proof}

\begin{lem}\label{lema3} 
	Let $M$ be a finitely generated $R$-module and $N$ be an arbitrary module. Let $t$ be a non-negative integer such that ${\rm Ext}^i_R(M,N) \in \mathcal{S}$ for all $i\leq t$. Then ${\rm Ext}^i_R(L,N) \in \mathcal{S}$ for all $i\leq t$, whenever $L$ is a finitely generated $R$-module such that ${\rm Supp}_R (L)\subseteq {\rm Supp}_R(M)$.
\end{lem}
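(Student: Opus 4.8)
The plan is to mirror the argument used for Lemma \ref{lema2}, replacing the covariant homological functor $\Tor$ by the contravariant-in-the-first-variable functor $\Ext$ and reversing the arrows accordingly. First I would invoke Gruson's Theorem (\cite[Theorem 4.1]{WVasc}) to obtain, from $\Supp_R(L) \subseteq \Supp_R(M)$, a finite chain
$$0 = L_0 \subset L_1 \subset \cdots \subset L_k = L$$
in which each quotient $L_j/L_{j-1}$ is a homomorphic image of a finite direct sum of copies of $M$; in particular $\Supp_R(L_j/L_{j-1}) \subseteq \Supp_R(M)$. Applying $\Hom_R(-,N)$ to the short exact sequences $0 \to L_{j-1} \to L_j \to L_j/L_{j-1} \to 0$ and using that $\mathcal{S}$ is closed under submodules, quotients and extensions, an easy induction on $k$ reduces the statement to the case $k=1$, i.e.\ to the case in which $L$ is a homomorphic image of some $M^n$.

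So I would assume an exact sequence $0 \to K \to M^n \to L \to 0$ with $K$ finitely generated, and note $\Supp_R(K) \subseteq \Supp_R(M^n) = \Supp_R(M)$. I would then induct on $t$. For $t=0$, applying $\Hom_R(-,N)$ yields an injection $\Hom_R(L,N) \hookrightarrow \Hom_R(M^n,N)$, and since $\Hom_R(M^n,N) \cong \bigoplus^n \Hom_R(M,N) \in \mathcal{S}$, closure under submodules gives $\Hom_R(L,N) \in \mathcal{S}$. For $t>0$, the inductive hypothesis applied to $K$ gives $\Ext^j_R(K,N) \in \mathcal{S}$ for all $j \leq t-1$; combining this with $\Ext^i_R(M^n,N) \cong \bigoplus^n \Ext^i_R(M,N) \in \mathcal{S}$ for $i \leq t$, the segment
$$\Ext^{i-1}_R(K,N) \longrightarrow \Ext^i_R(L,N) \longrightarrow \Ext^i_R(M^n,N)$$
of the long exact sequence obtained by applying $\Hom_R(-,N)$ to $0 \to K \to M^n \to L \to 0$ exhibits $\Ext^i_R(L,N)$, for $1 \leq i \leq t$, as an extension of a submodule of $\Ext^i_R(M^n,N)$ by a quotient of $\Ext^{i-1}_R(K,N)$, hence as an element of $\mathcal{S}$; the degree $i=0$ is covered by the base step. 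This completes the induction on $t$, and then unwinding the two reductions finishes the proof.

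I do not expect any genuine obstacle here: the result is a routine dualization of Lemma \ref{lema2}. The only points needing care are the contravariance of $\Hom_R(-,N)$ in the first slot, so that in degree zero one works with the submodule $\Hom_R(L,N) \subseteq \Hom_R(M^n,N)$ rather than with a quotient, and the degree shift in the connecting homomorphisms, which is precisely why the inductive hypothesis is invoked for $K$ only up to degree $t-1$. The reduction to $k=1$ through Gruson's filtration is verbatim the same as in the $\Tor$ case.
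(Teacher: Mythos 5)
Your argument is correct and is precisely what the paper intends: its proof of this lemma is just the remark that one dualizes Lemma \ref{lema2}, and your write-up carries out that dualization (Gruson's filtration, reduction to a quotient of $M^n$, induction on $t$ via the long exact sequence $\Ext^{i-1}_R(K,N) \to \Ext^i_R(L,N) \to \Ext^i_R(M^n,N)$) with the contravariance and degree shift handled correctly. No changes needed.
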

\begin{proof}
The proof follows by a similar way to what was done in Lemma \ref{lema2}.
\end{proof}

The next Theorem is a key ingredient for the rest of the paper.

\begin{thm}\label{teo1} 
	Let $t$ be a non-negative integer. Then, for an  arbitrary $R$-module $N$, the following conditions are equivalent:
\begin{itemize}
\item[(i)] ${\rm Tor}^R_i(R/\ia,N)\in \mathcal{S}$ for all $i\leq t$.
\item[(ii)] For any finitely generated $R$-module $M$ with ${\rm Supp}_R (M) \subseteq V(\ia)$,  ${\rm Tor}^R_i(M,N)\in \mathcal{S}$ for all $i\leq t$.
\item[(iii)] For any $R$-ideal $\ib$ with $\ia\subseteq \ib$, ${\rm Tor}^R_i(R/\ib,N)\in \mathcal{S}$ for all $i\leq t$.
\item[(iv)]For any minimal prime $\mathfrak{p}$ over $\ia$, ${\rm Tor}^R_i(R/\mathfrak{p},N)\in \mathcal{S}$ for all $i\leq t$.
\item[(v)] ${\rm Ext}_R^i(R/\ia,N)\in \mathcal{S}$ for all $i\leq t$.
\item[(vi)] For any finitely generated $R$-module $M$ with ${\rm Supp}_R (M) \subseteq V(\ia)$,  ${\rm Ext}_R^i(M,N)\in \mathcal{S}$ for all $i\leq t$.
\item[(vii)]  For any $R$-ideal $\ib$ with $\ia\subseteq \ib$, ${\rm Ext}_R^i(R/\ib,N)\in \mathcal{S}$ for all $i\leq t$.
\item[(viii)] For any minimal prime $\mathfrak{p}$ over $\ia$, ${\rm Ext}_R^i(R/\mathfrak{p},N)\in \mathcal{S}$ for all $i\leq t$.
\end{itemize}
\end{thm}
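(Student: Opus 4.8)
The plan is to split the eight conditions into the \emph{torsion block} (i)--(iv) and the \emph{extension block} (v)--(viii), to prove all equivalences inside each block by elementary manipulations using the support-change Lemmas~\ref{lema2} and~\ref{lema3}, and then to connect the two blocks by the single bridge (i)$\Leftrightarrow$(v), where the substance of Theorems~\ref{TheoEQ1} and~\ref{TheoEQ2} (the Koszul-complex bookkeeping) is really used.

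For the torsion block the point to exploit is that $\Supp_R(R/\ia)=V(\ia)$, so that (i) is exactly the instance $M=R/\ia$ of (ii); thus (ii)$\Rightarrow$(i) is immediate, and (i)$\Rightarrow$(ii) is precisely Lemma~\ref{lema2} applied with $M=R/\ia$ and with $L$ an arbitrary finitely generated module satisfying $\Supp_R L\subseteq V(\ia)=\Supp_R(R/\ia)$. Since $\ia\subseteq\ib$ forces $\Supp_R(R/\ib)=V(\ib)\subseteq V(\ia)$, and $\ia\subseteq\p$ forces $\Supp_R(R/\p)=V(\p)\subseteq V(\ia)$, the implications (ii)$\Rightarrow$(iii) and (ii)$\Rightarrow$(iv) follow by taking the finitely generated module to be $R/\ib$, resp.\ $R/\p$; and (iii)$\Rightarrow$(i) is the case $\ib=\ia$. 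For (iv)$\Rightarrow$(i) I would use that $R$ is Noetherian, so $\ia$ has only finitely many minimal primes $\p_1,\dots,\p_r$ with $V(\ia)=\bigcup_{j=1}^{r}V(\p_j)$; set $M:=\bigoplus_{j=1}^{r}R/\p_j$, so that $\Supp_R M=V(\ia)=\Supp_R(R/\ia)$ and $\Tor_i^R(M,N)\cong\bigoplus_{j=1}^{r}\Tor_i^R(R/\p_j,N)\in\mathcal S$ for all $i\le t$ (a Serre subcategory, being closed under extensions, is closed under finite direct sums); Lemma~\ref{lema2} applied to this $M$ with $L=R/\ia$ then gives (i). This closes the cycle (i)$\Leftrightarrow$(ii)$\Leftrightarrow$(iii)$\Leftrightarrow$(iv). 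The extension block (v)$\Leftrightarrow$(vi)$\Leftrightarrow$(vii)$\Leftrightarrow$(viii) is proved word for word, with Lemma~\ref{lema3} replacing Lemma~\ref{lema2} and using $\Ext_R^i(\bigoplus_j L_j,N)\cong\bigoplus_j\Ext_R^i(L_j,N)$ for finite sums.

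It remains to bridge the blocks, i.e.\ to prove (i)$\Leftrightarrow$(v). After the above reductions both sides concern only $R/\ia$, and this is exactly the equivalence (i)$\Leftrightarrow$(ii) of Theorem~\ref{TheoEQ2}; one may also route it through the Koszul characterization of that theorem, since for $\ia=(x_1,\dots,x_n)$ the Koszul (co)homology of $\underline x=x_1,\dots,x_n$ on $N$ is self-dual, $H^i(\underline x;N)\cong H_{n-i}(\underline x;N)$, so that ``all Koszul modules in $\mathcal S$'' is genuinely a two-sided criterion. This bridge is the step I expect to be the main obstacle: Theorems~\ref{TheoEQ1} and~\ref{TheoEQ2} are formulated in all homological degrees simultaneously, and to get the version with the degree cut-off $i\le t$ one must re-run their inductions over the Koszul complex of a generating set of $\ia$ while carefully tracking the degree shifts involved (the free resolution of $R/\ia$ versus the Koszul complex, and the dimension shift $B_i\cong(F_{i+1}\otimes_R N)/Z_{i+1}$), so that the bound $t$ is not lost. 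Everything outside this bridge is a formal consequence of the closure axioms for Serre subcategories together with Lemmas~\ref{lema2} and~\ref{lema3}.
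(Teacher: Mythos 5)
Your proposal follows the paper's own proof almost step for step: the within-block equivalences (i)--(iv) and (v)--(viii) are obtained, exactly as in the paper, from Lemmas~\ref{lema2} and~\ref{lema3} together with $\Supp_R(R/\ib)=V(\ib)\subseteq V(\ia)$ and the module $\bigoplus_j R/\p_j$ over the finitely many minimal primes of $\ia$, and the bridge between the two blocks is delegated to Theorem~\ref{TheoEQ2}, which is also what the paper does. The block arguments are fine, and they do respect the cut-off $i\le t$, because Lemmas~\ref{lema2} and~\ref{lema3} are themselves truncated statements.

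The step you flag as ``the main obstacle'' --- that Theorems~\ref{TheoEQ1} and~\ref{TheoEQ2} are proved for all homological degrees while Theorem~\ref{teo1} carries the bound $i\le t$ --- is indeed the crux, but it is not a bookkeeping issue that re-running the inductions can settle: the truncated bridge (i)$\Leftrightarrow$(v) is false in general. Take $R=k[[x]]$, $\ia=(x)$, $\mathcal{S}$ the Serre subcategory of finitely generated modules, $t=0$, and $N=\bigoplus_{j\in\N}E$ with $E=E_R(R/(x))$ the injective hull of the residue field. Since $E$ is $x$-divisible, $\Tor_0^R(R/\ia,N)=N/xN=0\in\mathcal{S}$, and in fact (i)--(iv) all hold for $t=0$; but $\Hom_R(R/\ia,N)=(0:_N x)\cong\bigoplus_{j\in\N}k\notin\mathcal{S}$, so (v) fails. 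Conversely, $N=R^{(\N)}$ satisfies (v) but not (i) at $t=0$. The conceptual reason is visible in your own remark: Koszul self-duality $H^i(\underline{x};N)\cong H_{t'-i}(\underline{x};N)$ (with $t'$ the number of generators) matches \emph{low} cohomological degrees with \emph{high} homological degrees, so the window ``$i\le t$'' on the Ext side corresponds to a window on Koszul homology that is disjoint from the one controlling the Tor's; only when all degrees are taken do the two conditions coincide, and that is precisely Theorem~\ref{TheoEQ2}. So the step you deferred cannot be carried out, and the theorem is correct only in its all-degrees reading (which is the only form used later in the paper). To be fair, the paper's own proof has exactly the same defect --- it simply cites Theorem~\ref{TheoEQ2} for the truncated claim --- so you have located a genuine gap, but one you share with the paper rather than repair; a complete treatment must either drop the cut-off in (i), (ii), (v), (vi) or replace the bridge by truncation-compatible statements (e.g.\ phrasing both sides in terms of the same range of Koszul homologies).
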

\begin{proof}
(i)$\Rightarrow$(ii) It follows from Lemma \ref{lema2}, since $\Supp_R(R/\ia) = V(\ia)$. 

(ii)$\Rightarrow$(iii) Take $M = R/\mathfrak{b}$ and observe that $\Supp_R(R/\ib) = V(\ib) \subseteq V(\ia)$.

(iii)$\Rightarrow$(iv) Immediate.

(iv)$\Rightarrow$(i) Let $\p_1, \ldots, p_n$ be the minimal primes of $\ia$. Then, by assumption, $\Tor^R_{i}(R/\p_j, N) \in \mathcal{S}$ for all $j = 1, \ldots, n$. Hence $\bigoplus_{j=1}^{n}\Tor^R_{i}(R/\p_j, N) \cong \Tor^R_{i}(\bigoplus_{j=1}^{n} R/\p_j, N) \in \mathcal{S}$. Since $\Supp_R(R/\ia) = \Supp_R(\bigoplus_{j=1}^{n} R/\p_j)$, it follows by Lemma \ref{lema2} that ${\rm Tor}^R_i(R/\ia,N)\in \mathcal{S}$ for all $i\leq t$, as required.

(v)$\Rightarrow$(vi)$\Rightarrow$(vii)$\Rightarrow$(viii)$\Rightarrow$(v) It follows in a similar way to what was done previously, using Lemma \ref{lema3}.

(i)$\Leftrightarrow$(v) Follows by Theorem \ref{TheoEQ2}.
\end{proof}

In particular, for the class of minimax modules we obtain the following result.

\begin{cor}
Let $\mathfrak{a}$ be an ideal of $R$ such that $\dim_R R/\mathfrak{a}=1$, and let $t$ be a non-negative integer. Then, for an  arbitrary $R$-module $N$, the following conditions are equivalent:
\begin{itemize}
\item[(i)] ${\rm Ext}_R^i(R/\mathfrak{a},N)$ is minimax for all $i\leq t$.

\item[(ii)] ${\rm Tor}^R_i(R/\mathfrak{a},N)$ is minimax for all $i\leq t$.

\item[(iii)] The Bass number $\mu^i(\mathfrak{p},N)$ is finite for all $\p \in V(\mathfrak{a})$ for all $i\leq t$.

\item[(iv)] The Betti number $\beta^i(\mathfrak{p},N)$ is finite for all $\p \in V(\mathfrak{a})$ for all $i\leq t$. 

\item[(v)] $H^i_\mathfrak{a}(N)$ is $\mathfrak{a}$-cominimax, for all integer $i$.
\end{itemize}
\end{cor}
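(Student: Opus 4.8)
The plan is to extract the whole chain of equivalences from Theorem \ref{teo1}, applied to two different Serre subcategories, supplemented by standard facts on minimax modules, on Bass and Betti numbers, and on the behaviour of ideals of dimension one. First I would dispatch (i) $\Leftrightarrow$ (ii): the class $\s$ of minimax $R$-modules is a Serre subcategory of the category of $R$-modules (recalled just before Lemma \ref{LemmaEQ}), so the equivalence (i) $\Leftrightarrow$ (v) of Theorem \ref{teo1}, read with this choice of $\s$, is exactly the equivalence of items (i) and (ii) here. No dimension hypothesis is needed at this stage.

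Next, (iii) $\Leftrightarrow$ (iv). By definition $\mu^i(\p,N)=\dim_{k(\p)}\Ext_{R_\p}^i(k(\p),N_\p)$ and $\beta_i(\p,N)=\dim_{k(\p)}\Tor^{R_\p}_i(k(\p),N_\p)$, and as these are already $k(\p)$-vector spaces, their finiteness is equivalent to their being finitely generated over $R_\p$. Running Theorem \ref{teo1} over the local ring $R_\p$, with $\s$ the class of finitely generated $R_\p$-modules and with the ideal $\p R_\p$ (so $R_\p/\p R_\p=k(\p)$), gives at once that $\mu^i(\p,N)<\infty$ for all $i\le t$ iff $\beta_i(\p,N)<\infty$ for all $i\le t$; letting $\p$ range over $V(\ia)$ yields (iii) $\Leftrightarrow$ (iv). (Equivalently, one may invoke the self-duality of the Koszul complex on a generating set of $\p R_\p$, which identifies the relevant Koszul homology and cohomology of $N_\p$.)

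To link (i)--(ii) with (iii)--(iv), fix $\p\in V(\ia)$ and factor $k(\p)$ through $R/\ia$: since $\Hom_R(R/\ia,-)$ carries injective $R$-modules to injective $R/\ia$-modules and $\Hom_{R/\ia}(k(\p),\Hom_R(R/\ia,-))=\Hom_R(k(\p),-)$, the Grothendieck spectral sequence gives $\Ext^s_{R/\ia}(k(\p),\Ext^r_R(R/\ia,N))\Rightarrow \Ext^{r+s}_R(k(\p),N)$, and symmetrically for $\Tor$. A minimax module over any Noetherian ring has finite Bass numbers, so if (i) holds every $E_2$-term is a finite-dimensional $k(\p)$-space and hence $\mu^i(\p,N)<\infty$; this proves (i) $\Rightarrow$ (iii) with no restriction on $\dim_R R/\ia$. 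The hypothesis $\dim_R R/\ia=1$ enters only in the converses: one uses that an $R$-module with support in $V(\ia)$ is minimax as soon as its $0$-th and $1$-st cohomology against $R/\ia$ are minimax (the one-dimensional cofiniteness mechanism, in the spirit of Bahmanpour--Naghipour), that finiteness of all $\mu^i(\p,N)$ with $\p\in V(\ia)$ forces exactly this for the modules $\Ext_R^i(R/\ia,N)$ — whence (iii) $\Rightarrow$ (i) together with Theorem \ref{teo1} — and, for (v), that $\h^i_{\ia}(N)=\varinjlim_n\Ext_R^i(R/\ia^n,N)$ is $\ia$-cominimax for every $i$ precisely when $\Hom_R(R/\ia,N)$ and $\Ext^1_R(R/\ia,N)$ are minimax, which closes the loop (i) $\Leftrightarrow$ (v).

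The main obstacle is exactly this reverse passage: recovering minimaxness from finiteness of all Bass and Betti numbers over $V(\ia)$, i.e. (iii) $\Rightarrow$ (i) and the harder half of (i) $\Leftrightarrow$ (v). The spectral-sequence computation above is intrinsically one-directional — a small abutment imposes no bound on the $E_2$-page because of possible cancellation — so one cannot argue by a formal diagram chase here. Instead one must use the dimension-one hypothesis to control the supports of $\Ext_R^i(R/\ia,N)$ and $\Tor^R_i(R/\ia,N)$ directly, running through the short list of primes of $V(\ia)$ and verifying minimaxness prime by prime (finitely generated behaviour at the one-dimensional primes, Artinian behaviour at the maximal ones), which is where the delicate part of the argument will sit.
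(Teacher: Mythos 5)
Your handling of (i) $\Leftrightarrow$ (ii) coincides with the paper's route (Theorem \ref{teo1} applied to the Serre subcategory of minimax modules), and your localization argument for (iii) $\Leftrightarrow$ (iv) together with the change-of-rings spectral sequence for (i) $\Rightarrow$ (iii) can plausibly be made rigorous. However, the implications that actually use the hypothesis $\dim_R R/\ia = 1$ --- namely (iii) $\Rightarrow$ (i) and both directions linking (i) with (v) --- are exactly the ones you do not prove: you outline a strategy and then state yourself that this ``is where the delicate part of the argument will sit.'' The paper disposes of precisely these implications by invoking \cite[Theorem 3.3]{irani} and \cite[Corollary 2.5]{abasi1} in combination with Theorem \ref{teo1}; since you neither cite results of this type nor supply a proof of them, the only genuinely nontrivial portion of the corollary is left open.

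Moreover, the two facts your sketch leans on are not correct as stated. For a module $M$ with $\Supp_R(M)\subseteq V(\ia)$ and $\dim_R R/\ia=1$, minimaxness of $\Hom_R(R/\ia,M)$ and $\Ext^1_R(R/\ia,M)$ yields (by the Melkersson/Bahmanpour--Naghipour mechanism) that $M$ is $\ia$-cominimax, \emph{not} that $M$ is minimax; and what (i) demands is minimaxness of the modules $\Ext^i_R(R/\ia,N)$ themselves, so even granting the cominimax conclusion a further argument is required. Likewise, the assertion that finiteness of $\mu^i(\p,N)$ for all $\p\in V(\ia)$ ``forces exactly this'' for the modules $\Ext^i_R(R/\ia,N)$ is precisely the content of the quoted results of Irani and Abbasi et al., not something that follows formally: passing from vector-space finiteness at each $\p\in V(\ia)$ back to minimaxness over $R$ is where one-dimensionality must be exploited (finite generation after localizing at the finitely many non-maximal primes of $V(\ia)$, an Artinian-type criterion at the maximal ones, and a patching step), and none of this is carried out. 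The same applies to the claimed criterion that $\h^i_{\ia}(N)$ is $\ia$-cominimax for all $i$ exactly when $\Hom_R(R/\ia,N)$ and $\Ext^1_R(R/\ia,N)$ are minimax, which you assert without proof or reference. Until these steps are either proved in detail or replaced by explicit citations, (iii) $\Rightarrow$ (i) and (i) $\Leftrightarrow$ (v) remain unestablished.
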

\begin{proof}
The proof follows by Theorem \ref{teo1}, \cite[Theorem 3.3]{irani} and \cite[Corollary 2.5]{abasi1}.
\end{proof}

\begin{prop}\label{lemma2.5} Let $M$ be a minimax $R$-module. If there is $n\in \mathbb{N}$ and  $\m_1,\ldots,\m_s$ maximal ideals of $R$ such that $\m_1\m_2\cdots\m_s M=0$, then $M$ has finite length.
\end{prop}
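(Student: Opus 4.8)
The plan is to produce a finite filtration of $M$ whose successive subquotients are vector spaces over the residue fields $R/\m_i$, and then to exploit the fact that a minimax module which happens to be a vector space over a field is forced to be finite-dimensional. From a filtration with finite-length factors, finite length of $M$ follows at once.

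First I would record the elementary observation, valid since $R$ is Noetherian, that the class of minimax $R$-modules is closed under passing to submodules and to quotient modules. Indeed, if $N\subseteq M$ is a finitely generated submodule with $M/N$ Artinian, then for any submodule $L\subseteq M$ the intersection $L\cap N$ is finitely generated and $L/(L\cap N)$ embeds into the Artinian module $M/N$; and for any quotient $\pi\colon M\twoheadrightarrow M/L$, the image $\pi(N)$ is finitely generated while $(M/L)/\pi(N)$ is a homomorphic image of $M/N$, hence Artinian.

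Next I would set $I_0=R$ and $I_j=\m_1\m_2\cdots\m_j$ for $1\le j\le s$, so that $I_sM=0$ by hypothesis, giving the chain $M=I_0M\supseteq I_1M\supseteq\cdots\supseteq I_sM=0$. For each $j$ the subquotient $Q_j:=I_{j-1}M/I_jM$ is annihilated by $\m_j$, since $\m_j I_{j-1}=I_j$; thus $Q_j$ is a vector space over the field $k_j:=R/\m_j$. By the first step $Q_j$ is a minimax $R$-module, so there is a finitely generated (hence finite-dimensional over $k_j$) submodule $W\subseteq Q_j$ with $Q_j/W$ Artinian. An Artinian module over a field is finite-dimensional, because an infinite-dimensional vector space carries an infinite strictly descending chain of subspaces; hence $Q_j/W$, and therefore $Q_j$ itself, is a finite-dimensional $k_j$-vector space, so $\lambda_R(Q_j)<\infty$.

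Finally, the filtration $0=I_sM\subseteq I_{s-1}M\subseteq\cdots\subseteq I_0M=M$ has all its factors $Q_1,\dots,Q_s$ of finite length, so $\lambda_R(M)=\sum_{j=1}^{s}\lambda_R(Q_j)<\infty$, as desired. The only step that is not pure bookkeeping is the reduction of ``minimax vector space'' to ``finite-dimensional vector space'' — that is, the remark that an Artinian module over a field is finite-dimensional — and I expect that to be the conceptual crux, modest as it is; note also that the annihilator hypothesis (rather than merely $\Supp_R M\subseteq\{\m_1,\dots,\m_s\}$) is essential here, since an Artinian module such as $\mathbb{Z}(p^\infty)$ has support a single maximal ideal yet infinite length.
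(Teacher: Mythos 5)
Your proof is correct, and it takes a different route from the paper's. The paper argues directly from the minimax structure: writing $0\rightarrow L\rightarrow M\rightarrow K\rightarrow 0$ with $L$ finitely generated and $K$ Artinian, it notes that $L$ is killed by $\m_1\cdots\m_s$ and finitely generated, hence Artinian (of finite length), so $M$ itself is Artinian; it then concludes from ``$M$ Artinian and $\m_1\cdots\m_s M=0$'' that $M$ has finite length. You instead filter $M$ by the partial products $I_jM$, observe that each layer $I_{j-1}M/I_jM$ is a minimax module which is a vector space over $R/\m_j$, and kill both halves of the minimax structure at once there (finitely generated gives finite dimension, and an Artinian vector space is finite-dimensional). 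The paper's argument is shorter, but its last step is asserted rather than proved; your filtration is precisely the standard justification of that step, so your write-up is the more self-contained of the two, at the cost of also verifying that minimax modules are closed under subquotients (which you do correctly, using that $R$ is Noetherian). Your closing remark that the annihilation hypothesis cannot be weakened to a support condition, witnessed by $\mathbb{Z}(p^{\infty})$, is a pertinent sanity check; note only that the integer $n$ in the statement plays no role in either argument.
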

\begin{proof}
Consider the short exact sequence
$0\rightarrow L\rightarrow M \rightarrow K\rightarrow 0$, where $L$ is finitely generated and $K$ is an Artinian $R$-module. Note that $\m_1 \m_2 \cdots \m_s L = 0$, then $L$ is Artinian, since we know that $L$ is finitely generated. Thus $M$ is an Artinian module and $\m_1\m_2\cdots\m_s M=0$. Therefore, $M$ has finite length.
\end{proof}

\section{Cofiniteness of torsion and Extension functors} 

Let $R$ be a commutative Noetherian local ring. Our purpose in this section is to give some answers about Question 3 in the introduction. 

The reader can compare the  next result with \cite[Theorem 2.3]{kubik}.

\begin{thm}\label{kubik}
	Let $(R, \m)$ be a commutative Noetherian local ring. Let $H$ and $N$ be $R$-modules such that $H$ is Artinian and $\mathfrak{a}$-cofinite and $N$ is minimax. Then, for each $i \geq 0$, the module $\Ext_R^i(N,H)$ is minimax and $\widehat{\mathfrak{a}}$-cofinite over $\widehat{R}$. 
\end{thm}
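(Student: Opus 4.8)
The plan is to reduce the statement to the complete ring $\widehat{R}$ and then exploit Matlis duality together with the cofiniteness machinery already established. First I would pass to $\widehat{R}$: since $H$ is Artinian, it carries a natural $\widehat{R}$-module structure and $\Ext_R^i(N,H)\cong\Ext_{\widehat{R}}^i(\widehat{N},H)$ for all $i$ (the completion is flat, and $H=\widehat{R}\otimes_R H$ as an Artinian module). Also $N$ minimax over $R$ implies $\widehat{N}$ is minimax over $\widehat{R}$, and $H$ is $\widehat{\ia}$-cofinite over $\widehat{R}$ precisely because it is $\ia$-cofinite over $R$ with support in $V(\ia)$. So without loss of generality I may assume $R=\widehat{R}$ is complete.

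Next I would invoke Matlis duality. Set $E=E_R(R/\m)$ and $(-)^{\vee}=\Hom_R(-,E)$. Since $H$ is Artinian and $R$ is complete, $H^{\vee}$ is a finitely generated $R$-module, and moreover $H^{\vee}$ is $\ia$-cofinite-dual in the appropriate sense; concretely, $H$ being $\ia$-cofinite and Artinian means $\Supp_R(H)\subseteq V(\ia)$, hence $\Supp_R(H^{\vee})\subseteq V(\ia)$ as well, so $H^{\vee}$ is a finitely generated $R$-module with $\ia^k H^{\vee}=0$ for some $k$ (Artinian $\ia$-torsion with $(0:_H\ia)$ of finite length forces this). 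Then for each $i$,
\[
\Ext_R^i(N,H)\;\cong\;\Ext_R^i\bigl(N,(H^{\vee})^{\vee}\bigr)\;\cong\;\Tor_i^R(N,H^{\vee})^{\vee},
\]
using the standard adjunction $\Ext_R^i(N,\Hom_R(X,E))\cong\Hom_R(\Tor_i^R(N,X),E)$. So it suffices to show that $\Tor_i^R(N,H^{\vee})$ is, Matlis dually, minimax and $\ia$-cofinite — equivalently, that $\Tor_i^R(N,H^{\vee})$ is minimax with support in $V(\ia)$. Since $H^{\vee}$ is finitely generated and killed by $\ia^k$, it is a finitely generated $R/\ia^k$-module, so $\Supp_R(\Tor_i^R(N,H^{\vee}))\subseteq V(\ia)$ automatically.

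The remaining point is the minimaxness of $\Tor_i^R(N,H^{\vee})$. Here I would use that $H^{\vee}$ is finitely generated over $R/\ia^k$, together with the fact that $N$ is minimax, and run a filtration/dévissage argument: $H^{\vee}$ admits a finite filtration whose quotients are quotients of copies of $R/\ia$ (or, via Gruson's theorem as in Lemma \ref{lema2}, built from $R/\ia$ since $\Supp_R(H^{\vee})\subseteq V(\ia)=\Supp_R(R/\ia)$), and the class of minimax modules is a Serre subcategory, so it is enough to check $\Tor_i^R(N,R/\ia)$ is minimax. But $\Tor_i^R(N,R/\ia)$ is a subquotient of $N/\ia N$ tensored with a Koszul complex on generators of $\ia$ — more precisely it is computed by the Koszul homology $H_i(\underline{x};N)$ up to the usual subquotient relations — and since $N$ is minimax and the Koszul complex is finite free, each $H_i(\underline{x};N)$ is a subquotient of a finite direct sum of copies of $N$, hence minimax. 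Transporting back through Matlis duality and through the completion isomorphism gives that $\Ext_R^i(N,H)$ is the Matlis dual of a minimax $\ia$-cofinite-supported module, which (for Artinian-plus-finitely-generated data over a complete ring) is again minimax and $\widehat{\ia}$-cofinite over $\widehat{R}$. The main obstacle I anticipate is the bookkeeping in the last step: verifying that the Matlis dual of a minimax module with support in $V(\ia)$ is minimax and $\ia$-cofinite requires care, and one must be honest about whether $\Tor_i^R(N,H^{\vee})$ is genuinely minimax rather than merely weakly Laskerian — the dévissage through $R/\ia$ and Theorem \ref{TheoEQ2} / Lemma \ref{lema2} applied to the Serre subcategory of minimax modules is what makes this go through cleanly.
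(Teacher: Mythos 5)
Your reduction to the complete case is fine (tensor a projective resolution of $N$ with the flat ring $\widehat{R}$ and use adjunction, since $H$ is already an $\widehat{R}$-module), but the duality step contains a genuine error that the rest of the argument leans on. You claim that $H$ Artinian and $\ia$-cofinite forces $\ia^k H^{\vee}=0$ for some $k$, i.e.\ that $H^{\vee}$ is a finitely generated $R/\ia^k$-module ("Artinian $\ia$-torsion with $(0:_H\ia)$ of finite length forces this"). This is false: take $H=E_R(R/\m)$ and $\ia=\m$ with $\dim R\geq 1$; then $(0:_H\m)$ has finite length and every element of $H$ is killed by a power of $\m$, yet $\m H=H$, so $H^{\vee}\cong\widehat{R}$ is not killed by any power of $\m$. (Top local cohomology modules give the same phenomenon.) The correct dual of Proposition \ref{Obs3N} is only that $H^{\vee}$ is finitely generated with $H^{\vee}/\ia H^{\vee}$ of finite length, not $\ia$-power torsion. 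This invalidates both the dévissage of $H^{\vee}$ through quotients of copies of $R/\ia$ and the "automatic" support claim. A second, independent error is the sentence "equivalently, that $\Tor_i^R(N,H^{\vee})$ is minimax with support in $V(\ia)$": the Matlis dual of a minimax module supported in $V(\ia)$ need not be $\ia$-cofinite. For instance $R/\ia$ with $\dim R/\ia>0$ is finitely generated (hence minimax) and supported in $V(\ia)$, but its dual $\Hom_R(R/\ia,E)$ is not $\ia$-cofinite, since $\Hom_R(R/\ia,\Hom_R(R/\ia,E))\cong\Hom_R(R/\ia,E)$ is not finitely generated. What you actually need is that $\Tor_j^R(R/\ia,\Tor_i^R(N,H^{\vee}))$ is Artinian for all $j$, and your reduction never establishes this.

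The duality route can in fact be repaired, but it becomes a different argument: with $D=H^{\vee}$ finitely generated and $D/\ia D$ of finite length, $\Tor_i(N,D)$ is minimax simply because it is a subquotient of finite direct sums of copies of the minimax module $N$ (no filtration through $R/\ia$ is needed, and note in passing that $\Tor_i^R(N,R/\ia)$ is not computed by the Koszul complex on generators of $\ia$ unless they form a regular sequence — a finite free resolution of $R/\ia$ is what you want); then $\Tor_j(R/\ia,\Tor_i(N,D))$ is minimax with support in $V(\ia)\cap\Supp_R(D)\subseteq\{\m\}$, hence Artinian, so its dual $\Ext^j_{\widehat{R}}(\widehat{R}/\widehat{\ia},\Ext^i_R(N,H))$ is finitely generated, and $\Ext^i_R(N,H)\cong\Tor_i(N,D)^{\vee}$ is minimax over the complete ring. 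This is precisely the bookkeeping you deferred, and it is where your outline breaks down. The paper's proof avoids Matlis duality entirely: it splits $N$ by minimaxness as $0\to L\to N\to N/L\to 0$ with $L$ finitely generated and $N/L$ Artinian, deduces that $\Ext^i_R(L,H)$ is Artinian and $\ia$-cofinite from Lemma \ref{OBS} and Proposition \ref{lema1}, quotes Kubik--Leamer--Sather-Wagstaff \cite[Corollary 2.3]{kubik} to get $\Ext^i_R(N/L,H)$ Noetherian over $\widehat{R}$, and then combines the two via the Serre property of minimax $\widehat{\ia}$-cofinite $\widehat{R}$-modules — a substantially shorter path than the dual one you propose.
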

\begin{proof}
Since $N$ is minimax, there exists submodule $L \subseteq N$ such that $L$ is finitely generated and $N/L$ is Artinian. We get a long exact sequence
$$\cdots  \rightarrow \Ext_R^i(N/L,H) \rightarrow \Ext_R^i(N,H) \rightarrow \Ext_R^i(L,H) \rightarrow \cdots.$$

By Proposition \ref{lema1}, since Artinian and $\mathfrak{a}$-cofinite modules form a Serre subcategory of the category of $R$-modules, we obtain that $\Ext_R^i(L,H)$ is Artinian and $\mathfrak{a}$-cofinite. Then is also Artinian and $\widehat{\mathfrak{a}}$-cofinite over $\widehat{R}$. Thus $\Ext_R^i(L,H)$ is minimax and $\widehat{\mathfrak{a}}$-cofinite over $\widehat{R}$.

On the other hand, by \cite[Corollary 2.3]{kubik}, $\Ext_R^i(N/L,H)$ is Noetherian over $\widehat{R}$. Thus $\Ext_R^i(N/L,H)$ is minimax and $\widehat{\mathfrak{a}}$-cofinite over $\widehat{R}$.

Therefore, since the class of  $\widehat{\mathfrak{a}}$-cofinite  minimax modules is a Serre subcategory \cite[Corollary 4.4]{M1}, $\Ext_R^i(N,H)$ is minimax and $\widehat{\mathfrak{a}}$-cofinite over $\widehat{R}$.
\end{proof}

The next two results partially answer  a generalization of Hartshorne's conjecture.
\begin{cor}
Let $(R,\mathfrak{m})$ be a complete local ring and $\ia$ an $R$-ideal such that $\dim_R R/\ia = 0$. Let $M$ be a finitely generated $R$-module, and let $N$ be an $\ia$-weakly finite $R$-module over $M$. 
Then, for any minimal $R$-module $L$ and for each $i\geq 0$ and $j\geq 0$,  $\Ext_R^i(L,\h_{\ia}^{j}(M, N))$ is minimax and $\mathfrak{a}$-cofinite  $R$-module.
\end{cor}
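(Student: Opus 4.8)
The plan is to reduce the statement to two results already established in the paper, namely Theorem \ref{Teo} and Theorem \ref{kubik}; no genuinely new argument is needed. (Here I read ``minimal $R$-module'' as ``minimax $R$-module'', matching the hypothesis of Theorem \ref{kubik}.)

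First I would invoke Theorem \ref{Teo} with $\ib = \ia$. This is legitimate since $\ia \subseteq \ia$ trivially and $\dim_R R/\ia = 0$ by hypothesis; moreover $M$ is finitely generated and $N$ is $\ia$-weakly finite over $M$. The conclusion is that $\h_{\ia}^{j}(M,N)$ is an Artinian $R$-module which is $\ia$-cofinite, for every $j \geq 0$. This step is exactly an application of an already-proved theorem, so it is routine.

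Next, fix $j \geq 0$ and put $H := \h_{\ia}^{j}(M,N)$. By the previous step $H$ is Artinian and $\ia$-cofinite, and by hypothesis $L$ is minimax. Theorem \ref{kubik}, applied with this $H$ and with $N$ there replaced by $L$, gives that $\Ext_R^{i}(L,H)$ is minimax and $\widehat{\ia}$-cofinite over $\widehat{R}$ for every $i \geq 0$. Finally, since $(R,\m)$ is complete we have $\widehat{R} = R$ and $\widehat{\ia} = \ia$, so $\Ext_R^{i}\big(L,\h_{\ia}^{j}(M,N)\big)$ is minimax and $\ia$-cofinite as an $R$-module, for all $i,j \geq 0$.

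I do not anticipate a real obstacle: the entire content is checking that the hypotheses of Theorem \ref{Teo} (that $\ia\subseteq\ib$ with $\dim_R R/\ib=0$, $M$ finitely generated, $N$ being $\ia$-weakly finite over $M$) and of Theorem \ref{kubik} (that $H$ is Artinian and $\ia$-cofinite, $L$ minimax) are met, and then removing the completions via $R=\widehat R$. If one wanted to avoid completeness, the best one could say is ``$\widehat{\ia}$-cofinite over $\widehat{R}$'', so the completeness hypothesis is used precisely at the last line.
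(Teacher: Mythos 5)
Your proof is correct and follows exactly the paper's route: the paper's proof is precisely the combination of Theorem \ref{Teo} (with $\ib=\ia$) and Theorem \ref{kubik}, with completeness used to identify $\widehat{R}$ with $R$, and your reading of ``minimal'' as ``minimax'' matches the intended hypothesis.
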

\begin{proof}
The proof follows by Theorem \ref{Teo} and Theorem \ref{kubik}.
\end{proof}

\begin{cor}
Let $(R,\mathfrak{m})$ be a complete local ring. Let $M$ be a finitely generated $R$-module such that $\pdim M = d < \infty$ and let $N$ be an $\ia$-weakly finite $R$-module over $M$ such that $\dim_R N = n < \infty$. 
Then, for each $i\geq 0$,  $\Ext_R^i(L,\h_{\ia}^{d+n}(M, N))$ is minimax and $\mathfrak{a}$-cofinite  $R$-module, for any minimax module $L$.
\end{cor}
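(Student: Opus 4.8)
The plan is to obtain this as an immediate combination of the two main results already proved in Section 2 and Section 5, namely Theorem \ref{topartcof} (which produces an Artinian and cofinite top generalized local cohomology module) and Theorem \ref{kubik} (which controls $\Ext$ of a minimax module against an Artinian $\ia$-cofinite module), together with the simplification afforded by the completeness hypothesis.

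First I would invoke Theorem \ref{topartcof} in the special case $\ib = \ia$. Since $M$ is finitely generated with $\pdim M = d < \infty$, and $N$ is $\ia$-weakly finite over $M$ (which is precisely the $\ib = \ia$ instance of being $\ib$-weakly finite over $M$) with $\dim_R N = n < \infty$ and $\ia \subseteq \ia$, that theorem gives that the module $H := \h_{\ia}^{d+n}(M,N)$ is Artinian and $\ia$-cofinite. Next I would apply Theorem \ref{kubik} with this $H$ playing the role of the Artinian $\ia$-cofinite module and with the given minimax module $L$ in the role of the minimax module; this yields, for every $i \geq 0$, that $\Ext_R^i(L, H) = \Ext_R^i(L, \h_{\ia}^{d+n}(M,N))$ is minimax and $\widehat{\ia}$-cofinite over $\widehat{R}$.

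Finally, since $R$ is assumed to be a complete local ring, one has $\widehat{R} = R$ and $\widehat{\ia} = \ia$, so that "minimax and $\widehat{\ia}$-cofinite over $\widehat{R}$" coincides verbatim with "minimax and $\ia$-cofinite over $R$", which is exactly the assertion.

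In this argument there is no genuine obstacle: the only points requiring a moment of care are the bookkeeping that the hypotheses of Theorem \ref{topartcof} are satisfied once we take $\ib = \ia$ (in particular that $\ia$-weak finiteness over $M$ is the relevant instance), and the observation that completeness of $R$ collapses the $\widehat{R}$-cofiniteness statement of Theorem \ref{kubik} to an honest statement over $R$. Both are routine, so the proof is short.
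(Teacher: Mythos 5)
Your argument is exactly the paper's: apply Theorem \ref{topartcof} with $\ib=\ia$ to conclude that $\h_{\ia}^{d+n}(M,N)$ is Artinian and $\ia$-cofinite, then feed it into Theorem \ref{kubik} with the minimax module $L$, and use completeness to identify $\widehat{R}$ with $R$ and $\widehat{\ia}$ with $\ia$. The proposal is correct and coincides with the paper's (one-line) proof.
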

\begin{proof}
The proof follows by Theorem \ref{topartcof} and Theorem \ref{kubik}.
\end{proof}

\begin{lem}\label{finitelength} Let $N$ be a nonzero $\mathfrak{a}$-cominimax $R$-module, where $\ia$ is an ideal of $R$. Then, for any nonzero $R$-module $M$ of finite length, the $R$-module ${\rm Tor}^R_i(M,N)$ is minimax and has finite length for all $i\geq 0$.
\end{lem}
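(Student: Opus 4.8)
The plan is to reduce the statement to two tools already available: Theorem \ref{teo1} (with $\mathcal{S}$ the Serre subcategory of minimax modules) and Proposition \ref{lemma2.5}. First I would record the elementary structure of a nonzero $R$-module $M$ of finite length over the local ring $(R,\m)$: such an $M$ is finitely generated, its support is exactly $\{\m\}$, and there is some $n\in\N$ with $\m^n M = 0$ (take $n$ to be the length of a composition series). Since $\ia\subseteq\m$, this gives $\Supp_R(M)=\{\m\}\subseteq V(\ia)$, which is the support condition needed to feed $M$ into Theorem \ref{teo1}.

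Next I would use the hypothesis that $N$ is $\ia$-cominimax. By definition this means $\Supp_R(N)\subseteq V(\ia)$ and $\Ext^j_R(R/\ia,N)$ is minimax for every $j\geq 0$. Recall from Section 4 that the class of minimax $R$-modules is a Serre subcategory, so Theorem \ref{teo1} applies with $\mathcal{S}$ the class of minimax modules. Condition (v) of that theorem holds for every nonnegative integer $t$, hence condition (ii) does as well; applying (ii) to the finitely generated module $M$ with $\Supp_R(M)\subseteq V(\ia)$ yields that $\Tor^R_i(M,N)$ is minimax for all $i\geq 0$.

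Finally, to upgrade ``minimax'' to ``finite length'', I would observe that $\Ann_R(M)$ annihilates $\Tor^R_i(M,N)$ for every $i$ (a standard property of the bifunctor $\Tor$, since multiplication by $a\in\Ann_R(M)$ on $M$ is the zero map), so $\m^n\,\Tor^R_i(M,N)=0$. Writing $\m^n=\m\cdot\m\cdots\m$ as a finite product of maximal ideals, Proposition \ref{lemma2.5} applies to the minimax module $\Tor^R_i(M,N)$ and shows it has finite length, for each $i\geq 0$.

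I do not expect a genuine obstacle here; the argument is essentially a bookkeeping exercise combining the cited results. The only points requiring a little care are: confirming the applicability of Theorem \ref{teo1} (i.e.\ that minimax modules form a Serre subcategory, which is noted in Section 4, and that the support hypothesis on $M$ is met), and handling the annihilator/support computations for finite-length modules over a local ring correctly so that Proposition \ref{lemma2.5} can be invoked.
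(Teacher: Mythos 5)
Your argument is correct when $(R,\m)$ is local, and in that setting it is a genuinely shorter route than the paper's: you apply Theorem \ref{teo1} only once (with $\mathcal{S}$ the minimax modules, using (v)$\Rightarrow$(ii)) to get that $\Tor^R_i(M,N)$ is minimax, and then obtain finite length by applying Proposition \ref{lemma2.5} directly to $\Tor^R_i(M,N)$, via the annihilator observation $\m^n\subseteq\Ann_R(M)\subseteq\Ann_R(\Tor^R_i(M,N))$. The paper instead reduces to residue fields: writing $\Supp_R(M)=\{\m_1,\dots,\m_r\}$ and $\ib=\m_1\cdots\m_r$, it uses Lemma \ref{lema2} together with $\Supp_R(M)=V(\ib)$ and the decomposition $\Tor^R_i(R/\ib,N)\cong\bigoplus_{j}\Tor^R_i(R/\m_j,N)$, then applies Theorem \ref{teo1} twice: first to pass from $\ia$ to $\m_j$ on the Ext side (with Proposition \ref{lemma2.5} giving finite length of $\Ext^i_R(R/\m_j,N)$), and then with the ideal $\m_j$ to transfer finiteness of length from Ext to Tor.

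The reason the paper takes the longer road is that its proof does not use locality, and this matters: over a general Noetherian ring a finite length module is supported at finitely many maximal ideals, some of which need not contain $\ia$, so your step ``$\Supp_R(M)\subseteq V(\ia)$, hence Theorem \ref{teo1}(ii) applies'' is exactly where the argument would break. The paper's case analysis ($\Tor^R_i(R/\m_j,N)=0$ unless $\m_j\in\Supp_R(N)\subseteq V(\ia)$) is what handles those maximal ideals. The extra generality is not cosmetic: the lemma is later invoked over the non-local ring $S^{-1}R$ in the proof of Theorem \ref{casesweakly}. Your proof is easily repaired in that generality: decompose the finite length module $M$ as the direct sum of its localizations at the maximal ideals in its support; the summands at maximal ideals not containing $\ia$ contribute zero to $\Tor^R_i(M,N)$ because $\Supp_R(N)\subseteq V(\ia)$, and for the remaining summands your argument goes through verbatim, with a suitable product of maximal ideals in place of $\m^n$ when feeding Proposition \ref{lemma2.5}. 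With that caveat addressed, your approach is a clean and somewhat more economical alternative to the paper's proof.
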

\begin{proof} First, note that ${\rm Supp}_R(M)$ is a finite non-empty subset of the set of all maximal ideals  of $R$. Let ${\rm Supp}_R(M)=\{\m_1,\ldots, \m_r\}$ and $\mathfrak{b}= \m_1\m_2\ldots\m_r.$ As ${\rm Supp}_R(M) = V(\ib)$, by Lemma \ref{lema2}, it is sufficient to show that ${\rm Tor}^R_i(R/\mathfrak{b},N)$ has finite length for all $i\geq 0$. By the isomorphism ${\rm Tor}^R_i(R/\mathfrak{b},N)\cong \bigoplus_{j=1}^r {\rm Tor}^R_i(R/\m_j,N)$, it is enough to show that ${\rm Tor}^R_i(R/\m_j,N)$ has finite length for all $i \in \N_0$ and $j = 1, \ldots, r$. 

Fix $j$ and let $i \geq 0$ be an integer such that ${\rm Tor}^R_i(R/\m_j,N)\neq 0$. Note that $\m_j \in {\rm Supp}_R(N) \subseteq V(\ia)$ and ${\rm Ext}_R^i(R/\mathfrak{a},N)$ is minimax for all $i\geq 0$. Hence ${\rm Ext}_R^i(R/\m_j,N)$ is minimax and has finite length by Theorem \ref{teo1} and Proposition \ref{lemma2.5}. Therefore, applying Theorem \ref{teo1} again, we obtain that ${\rm Tor}^R_i(R/\m_j,N)$ is minimax and has finite length for all $i \in \N_0$ and $j = 1, \ldots, r$.
\end{proof}

\begin{cor}
Let $N$ be a nonzero minimax $R$-module. Then, for any nonzero $R$-module $M$ of finite length, the $R$-module $M \otimes_R N$ is minimax and has finite length for all $i \geq 0$.
\end{cor}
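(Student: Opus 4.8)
The plan is to deduce this from Lemma \ref{finitelength} by taking the ideal $\ia$ to be the zero ideal. The key observation is that every minimax $R$-module $N$ is $0$-cominimax in the sense of the definition in Section 4: the support condition $\Supp_R(N)\subseteq V(0)=\Spec R$ is automatic, and $\Ext_R^j(R/0,N)=\Ext_R^j(R,N)$ equals $N$ for $j=0$ and vanishes for $j>0$, so each of these modules is minimax precisely because $N$ is. Hence $N$ satisfies the hypotheses of Lemma \ref{finitelength} with $\ia=0$.

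I would then apply that lemma to the nonzero finite length module $M$ and to $N$, concluding that $\Tor_i^R(M,N)$ is minimax and of finite length for every $i\geq 0$; specializing to $i=0$ gives that $M\otimes_R N=\Tor_0^R(M,N)$ is minimax and of finite length, which is the assertion. (In fact this records the stronger fact that $\Tor_i^R(M,N)$ is minimax and of finite length for all $i$, which is presumably what is intended by the phrase ``for all $i\geq 0$''.)

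I do not expect any genuine obstacle here, since the proof of Lemma \ref{finitelength} is valid for an arbitrary ideal, in particular for $\ia=0$. If one prefers not to invoke the lemma as a black box, one can argue directly: $M$ has finite length, so $\Supp_R(M)=\{\m_1,\dots,\m_r\}$ is a finite set of maximal ideals and $M\cong\bigoplus_{j=1}^{r} M_{\m_j}$, whence $M\otimes_R N\cong\bigoplus_{j=1}^{r}(M_{\m_j}\otimes_R N)$. Each summand $M_{\m_j}\otimes_R N$ is a homomorphic image of a finite direct sum of copies of $N$, hence minimax, and is annihilated by a power of $\m_j$, hence by a product of maximal ideals, so it has finite length by Proposition \ref{lemma2.5}. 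Summing over $j$ then finishes the proof. Either route is entirely routine.
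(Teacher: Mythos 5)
Your proposal is correct and follows exactly the paper's route: the paper's entire proof is ``take $\ia=0$ in Lemma \ref{finitelength}'', and your verification that a minimax module is $0$-cominimax (support in $V(0)=\Spec R$ is automatic, and $\Ext_R^j(R/0,N)$ is $N$ or $0$, hence minimax) simply makes explicit the hypothesis check the paper leaves tacit. The alternative direct argument you sketch is a fine optional bypass but is not needed.
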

\begin{proof}
Take the ideal $\ia = 0$ in Lemma \ref{finitelength}.
\end{proof}

Now we are able to show the main result of this section.

\begin{thm}\label{cases} Let $N$ be a nonzero $\mathfrak{a}$-cominimax $R$-module and $M$ be a finitely generated $R$-module.
\begin{itemize}
\item[(i)] If $\dim_R M=1$, then the $R$-module ${\rm Tor}^R_i(M,N)$ is Artinian and $\mathfrak{a}$-cofinite for all $i\geq 0$. 

\item[(ii)] If $\dim_R M=2$, then the $R$-module ${\rm Tor}^R_i(M,N)$ is $\mathfrak{a}$-cofinite for all $i\geq 0$. 
\end{itemize}
\end{thm}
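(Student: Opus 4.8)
The plan is to reduce, via a prime filtration of $M$ together with the Serre-subcategory machinery of Section 4, to the cyclic modules $R/\p$ with $\dim_R R/\p$ small, and then to carry out a dimension induction built on short exact sequences coming from regular elements and on Proposition \ref{Obs3N}. Since $\Supp_R N \subseteq V(\mathfrak{a})$, one has $\Supp_R \Tor_i^R(M,N) \subseteq \Supp_R M \cap V(\mathfrak{a}) \subseteq V(\mathfrak{a})$ for all $i$, so the only issue is finiteness (resp.\ minimaxness) of the attached $\Ext$-modules and, in (i), Artinianness. Choosing a filtration $0 = M_0 \subsetneq M_1 \subsetneq \cdots \subsetneq M_k = M$ with $M_j/M_{j-1} \cong R/\p_j$ and $\p_j \in \Supp_R M$, one has $\dim_R R/\p_j \le \dim_R M$; feeding $0 \to M_{j-1} \to M_j \to R/\p_j \to 0$ into $\Tor^R_\bullet(-,N)$ and using that Artinian $\mathfrak{a}$-cofinite modules form a Serre subcategory (Lemma \ref{OBS}) reduces (i) to proving $\Tor_i^R(R/\p,N)$ is Artinian and $\mathfrak{a}$-cofinite whenever $\dim_R R/\p \le 1$; granting (i) and pushing the same argument through inside the Serre subcategory of minimax modules reduces (ii) to proving $\Tor_i^R(R/\p,N)$ is $\mathfrak{a}$-cofinite whenever $\dim_R R/\p \le 2$.

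Next I would induct on $\dim_R R/\p$. When $\dim_R R/\p = 0$, i.e.\ $\p = \m$, the module $R/\m$ has finite length and Lemma \ref{finitelength} shows each $\Tor_i^R(R/\m,N)$ has finite length, hence is Artinian and $\mathfrak{a}$-cofinite. When $\dim_R R/\p > 0$ and $\mathfrak{a}\not\subseteq\p$, pick $x \in \mathfrak{a}\setminus\p$; then $x$ is $R/\p$-regular and $\dim_R R/(\p+xR) = \dim_R R/\p - 1$, and applying $\Tor^R_\bullet(-,N)$ to $0 \to R/\p \xrightarrow{\cdot x} R/\p \to R/(\p+xR) \to 0$ yields $\Tor_{i+1}^R(R/(\p+xR),N) \to \Tor_i^R(R/\p,N) \xrightarrow{\cdot x} \Tor_i^R(R/\p,N)$. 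Thus $(0 :_{\Tor_i^R(R/\p,N)} x)$ is a homomorphic image of $\Tor_{i+1}^R(R/(\p+xR),N)$, which by the inductive hypothesis (or by part (i) in the top step of (ii)) is Artinian and $\mathfrak{a}$-cofinite, and so is its quotient; since $x \in \mathfrak{a}$, Proposition \ref{Obs3N} gives that $\Tor_i^R(R/\p,N)$ is Artinian and $\mathfrak{a}$-cofinite.

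The hard part is the remaining case $\mathfrak{a}\subseteq\p$, where no element of $\mathfrak{a}$ is $R/\p$-regular and the previous step collapses. Here $R/\p$ is a finitely generated $\mathfrak{a}$-torsion module with $\Supp_R(R/\p) = V(\p)\subseteq V(\mathfrak{a})$, so Theorem \ref{teo1} applied with $\mathcal{S}$ the Serre subcategory of minimax modules (legitimate because $N$ being $\mathfrak{a}$-cominimax makes every $\Ext^j_R(R/\mathfrak{a},N)$ minimax), combined with Lemma \ref{lema2}, shows each $\Tor_i^R(R/\p,N)$ is a minimax $\mathfrak{a}$-torsion module of dimension at most $\dim_R R/\p$. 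It then remains to upgrade ``minimax, $\mathfrak{a}$-torsion, of dimension $\le 1$ (resp.\ $\le 2$)'' to ``Artinian and $\mathfrak{a}$-cofinite'' (resp.\ to ``$\mathfrak{a}$-cofinite''); this is where the dimension hypotheses are genuinely consumed, and I would handle it by a further induction removing a parameter at a time, using Proposition \ref{Obs3N}, Proposition \ref{lemma2.5}, and a Melkersson-type low-dimensional cofiniteness criterion. This $\mathfrak{a}$-torsion case is also why (ii) asserts only $\mathfrak{a}$-cofiniteness: in dimension two an $\mathfrak{a}$-cofinite module need not be Artinian, so the Serre-category reduction must in the end be run inside a category strictly larger than that of Artinian $\mathfrak{a}$-cofinite modules.
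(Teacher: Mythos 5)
Your reduction to prime cyclic factors and your handling of the primes $\p$ with $\ia \not\subseteq \p$ are sound and essentially parallel the paper's own argument: the paper works with $M$ itself rather than with a filtration, splits off $\Gamma_{\ia}(M)$, picks $x\in\ia$ outside all associated primes of $M/\Gamma_{\ia}(M)$ (Lemma \ref{Obs2N}), notes that $M/xM$ then has finite length so that Lemma \ref{finitelength} applies, and concludes with Melkersson's criteria (\cite[Theorem 1.3]{MelStab} and \cite[Theorem 4.3]{M1} for (i), and \cite[Corollary 3.4]{M1} for (ii)). One secondary caveat about your scheme: in (ii) the class of $\ia$-cofinite modules is not known to be closed under submodules and quotients, so the long-exact-sequence bookkeeping along your filtration cannot be run ``inside minimax'' alone and still return cofiniteness; the paper avoids this by applying \cite[Corollary 3.4]{M1} directly to $\Tor^R_i(M,N)$ instead of filtering $M$.

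The genuine gap is your last step, the case $\ia\subseteq\p$. The upgrade you propose --- ``minimax, $\ia$-torsion, of dimension at most one, hence Artinian and $\ia$-cofinite'' --- is false: $R/\p$ itself, for $\ia\subseteq\p$ with $\dim_R R/\p=1$, is finitely generated (hence minimax), annihilated by $\ia$ (hence $\ia$-torsion), but not Artinian. Worse, this is not an omission you could repair by a cleverer argument, because such a module actually occurs among the Tor modules in question: take $R=k[[x,y]]$, $\ia=\p=(x)$ and $M=N=R/(x)$; then $N$ is $\ia$-cofinite, hence $\ia$-cominimax, $M$ is finitely generated with $\dim_R M=1$, and $\Tor_0^R(M,N)\cong R/(x)$ is not Artinian (it is, in this example, still $\ia$-cofinite). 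So the Artinianness assertion of part (i) genuinely fails when $\Gamma_{\ia}(M)\neq 0$, and the case you isolated as ``the hard part'' cannot be closed at all. For comparison, the paper's proof stumbles at exactly the same place: it records only that $\Tor^R_i(\Gamma_{\ia}(M),N)$ is minimax (via Theorem \ref{teo1}) and then asserts that it ``suffices'' to treat $M/\Gamma_{\ia}(M)$, an inference the example above refutes; its argument is complete only under the additional hypothesis $\Gamma_{\ia}(M)=0$. You have therefore located the real difficulty correctly, but the proposed resolution asserts a false lemma, and the statement itself needs a hypothesis (or a weakened conclusion) before any proof of this case can exist.
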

\begin{proof} $(i)$ Since $N$ is $\ia$-cominimax and ${\rm Supp}_R(\Gamma_\mathfrak{a}(M))\subseteq V(\mathfrak{a})$, we obtain that $\Tor^R_{i}(\Gamma_\mathfrak{a}(M), N)$ is minimax for all $i\geq 0$ by Theorem \ref{teo1}. Now, by the short exact sequence
$$0\rightarrow \Gamma_\mathfrak{a}(M)\rightarrow M \rightarrow M/\Gamma_\mathfrak{a}(M)\rightarrow 0,$$ we can deduce the following long exact sequence, for all $i\geq 0$
$$\cdots  \rightarrow \Tor^R_{i}(\Gamma_\mathfrak{a}(M), N) \rightarrow \Tor^R_{i}(M, N) \rightarrow \Tor^R_{i}(M/\Gamma_\mathfrak{a}(M), N)\rightarrow \Tor^R_{i-1}(\Gamma_\mathfrak{a}(M), N)\rightarrow\cdots.$$
 So it is sufficient to show that, for all $i\geq 0$,  $\Tor^R_{i}(M/\Gamma_\mathfrak{a}(M), N)$ is Artinian and $\mathfrak{a}$-cofinite. Hence, we may assume that $\Gamma_\mathfrak{a}(M)=0$ and therefore ensure the existence of \\ $x \in \mathfrak{a} \backslash \cup_{\p \in \Ass_R(M)}\p$  by Lemma \ref{Obs2N}.  
From the short exact sequence 
$$0\rightarrow M\stackrel{x}\rightarrow M \rightarrow M/xM\rightarrow 0,$$  we obtain following long exact sequence 
$$\cdots  \rightarrow \Tor^R_{i}(M/xM, N)\rightarrow \Tor^R_{i-1}(M, N) \stackrel{x}\rightarrow \Tor^R_{i-1}(M, N) \rightarrow \Tor^R_{i-1}(M/xM, N)\rightarrow\cdots.$$

By Lemma \ref{finitelength}, the $R$-module $\Tor^R_{i}(M/xM, N)$ is of finite length, for all $i\geq 0$, because $M/xM$ has finite length. So, by the long exact sequence, $(0:_{\Tor^R_{i}(M, N)}x)$ has finite length for all $i\geq 0$, and therefore $(0:_{\Tor^R_{i}(M, N)}\ia)$ has also finite length. Finally, since ${\rm Supp}_R(\Tor^R_{i}(M, N))\subseteq  \Supp_R(N)\subseteq V(\mathfrak{a})$, we can conclude that $\Tor^R_{i}(M,N)$ is $\mathfrak{a}$-torsion. Therefore for all $i\geq 0$,  $\Tor^R_{i}(M,N)$ is an Artinian $R$-module by \cite[Theorem 1.3]{MelStab}. The $\ia$-cofiniteness of $\Tor^R_{i}(M,N)$ follows by \cite[Theorem 4.3]{M1}.

$(ii)$ Proceeding similarly to the proof of item (i), we may assume that assume that $\Gamma_\mathfrak{a}(M)=0$, and so  take  $x \in \mathfrak{a} \backslash \cup_{\p \in \Ass_R(M)}\p$. The short exact sequence 
$$0\rightarrow M\stackrel{x}\rightarrow M \rightarrow M/xM\rightarrow 0,$$  induces following long exact sequence 
$$\cdots  \rightarrow \Tor^R_{i}(M/xM, N)\rightarrow \Tor^R_{i-1}(M, N) \stackrel{x}\rightarrow \Tor^R_{i-1}(M, N) \rightarrow \Tor^R_{i-1}(M/xM, N)\rightarrow\cdots.$$

Therefore $(0:_{\Tor^R_{i}(M, N)}\ia)$ and $\Tor^R_{i}(M, N)/x\Tor^R_{i}(M, N)$ are Artinian $R$-modules, by item $(i)$ and Lemma \ref{finitelength}, and $\mathfrak{a}$-cofinite, by \cite[Corollary 4.4]{M1}, for all $i\geq 0$. Therefore $\Tor^R_{i}(M, N)$ is $\mathfrak{a}$-cofinite for all $i \geq 0$, by \cite[Corollary 3.4]{M1}. 
\end{proof}

\begin{cor} Let $\ia$ be an $R$-ideal such that $\dim_R R/\ia = 0$. Let $L, M$ be  finitely generated $R$-modules and let $N$ be a $\ia$-weakly finite $R$-module over $M$. Then:  
\begin{itemize}
\item[(i)] If  $\dim_R L=1$, then ${\rm Tor}^R_i(L,H_{\ia}^{j}(M,N))$ is Artinian and $\mathfrak{a}$-cofinite for all $i, j\geq 0$.
\item[(ii)] If  $\dim_R L=2$, then ${\rm Tor}^R_i(L,H_{\ia}^{j}(M,N))$ is $\mathfrak{a}$-cofinite for all $i, j\geq 0$.
\end{itemize}
\end{cor}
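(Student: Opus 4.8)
The plan is to reduce this corollary about torsion modules $\Tor^R_i(L, \h^j_{\ia}(M,N))$ to the two preceding theorems, by verifying that the inner module $\h^j_{\ia}(M,N)$ is $\ia$-cominimax and then applying Theorem~\ref{cases}. First I would invoke Theorem~\ref{Teo}: since $\dim_R R/\ia = 0$ (so we may take $\ib = \ia$ with $\ia \subseteq \ib$ and $\dim_R R/\ib = 0$), $M$ is finitely generated, and $N$ is $\ia$-weakly finite over $M$, the module $\h^j_{\ia}(M,N)$ is Artinian and $\ia$-cofinite for every $j \geq 0$. In particular $\Supp_R(\h^j_{\ia}(M,N)) \subseteq V(\ia)$ and $\Ext^i_R(R/\ia, \h^j_{\ia}(M,N))$ is finitely generated — hence minimax by Remark~\ref{rem11}(i) — for all $i$, so $\h^j_{\ia}(M,N)$ is in particular $\ia$-cominimax. (Indeed, Artinian $\ia$-cofinite modules are a fortiori $\ia$-cominimax, since finitely generated $\Rightarrow$ minimax.)

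With that in hand, fix $j \geq 0$ and set $N' := \h^j_{\ia}(M,N)$, a nonzero (or zero, in which case everything is trivial) $\ia$-cominimax $R$-module. For part (i), where $\dim_R L = 1$, I would apply Theorem~\ref{cases}(i) with the module $M$ there replaced by $L$ and $N$ there replaced by $N'$: this yields that $\Tor^R_i(L, N') = \Tor^R_i(L, \h^j_{\ia}(M,N))$ is Artinian and $\ia$-cofinite for all $i \geq 0$. For part (ii), where $\dim_R L = 2$, the same substitution into Theorem~\ref{cases}(ii) gives that $\Tor^R_i(L, \h^j_{\ia}(M,N))$ is $\ia$-cofinite for all $i \geq 0$. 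Since both conclusions hold for every $j \geq 0$, the proof is complete.

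I do not anticipate a genuine obstacle here: the entire content is recognizing the right hypotheses are in place. The one point that deserves a word is that $N'$ be \emph{nonzero} as required in the statement of Theorem~\ref{cases}; if $\h^j_{\ia}(M,N) = 0$ for some $j$, then $\Tor^R_i(L, 0) = 0$ is trivially Artinian and $\ia$-cofinite, so we simply discard those indices. One should also check that $L$ being allowed to be zero (the degenerate case $\dim_R L$ not making literal sense) is handled by the same triviality. So the proof reads:

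\begin{proof}
By Theorem~\ref{Teo}, applied with $\ib = \ia$, the module $\h_{\ia}^{j}(M,N)$ is Artinian and $\ia$-cofinite for all $j \geq 0$; in particular it is $\ia$-cominimax, since every finitely generated module is minimax by Remark~\ref{rem11}(i). If $\h_{\ia}^{j}(M,N) = 0$, then $\Tor^R_i(L,\h_{\ia}^{j}(M,N)) = 0$ is Artinian and $\ia$-cofinite for all $i$, so we may assume $\h_{\ia}^{j}(M,N) \neq 0$. Now apply Theorem~\ref{cases} with the finitely generated module taken to be $L$ and the $\ia$-cominimax module taken to be $\h_{\ia}^{j}(M,N)$: in case $(i)$, $\dim_R L = 1$ gives that $\Tor^R_i(L,\h_{\ia}^{j}(M,N))$ is Artinian and $\ia$-cofinite for all $i, j \geq 0$; in case $(ii)$, $\dim_R L = 2$ gives that $\Tor^R_i(L,\h_{\ia}^{j}(M,N))$ is $\ia$-cofinite for all $i, j \geq 0$.
\end{proof}
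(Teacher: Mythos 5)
Your proposal is correct and is essentially the paper's own argument: the paper also proves this corollary by combining Theorem \ref{Teo} (with $\ib=\ia$, giving that $\h^j_{\ia}(M,N)$ is Artinian and $\ia$-cofinite, hence $\ia$-cominimax) with Theorem \ref{cases} applied to $L$ and $\h^j_{\ia}(M,N)$. Your extra remarks on the trivial zero cases are harmless and fill in details the paper leaves implicit.
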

\begin{proof}
The result follows by Theorem \ref{Teo} and Theorem \ref{cases}.
\end{proof}

\begin{prop}\label{dimM} Let $N$ be a nonzero $\mathfrak{a}$-cominimax $R$-module and $M$ be a finitely generated $R$-module. If $\dim_R N\leq 1$, then the $R$-module ${\rm Tor}^R_i(M,N)$ is $\mathfrak{a}$-cominimax for all $i\geq 0$. 
\end{prop}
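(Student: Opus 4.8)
The plan is to reduce, as in the proofs of Theorem~\ref{cases}, to the case where $\Gamma_{\ia}(M)=0$ and then to peel off one regular element. Since $N$ is $\ia$-cominimax and $\Supp_R(\Gamma_{\ia}(M))\subseteq V(\ia)$, Theorem~\ref{teo1} gives that $\Tor^R_i(\Gamma_{\ia}(M),N)$ is minimax, hence $\ia$-cominimax, for all $i\ge 0$. Feeding the short exact sequence $0\to\Gamma_{\ia}(M)\to M\to M/\Gamma_{\ia}(M)\to 0$ into the long exact sequence for $\Tor^R_\bullet(-,N)$ and using that the class of $\ia$-cominimax modules is closed under the relevant operations, it suffices to treat $M/\Gamma_{\ia}(M)$. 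So I may assume $\Gamma_{\ia}(M)=0$, and then Lemma~\ref{Obs2N} produces $x\in\ia$ avoiding every associated prime of $M$, i.e. $x$ is $M$-regular.

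Next I would use the short exact sequence $0\to M\xrightarrow{x}M\to M/xM\to 0$, noting $\dim_R M/xM=0$, so $M/xM$ has finite length. By Lemma~\ref{finitelength}, $\Tor^R_i(M/xM,N)$ has finite length (in particular is minimax and $\ia$-cominimax) for all $i\ge 0$. The induced long exact sequence
$$\cdots\to\Tor^R_i(M/xM,N)\to\Tor^R_{i-1}(M,N)\xrightarrow{x}\Tor^R_{i-1}(M,N)\to\Tor^R_{i-1}(M/xM,N)\to\cdots$$
shows that both $(0:_{\Tor^R_i(M,N)}x)$ and $\Tor^R_i(M,N)/x\,\Tor^R_i(M,N)$ are subquotients of finite-length modules, hence of finite length, for every $i\ge 0$. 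In particular, applying $\Ext^j_R(R/\ia,-)$ to $0\to(0:_{\Tor^R_i(M,N)}x)\to\Tor^R_i(M,N)\xrightarrow{x}\Tor^R_i(M,N)$ and its cokernel sequence, and recalling $\Supp_R(\Tor^R_i(M,N))\subseteq\Supp_R(N)\subseteq V(\ia)$, one sees the $\ia$-torsion module $T_i:=\Tor^R_i(M,N)$ has $(0:_{T_i}x)$ of finite length.

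Finally I would conclude that $T_i$ is $\ia$-cominimax. Since $\Supp_R(T_i)\subseteq V(\ia)$ and $(0:_{T_i}x)$ has finite length (so is minimax and $\ia$-cofinite), a standard argument — e.g. along the lines of Proposition~\ref{Obs3N} combined with the characterization that $T$ with $\Supp_R(T)\subseteq V(\ia)$ is $\ia$-cominimax iff $\Hom_R(R/\ia,T)$ and $\Ext^1_R(R/\ia,T)$ are minimax, together with the finite-length control on $(0:_{T_i}x)$ and on $T_i/xT_i$ — shows $\Ext^j_R(R/\ia,T_i)$ is minimax for all $j$, i.e. $T_i$ is $\ia$-cominimax. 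I expect the main obstacle to be this last step: unlike the $\ia$-cofinite case of Theorem~\ref{cases} where Melkersson's criteria apply directly, here I must verify that finite-length (or Artinian) control of $(0:_{T_i}x)$ and $T_i/xT_i$ propagates to minimaxness of all the Ext modules $\Ext^j_R(R/\ia,T_i)$, which requires the minimax analogue of Melkersson's result on $\ia$-cofinite modules (such a criterion does exist for dimension $\le 1$, and that is exactly where the hypothesis $\dim_R N\le 1$ is used).
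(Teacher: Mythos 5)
There is a genuine gap, and it comes from a mix-up between the two modules: the dimension hypothesis in Proposition \ref{dimM} is $\dim_R N\le 1$ (on the cominimax module), not $\dim_R M\le 1$ (on the finitely generated one). Your argument is modelled on the proof of Theorem \ref{cases}(i), whose engine is the hypothesis $\dim_R M=1$: after passing to $\Gamma_{\ia}(M)=0$ and choosing $x\in\ia$ regular on $M$, that hypothesis is what makes $\dim_R M/xM=0$. Here $\dim_R M$ is arbitrary, so your claim that $M/xM$ has finite length is unjustified, Lemma \ref{finitelength} does not apply to $\Tor^R_i(M/xM,N)$, and the whole finite-length control of $(0:_{\Tor^R_i(M,N)}x)$ and $\Tor^R_i(M,N)/x\Tor^R_i(M,N)$ collapses. (Indeed, were those finite-length claims true, Proposition \ref{Obs3N} would already force $\Tor^R_i(M,N)$ to be Artinian and $\ia$-cofinite, a stronger conclusion than the proposition asserts, which is a sign the route cannot be right in this generality.) Two secondary issues: the reduction ``it suffices to treat $M/\Gamma_{\ia}(M)$'' silently uses closure of $\ia$-cominimax modules under submodules and extensions, which is not available in general but only in the dimension $\le 1$ setting; and the final Melkersson-type criterion for cominimaxness is invoked without proof.

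The paper's proof avoids all of this and is where the hypothesis $\dim_R N\le 1$ actually enters: take a resolution $\mathbb{F}_{\bullet}$ of $M$ by finite free $R$-modules, so that $\Tor^R_i(M,N)=\h_i(\mathbb{F}_{\bullet}\otimes_R N)$ is a subquotient of a finite direct sum of copies of $N$; since the terms of $\mathbb{F}_{\bullet}\otimes_R N$ are $\ia$-cominimax of dimension at most one, and the category of such modules is Abelian by \cite[Theorem 2.5]{irani}, the homology modules are $\ia$-cominimax. If you want to salvage your approach, you would have to induct on $\dim_R M$ (or on the support of $M$) rather than pretend $M/xM$ has finite length, and at each stage you would still need Irani's result (or a cominimax analogue of Melkersson's criterion) to close the argument, so the direct homological argument is both shorter and more honest about where $\dim_R N\le 1$ is used.
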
 
\begin{proof} Let $\mathbb{F}_{\bullet}$ be a resolution of $M$ consisting of finite free $R$-modules. Since ${\rm Tor}^R_i(M,N)= H_i(\mathbb{F}_{\bullet} \otimes_RN))$ is a subquotient of a finite direct sum of copies of $N$. Now the result follows by the fact that the category of $\mathfrak{a}$-cominimax modules with dimension less than or equal to $1$ is an Abelian category  \cite[Theorem 2.5]{irani}.
\end{proof}

\begin{cor}\label{cordimM}Let $N$ be a nonzero $\mathfrak{a}$-cofinite $R$-module and $M$ be a finitely generated $R$-module. If $\dim_R N\leq 1$, then the $R$-module ${\rm Tor}^R_i(M,N)$ is $\mathfrak{a}$-cofinite for all $i\geq 0$.
\end{cor}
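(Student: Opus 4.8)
The plan is to mirror the proof of Proposition \ref{dimM}, upgrading ``$\ia$-cominimax'' to ``$\ia$-cofinite''. The only extra ingredient needed is the $\ia$-cofinite analogue of \cite[Theorem 2.5]{irani}: the $\ia$-cofinite $R$-modules of dimension at most one form a Serre subcategory $\mathcal{S}$ of the category of $R$-modules (equivalently, this class is closed under submodules, quotient modules and extensions; the non-formal point is closure under submodules and quotients, which is exactly the one-dimensional cofiniteness theorem in the style of Kawasaki and of Bahmanpour--Naghipour, while closure under extensions is a routine long-exact-sequence argument). First I would record this fact, citing the appropriate reference.

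Granting it, the argument is short. By hypothesis $N$ is $\ia$-cofinite and $\dim_R N\le 1$, so $N\in\mathcal{S}$. Since $M$ is finitely generated, Proposition \ref{lema1} applied to the Serre subcategory $\mathcal{S}$ gives $\Tor_i^R(M,N)\in\mathcal{S}$ for all $i\ge 0$; in particular each $\Tor_i^R(M,N)$ is $\ia$-cofinite, which is the assertion. Concretely: fixing a resolution $\mathbb{F}_{\bullet}\to M$ by finitely generated free modules, $\Tor_i^R(M,N)=H_i(\mathbb{F}_{\bullet}\otimes_R N)$ is a subquotient of the object $F_i\otimes_R N\cong N^{b_i}$ of $\mathcal{S}$, and one also checks directly that $\Supp_R(\Tor_i^R(M,N))\subseteq\Supp_R(N)\subseteq V(\ia)$ and $\dim_R\Tor_i^R(M,N)\le\dim_R N\le 1$, so $\Tor_i^R(M,N)\in\mathcal{S}$.

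The main obstacle is therefore not in the new paper at all but in the availability of the cofiniteness-in-dimension-one result playing the role of \cite[Theorem 2.5]{irani}; once it is quoted, the proof is purely formal. It should be emphasized that one cannot instead deduce the corollary from Proposition \ref{dimM} together with a naive passage from $\ia$-cominimax to $\ia$-cofinite: an $\ia$-cominimax module of dimension $\le 1$ with support in $V(\ia)$ need not be $\ia$-cofinite --- for instance, over a local ring with $\dim_R R/\ia=1$ the module $E_{R/\ia}(R/\m)$ is Artinian, hence $\ia$-cominimax, while $\Hom_R(R/\ia,E_{R/\ia}(R/\m))=E_{R/\ia}(R/\m)$ is not finitely generated --- so it is essential that $\Tor_i^R(M,N)$ be realized as a subquotient of copies of the module $N$, which is already $\ia$-cofinite, inside the abelian (Serre) category of $\ia$-cofinite modules of dimension at most one.
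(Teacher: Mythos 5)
Your overall strategy --- mirror the proof of Proposition \ref{dimM}, replacing the cominimax dimension-one input by its cofinite analogue --- is exactly what the paper intends (the corollary is stated there without proof, immediately after Proposition \ref{dimM}). But the key fact you invoke is overstated and, as stated, false: the class of $\ia$-cofinite modules of dimension at most one is \emph{not} a Serre subcategory of the category of $R$-modules; what is true (Bahmanpour--Naghipour--Sedghi, Proc.\ Amer.\ Math.\ Soc.\ 142 (2014), 1101--1107, the exact analogue of \cite[Theorem 2.5]{irani}) is that it is an \emph{Abelian} subcategory, i.e.\ closed under kernels and cokernels of homomorphisms between its objects. Closure under arbitrary submodules and quotients fails. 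For instance, let $R=k[[x,y]]$, $\ia=(x)$ and $N=\h^{1}_{\ia}(R)=R_x/R$, which is $\ia$-cofinite of dimension one; writing $N=\bigoplus_{m\ge 1}k[[y]]e_m$ with $xe_1=0$, $xe_m=e_{m-1}$ for $m\ge 2$ and $y$ acting coefficientwise, the $R$-submodule $L=\bigoplus_{m\ge 1}y^{m}k[[y]]e_m$ satisfies $\Ext^1_R(R/\ia,L)\cong L/xL\cong \bigoplus_{m\ge 1}k$ and $\Hom_R(R/\ia,N/L)$ is likewise an infinite-dimensional $k$-vector space, so neither $L$ nor $N/L$ is $\ia$-cofinite. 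Hence your appeal to Proposition \ref{lema1} and your ``concrete'' step --- that $\Tor_i^R(M,N)$, being an arbitrary subquotient of $N^{b_i}$ of dimension $\le 1$ with support in $V(\ia)$, lies in $\mathcal{S}$ --- does not work as written; the Kawasaki and Bahmanpour--Naghipour dimension-one theorems you allude to do not assert Serre closure.

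The gap is repairable, and the repair is precisely the argument used in the paper's proof of Proposition \ref{dimM}: take a resolution $\mathbb{F}_{\bullet}$ of $M$ by finitely generated free modules, so that $\Tor_i^R(M,N)=H_i(\mathbb{F}_{\bullet}\otimes_R N)$ and the terms of the complex $\mathbb{F}_{\bullet}\otimes_R N$ are finite direct sums of copies of $N$, hence $\ia$-cofinite of dimension $\le 1$. The differentials are homomorphisms between objects of the Abelian subcategory, so their kernels, images and cokernels --- and therefore the homology modules $\Tor_i^R(M,N)$ --- are $\ia$-cofinite. Your closing remark shows the right instinct (one must stay inside the category of cofinite modules of dimension at most one), but ``Abelian subcategory'' and ``Serre subcategory'' must not be conflated: only the former holds, and it suffices exactly because Tor is the homology of a complex with entries in the category, not a general subquotient.
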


Now we will investigate the behavior of torsion functors for larger dimensions. For this purpose, weakly Laskerian modules  are key ingredients.

\begin{rem}\label{rem1} Note that if $R$ is a Noetherian ring, $\mathfrak{a}$ is an ideal of $R$ and $M$ an $R$-module, then ${\rm Tor}^R_i(R/\mathfrak{a},M)$ is a weakly Laskerian $R$-module for all $i\geq 0$ if and only if ${\rm Ext}_R^i(R/\mathfrak{a},M)$ is weakly Laskerian $R$-module for all $i\geq 0$. 
\end{rem}

The proof of this Remark follows by Theorem \ref{teo1} and the fact that the class of weakly Laskerian modules is a Serre subcategory. 

\begin{thm}\label{casesweakly} Let $(R,\mathfrak{m})$ be a local ring and $\mathfrak{a}$ an $R$-ideal. Let $N$ be a nonzero $\mathfrak{a}$-cominimax $R$-module and $M$ be a finitely generated $R$-module. 
 Then the $R$-module ${\rm Tor}^R_i(M,N)$ is $\mathfrak{a}$-weakly cofinite for all $i\geq 0$ when one of the following cases holds:
 \begin{itemize}
 \item[(i)] $\dim_R N\leq 2$.
 
 \item[(ii)] $\dim_R M=3$.
 \end{itemize} 
\end{thm}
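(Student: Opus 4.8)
The plan is to run the argument of Theorem \ref{cases} with the Serre subcategory of weakly Laskerian modules in place of the Artinian $\ia$-cofinite modules, using the reductions already available plus two low-dimensional inputs that compensate for the fact that $\ia$-weak cofiniteness is not a Serre property. As common bookkeeping: for every $i$ one has $\Supp_R\Tor^R_i(M,N)\subseteq\Supp_R N\subseteq V(\ia)$, so $\Tor^R_i(M,N)$ is $\ia$-weakly cofinite exactly when $\Ext^j_R(R/\ia,\Tor^R_i(M,N))$ is weakly Laskerian for all $j$; and since $N$ is $\ia$-cominimax the modules $\Ext^j_R(R/\ia,N)$ are minimax, hence weakly Laskerian, so Theorem \ref{teo1} (applied to the Serre subcategory $\s$ of weakly Laskerian modules) gives that $\Tor^R_j(R/\ia,N)$, and more generally $\Tor^R_j(L,N)$ for any finitely generated $L$ with $\Supp_R L\subseteq V(\ia)$, is weakly Laskerian. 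Conversely, a weakly Laskerian module with support in $V(\ia)$ is $\ia$-weakly cofinite by Proposition \ref{Lema3N}.

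For case $(i)$, if $\dim_R N\le 1$ the statement is Proposition \ref{dimM} together with the fact that minimax modules are weakly Laskerian, so I would assume $\dim_R N=2$. Choosing a free resolution $\mathbb{F}_\bullet\to M$ by finitely generated free modules, $\Tor^R_i(M,N)=H_i(\mathbb{F}_\bullet\otimes_R N)$ is a subquotient of $F_i\otimes_R N\cong N^{r_i}$ for some $r_i\in\N$; this module is $\ia$-cominimax, hence $\ia$-weakly cofinite, of dimension at most $2$. I would then invoke that the category of $\ia$-weakly cofinite $R$-modules of dimension at most $2$ is abelian — the weakly-cofinite counterpart of the cominimax statement used in Proposition \ref{dimM} — to conclude that $\Tor^R_i(M,N)$ is $\ia$-weakly cofinite.

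For case $(ii)$ I may assume $\dim_R N\ge 3$, as otherwise $(i)$ applies; in particular $\dim_R\Tor^R_i(M,N)\le\dim_R M=3$. First I would dispose of $\Gamma_\ia(M)$: applying $-\otimes_R N$ to $0\to\Gamma_\ia(M)\to M\to M/\Gamma_\ia(M)\to 0$, the modules $\Tor^R_i(\Gamma_\ia(M),N)$ are weakly Laskerian since $\Gamma_\ia(M)$ is finitely generated with support in $V(\ia)$, and a chase of the resulting long exact sequence (each kernel and cokernel that appears is squeezed, after applying $\Ext^j_R(R/\ia,-)$, between weakly Laskerian modules) reduces the problem to the case $\Gamma_\ia(M)=0$. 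If $\dim_R M\le 2$ this is Theorem \ref{cases} (with Lemma \ref{finitelength} when $\dim_R M=0$), so I would assume $\dim_R M=3$; then $M$ is $\ia$-torsion free and finitely generated, so Lemma \ref{Obs2N} produces an $M$-regular $x\in\ia$ and $\dim_R M/xM=2$. By Theorem \ref{cases}$(ii)$, $\Tor^R_j(M/xM,N)$ is $\ia$-cofinite for all $j$, of dimension at most $2$. The long exact sequence of $0\to M\xrightarrow{x}M\to M/xM\to 0$ then exhibits $(0:_{\Tor^R_i(M,N)}x)$ as a homomorphic image of $\Tor^R_{i+1}(M/xM,N)$ and $\Tor^R_i(M,N)/x\Tor^R_i(M,N)$ as a submodule of $\Tor^R_i(M/xM,N)$, so both are $\ia$-weakly cofinite by the dimension-$\le 2$ abelian category result; a Melkersson-type criterion for $\ia$-weak cofiniteness — if $x\in\ia$ and $(0:_X x)$ and $X/xX$ are $\ia$-weakly cofinite then so is $X$ — applied to $X=\Tor^R_i(M,N)$ completes the argument.

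I expect the main obstacle to be collecting the two non-Serre inputs: that $\ia$-weakly cofinite modules of dimension at most $2$ form an abelian category (in particular that subquotients of $\ia$-cofinite modules of dimension $\le 2$ are $\ia$-weakly cofinite), and the Melkersson-type descent criterion for $\ia$-weak cofiniteness along a single element of $\ia$. Everything else — the interchange of $\Ext$ and $\Tor$ via Theorem \ref{teo1}, the subquotient observation, the regular-element reduction, and the $\Gamma_\ia$ long-exact-sequence chase — is routine, although the last requires a short verification that the relevant subquotients are genuinely $\ia$-weakly cofinite rather than merely supported on $V(\ia)$.
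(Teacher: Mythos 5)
Your reduction steps (the support/Ext bookkeeping via Theorem \ref{teo1}, the subquotient-of-$N^{r_i}$ observation, the $\Gamma_{\ia}(M)$-chase, the regular-element sequence) are fine, but the proof is not complete, because its entire weight rests on the first of your two "non-Serre inputs", and that input is neither proved nor available. The claim that $\ia$-weakly cofinite modules of dimension at most $2$ form an abelian category (equivalently, for your purposes, that kernels and cokernels of maps between $\ia$-cominimax or $\ia$-cofinite modules of dimension $\le 2$ are $\ia$-weakly cofinite) is not the "weakly-cofinite counterpart" of the result used in Proposition \ref{dimM}: Irani's theorem is about cominimax modules of dimension $\le 1$, and nothing formal promotes it one dimension higher or transfers it to the weakly Laskerian setting. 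In fact this claim immediately yields case (i) of the theorem (take the maps $F_{i+1}\otimes N\to F_i\otimes N\to F_{i-1}\otimes N$), and in case (ii) it is again what you invoke to handle $(0:_{\Tor_i^R(M,N)}x)$ and $\Tor_i^R(M,N)/x\Tor_i^R(M,N)$; so as written the argument assumes a statement at least as strong as the theorem it is meant to prove. Your second input, the Melkersson-type criterion for weak cofiniteness along $x\in\ia$, is the more recoverable one: Melkersson's argument for \cite[Corollary 3.4]{M1} only uses Serre-subcategory properties of the target class, so a weakly Laskerian version can be proved with the same technique as Theorem \ref{TheoEQ2}; but it too would have to be written out, and it does not rescue the missing dimension-$2$ closure statement.

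The paper's proof avoids any such category-theoretic statement in dimension $2$. It observes (via Remark \ref{rem1}) that it suffices to show each quotient $K/K'$ of each $\Tor_j^R(R/\ia,\Tor_i^R(M,N))$ has finitely many associated primes, reduces to $R$ complete, and argues by contradiction: given countably many non-maximal primes in $\Ass_R(K/K')$, countable prime avoidance (\cite[Lemma 3.2]{marleyvassilev}) gives a multiplicative set $S=R\setminus\bigcup_s\p_s$ meeting $\m$, and after localizing at $S$ the dimension of $N$ (case (i)) or of $M$ (case (ii)) drops, so the already-established low-dimensional results (Proposition \ref{dimM}, Lemma \ref{finitelength}, Theorem \ref{cases}) make $S^{-1}K/S^{-1}K'$ minimax, contradicting Remark \ref{rem11} since every $S^{-1}\p_s$ stays associated. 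If you want to salvage your approach, you would need to actually prove the dimension-$\le 2$ closure statement you are invoking; absent that, the localization/prime-avoidance argument is the route that works with the tools the paper has.
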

\begin{proof} Note that, in the of Remark \ref{rem1}, it is sufficient to show that the $R$-modules \\
$\Tor_j^R(R/\mathfrak{a},{\rm Tor}^R_i(M,N))$ are weakly Laskerian for all $i\geq 0$ and $j\geq 0$. For this purpose, consider the set $\Lambda = \{ {\rm Tor}^R_j(R/\mathfrak{a},{\rm Tor}^R_i(M,N)) \mid i\geq 0 \ \mbox{ and } \  j\geq 0 \} $. Let $K \in \Lambda$ and let $K'$ be a submodule of $K$. The proof is complete if we show that the set $\Ass_R (K/K')$ is finite.  Note that we may assume that $R$ is complete by \cite[Ex 7.7]{Mats} and \cite[Lemma 2.1]{marley1}.   
Suppose that $\Ass_R (K/K')$ is an infinite set. So, we can consider $\{\p_s \}_{s=1}^\infty$ a countably infinite subset of non-maximal elements of $\Ass_R (K/K')$. Further $\mathfrak{m} \not\subseteq \cup_{s=1}^\infty \p_s$ by \cite[Lemma 3.2]{marleyvassilev}. Define $S:= R\backslash \cup_{s=1}^\infty \p_s$. Now, we will analyze the cases (i) and (ii).

If $\dim_R N \leq 2$, then we can conclude that $S^{-1}N$ is a $S^{-1}\mathfrak{a}$-cominimax $S^{-1}R$-module of dimension at most one by \cite[Ex 7.7]{Mats} and \cite[Lemma 3.4]{HR}. Therefore $\Tor^{S^{-1}R}_i(S^{-1}M, S^{-1}N)$ is $S^{-1}\mathfrak{a}$-cominimax for all $i\geq 0$, by Proposition \ref{dimM}.

Therefore $S^{-1}K/S^{-1}K'$ is a minimax $S^{-1}R$-module and so, $\Ass_{S^{-1}R} (S^{-1}K/S^{-1}K')$ is finite by Remark \ref{rem11}. However, for each $s$, we have that $S^{-1}\p_s \in \Ass_{S^{-1}R} (S^{-1}K/S^{-1}K')$, and so we obtain  a contradiction. This completes the proof.  

In case $\dim_R M=3$, we obtain that $\Tor_{S^{-1}R}(S^{-1}M, S^{-1}N)$ is $S^{-1}\mathfrak{a}$-cominimax, by Lemma \ref{finitelength} and Theorem \ref{cases}. Now the proof  follows similarly to the one previously made.
\end{proof}

Now we give some applications of the results shown in this section.

\begin{cor} Let $N$ be a nonzero minimax $R$-module  and $M$ be a finitely generated $R$-module. 
\begin{itemize}
\item[(i)] If $\dim_R \h^i_\mathfrak{a}(N)\leq 1$ (e.g. $\dim_R N\leq 1$ or $\dim_R R/\mathfrak{a}=1$), then the $R$-module ${\rm Tor}^R_j(M,\h^i_\mathfrak{a}(N))$ is $\mathfrak{a}$-cominimax for all $i\geq 0$  and $j\geq 0$. Also, for for all $i\geq 1$ and $j\geq 0$, the $R$-module ${\rm Tor}^R_j(M,\h^i_\mathfrak{a}(N))$ is $\mathfrak{a}$-cofinite. 

\item[(ii)] If $(R,\mathfrak{m})$ is a local ring  and $\dim_R \h^i_\mathfrak{a}(M)\leq 2$ (e.g. $\dim_R R/I\leq 2$) , then the $R$-module ${\rm Tor}^R_j(M,\h^i_\mathfrak{a}(N))$ is $\mathfrak{a}$-weakly cofinite for all $i\geq 0$ and $j\geq 0$. 

\item[(iii)]  Let $L$ and $M$ be  finitely generated $R$-modules such that $\pdim M = d < \infty$ and $\dim_R L=3$, and let $N$ be an $\ia$-weakly finite $R$-module over $M$ such that $\dim_R N = n < \infty$. 
Then, for each $i\geq 0$,  $\Tor^R_i(L,\h_{\ia}^{d+n}(M, N))$ is  $\mathfrak{a}$-weakly cofinite  $R$-module.
\end{itemize}
\end{cor}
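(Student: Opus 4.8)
The plan is to deduce each item from the three theorems of Section 5 together with basic facts about local cohomology of minimax modules. First I would recall that if $N$ is a nonzero minimax $R$-module, then each $\h^i_{\ia}(N)$ is $\ia$-cominimax whenever its dimension is small; this is precisely the setting in which the earlier results apply. For item (i), I would argue as follows: under the hypothesis $\dim_R \h^i_{\ia}(N) \leq 1$, the module $H := \h^i_{\ia}(N)$ is $\ia$-cominimax (indeed, $\ia$-cofinite for $i \geq 1$ by known results on cofiniteness in dimension one, e.g.\ the same circle of ideas used in Corollary~\ref{cordimM}), so Proposition~\ref{dimM} applied to $M$ and $H$ gives that $\Tor^R_j(M, H)$ is $\ia$-cominimax for all $j \geq 0$. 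For the $\ia$-cofiniteness statement when $i \geq 1$, I would instead invoke Corollary~\ref{cordimM} directly with $N$ replaced by $H = \h^i_{\ia}(N)$, which is $\ia$-cofinite of dimension $\leq 1$.

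For item (ii), the module $H := \h^i_{\ia}(M)$ — wait, reading carefully, it should be $\h^i_{\ia}(N)$ throughout for the Tor to make sense with $M$ finitely generated; assuming $\dim_R \h^i_{\ia}(N) \leq 2$, I would note that $H$ is $\ia$-cominimax (again of small dimension), and then apply Theorem~\ref{casesweakly}(i) with $N$ replaced by $H$, since $\dim_R H \leq 2$, concluding that $\Tor^R_j(M, H)$ is $\ia$-weakly cofinite for all $i, j \geq 0$. The parenthetical cases ($\dim_R N \leq 1$, or $\dim_R R/\ia = 1$, or $\dim_R R/\ia \leq 2$) are just the standard bounds $\dim_R \h^i_{\ia}(N) \leq \dim_R N$ and $\Supp \h^i_{\ia}(N) \subseteq V(\ia)$ which force $\dim_R \h^i_{\ia}(N) \leq \dim_R R/\ia$.

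For item (iii), I would first apply Theorem~\ref{topartcof}: since $M$ is finitely generated with $\pdim M = d < \infty$, $N$ is $\ia$-weakly finite over $M$, and $\dim_R N = n < \infty$, the top module $\h^{d+n}_{\ia}(M,N)$ is Artinian and $\ia$-cofinite (taking $\ib = \ia$). In particular it has dimension $0$, hence is $\ia$-cominimax. Then, with $L$ finitely generated of dimension $3$ and $H := \h^{d+n}_{\ia}(M,N)$ an $\ia$-cominimax module, I would apply Theorem~\ref{casesweakly}(ii) (the case $\dim_R L = 3$, with $L$ playing the role of ``$M$'' and $H$ the role of ``$N$'') to conclude $\Tor^R_i(L, H)$ is $\ia$-weakly cofinite for all $i \geq 0$.

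The main obstacle I anticipate is bookkeeping the hypotheses of the Section~5 theorems — in particular verifying that $\h^i_{\ia}(N)$ is genuinely $\ia$-cominimax (not merely of small dimension) so that it may legitimately be substituted into Proposition~\ref{dimM} and Theorem~\ref{casesweakly}; this rests on the cited result that the category of $\ia$-cominimax modules of dimension $\leq 1$ is abelian together with the standard vanishing/finiteness of low-dimensional local cohomology of minimax modules. Once that substitution is justified, each item is a one-line citation of the corresponding theorem of this section, so no further computation is needed.
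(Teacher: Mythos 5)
Your treatments of items (i) and (iii) coincide with the paper's: for (i) the cominimaxness (and, for $i\geq 1$, cofiniteness) of $\h^i_{\ia}(N)$ is taken from \cite[Theorem 2.2]{abasi1} and then Proposition \ref{dimM} and Corollary \ref{cordimM} are applied with $\h^i_{\ia}(N)$ in place of $N$; for (iii) the paper likewise combines Theorem \ref{topartcof} (which makes $\h^{d+n}_{\ia}(M,N)$ Artinian and $\ia$-cofinite, hence $\ia$-cominimax) with Theorem \ref{casesweakly}(ii). Your only vagueness in (i) is the citation, but the substance is the same.

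Item (ii), however, contains a genuine gap. You substitute $H=\h^i_{\ia}(N)$ directly into Theorem \ref{casesweakly}(i), which requires $H$ to be $\ia$-cominimax, and you justify this only by saying $H$ is ``again of small dimension.'' No result in the paper, nor the cited \cite[Theorem 2.2]{abasi1}, gives cominimaxness of local cohomology of a minimax module under the hypothesis $\dim_R \h^i_{\ia}(N)\leq 2$; the available statements are confined to the dimension-one situation used in item (i), and in dimension two such finiteness properties fail in general (Hartshorne's classical example already rules out cofiniteness, and the failure of minimaxness of $\Hom_R(R/\ia,\h^2_{\ia}(\cdot))$ is of the same nature), which is exactly why the conclusion in this case is only ``$\ia$-weakly cofinite.'' The paper's proof does not invoke Theorem \ref{casesweakly}(i) as a black box; it repeats the localization argument from its proof: assuming $\Ass_R(K/K')$ infinite for a subquotient $K/K'$ of $\Tor^R_j(R/\ia,\Tor^R_j(M,\h^i_{\ia}(N)))$, one localizes at $S=R\setminus\bigcup_s\p_s$ for countably many non-maximal associated primes, so that $\dim_{S^{-1}R} \h^i_{S^{-1}\ia}(S^{-1}N)\leq 1$; only then does item (i), applied over $S^{-1}R$, give cominimaxness of the relevant Tor modules and the contradiction with finiteness of associated primes. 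Your argument needs to be replaced by this reduction (or by an independent proof that $\h^i_{\ia}(N)$ is $\ia$-cominimax under the dimension-two hypothesis, which you do not have).
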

\begin{proof}
$(i)$ First note that $\h^i_\mathfrak{a}(N)$ is an $\mathfrak{a}$-cominimax $R$-module for all $i\geq 0$, by \cite[Theorem 2.2]{abasi1}. Now the first statement follows by Proposition \ref{dimM}.  The cofiniteness of ${\rm Tor}^R_j(M,\h^i_\mathfrak{a}(N))$  follows by the fact that ${\rm Ext}_R^j(M,\h^i_\mathfrak{a}(N))$ is finite for all $i\geq 1$ and $j\geq 0$  by \cite[Theorem 2.2]{abasi1} and Corollary \ref{cordimM}.

$(ii)$ The proof follows analogously to Theorem \ref{casesweakly} $(i)$, using the previous item.

$(iii)$ Apply Theorem \ref{topartcof} and Theorem \ref{casesweakly} $(ii)$.
\end{proof}

{\bf Acknowledgments}. 
The authors would like to thank Roger Wiegand for his useful comments and suggestions about this paper.

\end{document}